\newtheoremstyle{ejpecpbodyit}
{3pt}
{3pt}
{\itshape}
{}
{\bfseries\sffamily}
{.}
{ }
{}
\theoremstyle{ejpecpbodyit}
\newtheorem{theoremOwn}{Theorem}%
\newtheorem{proposition}{Proposition}[section]%
\newtheorem{lemma}[proposition]{Lemma}%
\newtheorem{definition}[proposition]{Definition}%
\newtheorem{notation}[proposition]{Notation}%
\newtheoremstyle{ejpecpbodyrm}
{3pt}
{3pt}
{}
{}
{\bfseries\sffamily}
{.}
{ }
{}
\theoremstyle{ejpecpbodyrm}%
\newtheorem{example}[proposition]{Example}%
\newtheorem{remark}[proposition]{Remark}%
\newtheoremstyle{step}{3pt}{0pt}{}{}{\bf}{}{.5em}{}
\theoremstyle{step} 
\renewcommand{\section}{\@startsection%
  {section}
  {1}
  {0em}
  {\baselineskip}
  {0.5\baselineskip}
  {\normalfont\large\bfseries}}
\renewcommand{\subsection}{\@startsection%
  {subsection}
  {2}
  {0em}
  {\baselineskip}
  {0.25\baselineskip}
  {\normalfont\bfseries}
}
\DeclareMathAlphabet{\mathpzc}{OT1}{pzc}{m}{it}
\newcommand{\suml}{\sum\limits}
\newcommand{\ind}[1]{\mathbbm{1}_{\{#1\}}} 
\numberwithin{equation}{section}
\newcommand{\nul}{
  \parbox[b]{1cm}{
    \beginpicture
    \setcoordinatesystem units <.5cm,.5cm>
    \setplotarea x from 0 to 1.5, y from 0 to 0.5
    \put{$\bullet$} [cC] at 0.5 0.2
    \endpicture}}
\newcommand{\eins}{
  \parbox[b]{1cm}{
    \beginpicture
    \setcoordinatesystem units <.5cm,.5cm>
    \setplotarea x from 0 to 1.5, y from 0 to 0.5
    \circulararc -180 degrees from 0 0 center at 0.5 0
    \endpicture}}
\newcommand{\zwei}{
  \parbox[b]{1cm}{
    \beginpicture
    \setcoordinatesystem units <.5cm,.5cm>
    \setplotarea x from 0 to 1.5, y from 0 to 0.5
    \circulararc -180 degrees from 0 0 center at 0.5 0
    \ellipticalarc axes ratio 1:1.5 -180 degrees from 0 0 center at 0.5 0
    \endpicture}}
\newcommand{\drei}{
  \parbox[b]{1cm}{
    \beginpicture
    \setcoordinatesystem units <.5cm,.5cm>
    \setplotarea x from 0 to 2.5, y from 0 to 0.5
    \circulararc -180 degrees from 0 0 center at 0.5 0
    \circulararc -180 degrees from 1 0 center at 1.5 0
    \endpicture}}
\newcommand{\vier}{
  \parbox[b]{1cm}{
    \beginpicture
    \setcoordinatesystem units <.5cm,.5cm>
    \setplotarea x from 0 to 3, y from 0 to 0.5
    \circulararc -180 degrees from 0 0 center at 0.5 0
    \circulararc -180 degrees from 1.5 0 center at 2 0
    \endpicture}}
\newcommand{\fuenf}{
  \parbox[b]{1cm}{
    \beginpicture
    \setcoordinatesystem units <.5cm,.5cm>
    \setplotarea x from 0 to 1.5, y from 0 to 1
    \circulararc -180 degrees from 0 0 center at 0.5 0
    \ellipticalarc axes ratio 1:1.5 -180 degrees from 0 0 center at 0.5 0
    \ellipticalarc axes ratio 1:2 -180 degrees from 0 0 center at 0.5 0
    \endpicture}}
\newcommand{\sechs}{
  \parbox[b]{1cm}{
    \beginpicture
    \setcoordinatesystem units <.5cm,.5cm>
    \setplotarea x from 0 to 2.5, y from 0 to 0.5
    \circulararc -180 degrees from 0 0 center at 0.5 0
    \ellipticalarc axes ratio 1:1.5 -180 degrees from 0 0 center at 0.5 0
    \circulararc -180 degrees from 1 0 center at 1.5 0
    \endpicture}}
\newcommand{\sieben}{
  \parbox[b]{1cm}{
    \beginpicture
    \setcoordinatesystem units <.5cm,.5cm>
    \setplotarea x from 0 to 2.5, y from 0 to 1
    \circulararc -180 degrees from 0 0 center at 0.5 0
    \circulararc -180 degrees from 1 0 center at 1.5 0
    \ellipticalarc axes ratio 1.5:1 -180 degrees from 0 0 center at 1 0
    \endpicture}}
\newcommand{\acht}{
  \parbox[b]{1cm}{
    \beginpicture
    \setcoordinatesystem units <.5cm,.5cm>
    \setplotarea x from 0 to 3, y from 0 to 0.5
    \circulararc -180 degrees from 0 0 center at 0.5 0
    \circulararc -180 degrees from 1.5 0 center at 2 0
    \ellipticalarc axes ratio 1:1.5 -180 degrees from 0 0 center at 0.5 0
    \endpicture}}
\newcommand{\neun}{
  \parbox[b]{1cm}{
    \beginpicture
    \setcoordinatesystem units <.5cm,.5cm>
    \setplotarea x from 0 to 3.5, y from 0 to 1
    \circulararc -180 degrees from 0 0 center at 0.5 0
    \circulararc -180 degrees from 1 0 center at 1.5 0
    \ellipticalarc axes ratio 1.5:1 -180 degrees from 1 0 center at 2 0
    \endpicture}}
\newcommand{\einsnull}{
  \parbox[b]{1cm}{
    \beginpicture
    \setcoordinatesystem units <.5cm,.5cm>
    \setplotarea x from 0 to 3.5, y from 0 to 0.5
    \circulararc -180 degrees from 0 0 center at 0.5 0
    \circulararc -180 degrees from 1 0 center at 1.5 0
    \circulararc -180 degrees from 2 0 center at 2.5 0
    \endpicture}}
\newcommand{\einseins}{
  \parbox[b]{1cm}{
    \beginpicture
    \setcoordinatesystem units <.5cm,.5cm>
    \setplotarea x from 0 to 4, y from 0 to 0.5
    \circulararc -180 degrees from 0 0 center at 0.5 0
    \circulararc -180 degrees from 1 0 center at 1.5 0
    \circulararc -180 degrees from 2.5 0 center at 3 0
    \endpicture}}
\newcommand{\einszwei}{
  \parbox[b]{1cm}{
    \beginpicture
    \setcoordinatesystem units <.5cm,.5cm>
    \setplotarea x from 0 to 4.5, y from 0 to 0.5
    \circulararc -180 degrees from 0 0 center at 0.5 0
    \circulararc -180 degrees from 1.5 0 center at 2 0
    \circulararc -180 degrees from 3 0 center at 3.5 0
    \endpicture}}
\newcommand{\einsdrei}{
  \parbox[b]{1cm}{
    \beginpicture
    \setcoordinatesystem units <.5cm,.5cm>
    \setplotarea x from 0 to 1.5, y from 0 to 1.5
    \circulararc -180 degrees from 0 0 center at 0.5 0
    \ellipticalarc axes ratio 1:1.5 -180 degrees from 0 0 center at 0.5 0
    \ellipticalarc axes ratio 1:2 -180 degrees from 0 0 center at 0.5 0
    \ellipticalarc axes ratio 1:2.5 -180 degrees from 0 0 center at 0.5 0
    \endpicture}}
\newcommand{\einsvier}{
  \parbox[b]{1cm}{
    \beginpicture
    \setcoordinatesystem units <.5cm,.5cm>
    \setplotarea x from 0 to 2.5, y from 0 to 1
    \circulararc -180 degrees from 0 0 center at 0.5 0
    \ellipticalarc axes ratio 1:1.5 -180 degrees from 0 0 center at 0.5 0
    \ellipticalarc axes ratio 1:2 -180 degrees from 0 0 center at 0.5 0
    \circulararc -180 degrees from 1 0 center at 1.5 0
    \endpicture}}
\newcommand{\einsfuenf}{
  \parbox[b]{1cm}{
    \beginpicture
    \setcoordinatesystem units <.5cm,.5cm>
    \setplotarea x from 0 to 2.5, y from 0 to .5
    \circulararc -180 degrees from 0 0 center at 0.5 0
    \ellipticalarc axes ratio 1:1.5 -180 degrees from 0 0 center at 0.5 0
    \circulararc -180 degrees from 1 0 center at 1.5 0
    \ellipticalarc axes ratio 1:1.5 -180 degrees from 1 0 center at 1.5 0
    \endpicture}}
\newcommand{\einssechs}{
  \parbox[b]{1cm}{
    \beginpicture
    \setcoordinatesystem units <.5cm,.5cm>
    \setplotarea x from 0 to 2.5, y from 0 to 0.5
    \circulararc -180 degrees from 0 0 center at 0.5 0
    \ellipticalarc axes ratio 1:1.5 -180 degrees from 0 0 center at 0.5 0
    \circulararc -180 degrees from 1 0 center at 1.5 0
    \circulararc -180 degrees from 0 0 center at 1 0
    \endpicture}}
\newcommand{\einssieben}{
  \parbox[b]{1cm}{
    \beginpicture
    \setcoordinatesystem units <.5cm,.5cm>
    \setplotarea x from 0 to 3.5, y from 0 to 0.5
    \circulararc -180 degrees from 0 0 center at 0.5 0
    \ellipticalarc axes ratio 1:1.5 -180 degrees from 0 0 center at 0.5 0
    \circulararc -180 degrees from 1 0 center at 1.5 0
    \circulararc -180 degrees from 2 0 center at 2.5 0
    \endpicture}}
\newcommand{\einsacht}{
  \parbox[b]{1cm}{
    \beginpicture
    \setcoordinatesystem units <.5cm,.5cm>
    \setplotarea x from 0 to 3.5, y from 0 to 0.5
    \circulararc -180 degrees from 0 0 center at 0.5 0
    \circulararc -180 degrees from 1 0 center at 1.5 0
    \ellipticalarc axes ratio 1:1.5 -180 degrees from 1 0 center at 1.5 0
    \circulararc -180 degrees from 2 0 center at 2.5 0
    \endpicture}}
\newcommand{\einsneun}{
  \parbox[b]{1cm}{
    \beginpicture
    \setcoordinatesystem units <.5cm,.5cm>
    \setplotarea x from 0 to 3.5, y from 0 to 1
    \circulararc -180 degrees from 0 0 center at 0.5 0
    \circulararc -180 degrees from 1 0 center at 1.5 0
    \circulararc -180 degrees from 2 0 center at 2.5 0
    \ellipticalarc axes ratio 1.5:1 -180 degrees from 0 0 center at 1 0
    \endpicture}}
\newcommand{\zweinull}{
  \parbox[b]{1cm}{
    \beginpicture
    \setcoordinatesystem units <.5cm,.5cm>
    \setplotarea x from 0 to 3.5, y from 0 to 1
    \circulararc -180 degrees from 0 0 center at 0.5 0
    \circulararc -180 degrees from 1 0 center at 1.5 0
    \circulararc -180 degrees from 2 0 center at 2.5 0
    \ellipticalarc axes ratio 2:1 -180 degrees from 0 0 center at 1.5 0
    \endpicture}}
\newcommand{\zweieins}{
  \parbox[b]{1cm}{
    \beginpicture
    \setcoordinatesystem units <.5cm,.5cm>
    \setplotarea x from 0 to 3.5, y from 0 to 1
    \circulararc -180 degrees from 0 0 center at 0.5 0
    \ellipticalarc axes ratio 1:1.5 -180 degrees from 0 0 center at 0.5 0
    \circulararc -180 degrees from 1 0 center at 1.5 0
    \ellipticalarc axes ratio 1.5:1 -180 degrees from 1 0 center at 2 0
    \endpicture}}
\newcommand{\zweizwei}{
  \parbox[b]{1cm}{
    \beginpicture
    \setcoordinatesystem units <.5cm,.5cm>
    \setplotarea x from 0 to 3, y from 0 to 1
    \circulararc -180 degrees from 0 0 center at 0.5 0
    \ellipticalarc axes ratio 1:1.5 -180 degrees from 0 0 center at 0.5 0
    \circulararc -180 degrees from 1.5 0 center at 2 0
    \ellipticalarc axes ratio 1:1.5 -180 degrees from 1.5 0 center at 2 0
    \endpicture}}
\newcommand{\zweidrei}{
  \parbox[b]{1cm}{
    \beginpicture
    \setcoordinatesystem units <.5cm,.5cm>
    \setplotarea x from 0 to 3, y from 0 to 1
    \circulararc -180 degrees from 0 0 center at 0.5 0
    \ellipticalarc axes ratio 1:1.5 -180 degrees from 0 0 center at 0.5 0
    \ellipticalarc axes ratio 1:2 -180 degrees from 0 0 center at 0.5 0
    \circulararc -180 degrees from 1.5 0 center at 2 0
    \endpicture}}
\newcommand{\zweivier}{
  \parbox[b]{1cm}{
    \beginpicture
    \setcoordinatesystem units <.5cm,.5cm>
    \setplotarea x from 0 to 4.5, y from 0 to 1.5
    \circulararc -180 degrees from 0 0 center at 0.5 0
    \circulararc -180 degrees from 1 0 center at 1.5 0
    \circulararc -180 degrees from 2 0 center at 2.5 0
    \circulararc -180 degrees from 3 0 center at 3.5 0
    \endpicture}}
\newcommand{\zweifuenf}{
  \parbox[b]{1cm}{
    \beginpicture
    \setcoordinatesystem units <.5cm,.5cm>
    \setplotarea x from 0 to 4.5, y from 0 to 1.5
    \circulararc -180 degrees from 0 0 center at 0.5 0
    \circulararc -180 degrees from 1 0 center at 1.5 0
    \circulararc -180 degrees from 2 0 center at 2.5 0
    \ellipticalarc axes ratio 2:1 -180 degrees from 1 0 center at 2.5 0
    \endpicture}}
\newcommand{\zweisechs}{
  \parbox[b]{1cm}{
    \beginpicture
    \setcoordinatesystem units <.5cm,.5cm>
    \setplotarea x from 0 to 4.5, y from 0 to 1
    \circulararc -180 degrees from 0 0 center at 0.5 0
    \circulararc -180 degrees from 1 0 center at 1.5 0
    \ellipticalarc axes ratio 1.5:1 -180 degrees from 1 0 center at 2 0
    \ellipticalarc axes ratio 2:1 -180 degrees from 1 0 center at 2.5 0
    \endpicture}}
\newcommand{\zweisieben}{
  \parbox[b]{1cm}{
    \beginpicture
    \setcoordinatesystem units <.5cm,.5cm>
    \setplotarea x from 0 to 4, y from 0 to 1
    \circulararc -180 degrees from 0 0 center at 0.5 0
    \ellipticalarc axes ratio 1:1.5 -180 degrees from 0 0 center at 0.5 0
    \circulararc -180 degrees from 1.5 0 center at 2 0
    \circulararc -180 degrees from 2.5 0 center at 3 0
    \endpicture}}
\newcommand{\zweiacht}{
  \parbox[b]{1cm}{
    \beginpicture
    \setcoordinatesystem units <.5cm,.5cm>
    \setplotarea x from 0 to 4, y from 0 to 1
    \circulararc -180 degrees from 0 0 center at 0.5 0
    \ellipticalarc axes ratio 1:1.5 -180 degrees from 0 0 center at 0.5 0
    \circulararc -180 degrees from 1 0 center at 1.5 0
    \circulararc -180 degrees from 2.5 0 center at 3 0
    \endpicture}}
\newcommand{\zweineun}{
  \parbox[b]{1cm}{
    \beginpicture
    \setcoordinatesystem units <.5cm,.5cm>
    \setplotarea x from 0 to 4, y from 0 to 1
    \circulararc -180 degrees from 0 0 center at 0.5 0
    \circulararc -180 degrees from 1 0 center at 1.5 0
    \ellipticalarc axes ratio 1.5:1 -180 degrees from 0 0 center at 1 0
    \circulararc -180 degrees from 2.5 0 center at 3 0
    \endpicture}}
\newcommand{\dreinull}{
  \parbox[b]{1cm}{
    \beginpicture
    \setcoordinatesystem units <.5cm,.5cm>
    \setplotarea x from 0 to 5, y from 0 to 1.5
    \circulararc -180 degrees from 0 0 center at 0.5 0
    \circulararc -180 degrees from 1 0 center at 1.5 0
    \circulararc -180 degrees from 2.5 0 center at 3 0
    \circulararc -180 degrees from 3.5 0 center at 4 0
    \endpicture}}
\newcommand{\dreieins}{
  \parbox[b]{1cm}{
    \beginpicture
    \setcoordinatesystem units <.5cm,.5cm>
    \setplotarea x from 0 to 5, y from 0 to 1
    \circulararc -180 degrees from 0 0 center at 0.5 0
    \ellipticalarc axes ratio 1:1.5 -180 degrees from 0 0 center at 0.5 0
    \circulararc -180 degrees from 1.5 0 center at 2 0
    \circulararc -180 degrees from 3 0 center at 3.5 0
    \endpicture}}
\newcommand{\dreizwei}{
  \parbox[b]{1cm}{
    \beginpicture
    \setcoordinatesystem units <.5cm,.5cm>
    \setplotarea x from 0 to 5, y from 0 to 1
    \circulararc -180 degrees from 0 0 center at 0.5 0
    \circulararc -180 degrees from 1 0 center at 1.5 0
    \ellipticalarc axes ratio 1.5:1 -180 degrees from 1 0 center at 2 0
    \circulararc -180 degrees from 3.5 0 center at 4 0
    \endpicture}}
\newcommand{\dreidrei}{
  \parbox[b]{1cm}{
    \beginpicture
    \setcoordinatesystem units <.5cm,.5cm>
    \setplotarea x from 0 to 5, y from 0 to 1.5
    \circulararc -180 degrees from 0 0 center at 0.5 0
    \circulararc -180 degrees from 1 0 center at 1.5 0
    \circulararc -180 degrees from 2 0 center at 2.5 0
    \circulararc -180 degrees from 3.5 0 center at 4 0
    \endpicture}}
\newcommand{\dreivier}{
  \parbox[b]{1cm}{
    \beginpicture
    \setcoordinatesystem units <.5cm,.5cm>
    \setplotarea x from 0 to 5.5, y from 0 to 1.5
    \circulararc -180 degrees from 0 0 center at 0.5 0
    \circulararc -180 degrees from 1 0 center at 1.5 0
    \circulararc -180 degrees from 2.5 0 center at 3 0
    \circulararc -180 degrees from 4 0 center at 4.5 0
    \endpicture}}
\newcommand{\dreifuenf}{
  \parbox[b]{1cm}{
    \beginpicture
    \setcoordinatesystem units <.5cm,.5cm>
    \setplotarea x from 0 to 6, y from 0 to 1.5
    \circulararc -180 degrees from 0 0 center at 0.5 0
    \circulararc -180 degrees from 1.5 0 center at 2 0
    \circulararc -180 degrees from 3 0 center at 3.5 0
    \circulararc -180 degrees from 4.5 0 center at 5 0
    \endpicture}}
\newcommand{\Rand}[1]{\marginpar{#1}}
\renewcommand{\Rand}[1]{}\newcommand{\be}[1]{\Rand{\vspace{0,6cm}\tt #1}\begin{equation}\label{#1}}
  \newcommand{\ee}{\end{equation}}
\newcommand{\CC}{\mathcal{C}}
\newcommand{\CX}{\mathcal{X}}
\newcommand{\CM}{\mathcal{M}}
\newcommand{\CB}{\mathcal{B}}
\newcommand{\N}{\mathbb{N}}
\newcommand{\R}{\mathbb{R}}
\newcommand{\ve}{\varepsilon}
\renewcommand{\@fnsymbol}[1]{\ensuremath{%
    \ifcase#1\or 1\or 2\or 3\or
    \mathsection\or \mathparagraph\or \|\or 1\or
    2\or 3 \else\@ctrerr\fi}}
\begin{document}
\title{\Large Path-properties of the tree-valued Fleming--Viot
  process} \author{Andrej Depperschmidt\thanks{Abteilung f\"ur
    Mathematische Stochastik, Albert-Ludwigs University of Freiburg,
    Eckerstr. 1, D - 79104 Freiburg, Germany, e-mail:
    depperschmidt@stochastik.uni-freiburg.de}, Andreas
  Greven\thanks{Department Mathematik, Universit\"at
    Erlangen-N\"urnberg, Cauerstr. 11, D-91058 Erlangen, Germany,
    e-mail: greven@mi.uni-erlangen.de}\; and Peter
  Pfaffelhuber\thanks{Abteilung f\"ur Mathematische Stochastik,
    Albert-Ludwigs University of Freiburg, Eckerstra\ss e 1, D - 79104
    Freiburg, Germany, e-mail: p.p@stochastik.uni-freiburg.de}}

\thispagestyle{empty}
\date{\today}


\maketitle

\begin{abstract}
We consider the tree-valued Fleming--Viot process,
  $(X_t)_{t\geq 0}$, with mutation and selection as studied in
  Depperschmidt, Greven and Pfaffelhuber (2012). This process models
  the stochastic evolution of the genealogies and (allelic) types
  under resampling, mutation and selection in the population currently
  alive in the limit of infinitely large populations. Genealogies and
  types are described by (isometry classes of) marked metric measure
  spaces. The long-time limit of the neutral tree-valued Fleming--Viot
  dynamics is an equilibrium given via the marked metric measure space
  associated with the Kingman coalescent.

  In the present paper we pursue two closely linked goals. First, we
  show that two well-known properties of the neutral Fleming--Viot
  genealogies at fixed time $t$ arising from the properties of the
  dual, namely the Kingman coalescent, hold for the whole path. These
  properties are related to the geometry of the family tree close to
  its leaves. In particular we consider the number and the size of
  subfamilies whose individuals are not further than $\ve$ apart in
  the limit $\ve\to 0$. Second, we answer two open questions about the
  sample paths of the tree-valued Fleming--Viot process. We show that
  for all $t>0$ almost surely the marked metric measure space $X_t$
  has no atoms and admits a mark function. The latter property means
  that all individuals in the tree-valued Fleming--Viot process can
  uniquely be assigned a type. All main results are proven for the
  neutral case and then carried over to selective cases via Girsanov's
  formula giving absolute continuity.\end{abstract}

{\bf Keywords and Phrases:} Tree-valued Fleming--Viot process, path
properties, selection, mutation, Kingman coalescent.

{\bf AMS 2000 Subject classification:} Primary 60K35, 60J25; secondary
60J68, 92D10

\section{Introduction and  background}
A frequently used model for stochastically evolving multitype
populations is the Fleming--Viot diffusion. In the neutral case the
corresponding genealogy at a fixed time $t$ is described by the
Kingman coalescent which was introduced some~30 years ago as the
random genealogy relating the individuals of a population of large
constant size in equilibrium (\cite{Kingman1982a,Kingman1982b}).

As a genealogical tree with infinitely many leaves, the Kingman
coalescent exhibits some distinct geometric properties. In particular,
S.~Evans studied the random tree as a metric space, where the distance
of two leaves is given by the time to their most recent common
ancestor (\cite{Ev00}; see also the fine-properties of the metric
space derived in \cite{MR2534485}). Using this picture, the Kingman
coalescent close to its leaves has a nice shape: roughly speaking, in
the limit $\ve\to 0$, approximately $2/\ve$ balls of radius $\ve$ are
needed to cover the whole tree; see Section~4.2 in D.~Aldous' review
article (\cite{Ald99}). Equivalently, there are $2/\ve$ families whose
individuals have a common ancestor not further than $\varepsilon$ in
the past. Moreover, these $2/\varepsilon$ families have sizes of order
$\varepsilon$. More precisely, the size of a typical family is
exponential with parameter $2/\varepsilon$ (see eq~(35)
in\cite{Ald99}), and the empirical distribution of the family sizes
converges to this exponential distribution. However, these results
have been proved only for the genealogy of a population at a fixed
time.

In a series of papers of the authors, in part with A.~Winter,
\cite{GPWmetric09,DGP11,GPWmp,DGP12} the Kingman coalescent was
extended to a tree-valued process $(X_t)_{t\geq 0}$, where
$X_t$ gives the genealogy of an evolving population at time
$t$. The resulting process, the tree-valued Fleming--Viot process, is
connected to the Fleming--Viot measure-valued diffusion, which
describes the evolution of type-frequencies in a large (i.e.\
infinite) population of constant size. In the simplest case of neutral
evolution all individuals have the same chance to produce viable
offspring, i.e., the frequency of offspring of any subset of
individuals is a martingale. However, biologically most interesting is
the \emph{selective case} where the evolutionary success of an
individual depends on its (allelic) type and where also mutation
(i.e.\ random changes in types) may occur. This case including
mutation and selection was studied in \cite{DGP12}.

We note that rather than studying the full-tree valued process in the
infinite population limit, it is possible to obtain limits of its
functionals directly as well. For the  neutral tree-valued
Fleming-Viot process, this has been done for the height
\cite{PfaffelhuberWakolbinger2006, DelamsEtAl2010} and the length
\cite{MR2851692}. In addition, functionals of other tree-valued
processes have been studied, e.g.\ for the height of the tree in
branching processes \cite{EvansRalph2010} and for the height and
length of a population with the Bolthausen-Sznitman coalescent as
long-time limit \cite{MR2988406}.

\medskip \textbf{Goals:} The construction of the tree-valued
Fleming--Viot process allows one to ask if the above mentioned
properties of the geometry of the Kingman coalescent trees are almost
sure path properties of the tree-valued Fleming-Viot
process. Furthermore, while we gave a construction of the tree-valued
Fleming--Viot process under neutrality in~\cite{GPWmp} and under
mutation and selection in~\cite{DGP12}, some questions about path
behavior remained open. We will carry over some (not all) of the
geometric properties of the fixed random trees to the evolving paths
of trees in Theorems~\ref{T1}~--~\ref{T4} of this work.

In the next section, we explain in detail how we model
\emph{genealogical trees}. In order to formulate open questions let us
briefly mention here that we use a \emph{marked metric measure space
  (mmm-space)}, that is, a triple $(U,r,\mu)$ where $(U,r)$ is a
complete metric space describing genealogical distances between
individuals and $\mu$ is a probability measure on the Borel-$\sigma$
algebra of $U\times A$, where $A$ is the set of possible (allelic)
types. In particular, the tree-valued Fleming--Viot process
$(X_t)_{t\geq 0}$ takes values in the space of (continuous) paths in
the space of mmm-spaces.

To state two open questions from earlier work (see Remark~3.11 in
\cite{DGP12}), let $X_t = (U_t, r_t, \mu_t)$ be the state of
the tree-valued Fleming--Viot process at time $t\geq 0$.  First, we
ask if the measure~$\mu_t$ has atoms for some $t>0$. To understand
what this means, recall that the state of the measure-valued
Fleming--Viot process is purely atomic for all~$t>0$, almost
surely. However, in the tree-valued case, existence of an atom in the
measure $\mu_t \in \mathcal M_1(U_t\times A)$ implies that there
exists a set of positive $\mu_t$-mass such that individuals belonging
to this set have zero genealogical distance to each other. As we will
see in Theorem~\ref{T5}, this is not possible, and the tree-valued
Fleming--Viot process is non-atomic for all $t>0$, almost
surely. Second, we ask if every individual in $U_t$ can uniquely be
assigned a type which is of course the case for the Moran model, but
does not automatically carry over to the (infinite population)
diffusion limit. This is the case iff the support of $\mu_t$ is given
by $\{(u,\kappa_t(u)): u\in U_t\}$ for a function $\kappa_t: U_t\to
A$. In Theorem~\ref{T6}, we will see that this is indeed the case and
every individual can be assigned a type for all $t>0$, almost surely.

\medskip

\textbf{Methods:} Since the tree-valued Fleming--Viot process was
constructed using a well-posed martingale problem, we will frequently
use martingale techniques in our proofs. These allow us to study the
\emph{sample Laplace-transform} for the distance of two points of the
tree as a semi-martingale. In addition, population models have
specific features that will also be useful. For example all
individuals have unique ancestors even though not all individuals have
descendants and if an individual has a descendant, she might as well
have many. This simple structure can be used for finite population
models (e.g.\ the Moran model) or the tree-valued Fleming--Viot
process, since this infinite model arises as a large-population limit
from finite Moran models (for the neutral case see Theorem~2 of
\cite{GPWmp} and for the selective case Theorem~3 of \cite{DGP12}) to
derive properties of the family structure.

An important point of the proofs is that we can transfer properties
from the neutral case since for most forms of selection (which are
determined by the interacting fitness functions, which gives the
dependence of the offspring distribution depends on the allelic type),
the resulting process is \emph{absolutely continuous} to the
\emph{neutral case} (which comes with no dependency between allelic
type and offspring distribution) via a \emph{Girsanov transform}.

\medskip

\textbf{Outline:} The paper is organized as follows: In
Section~\ref{s.tvFV} we recall the definition of the state space of
the tree-valued Fleming--Viot process, its construction by a
well-posed martingale problem and some of its properties. In
Section~\ref{S:res}, we give our main results. Theorem~\ref{T1} states
that the law of large numbers for the number of ancestors of Kingman's
coalescent holds along the whole path of the tree-valued Fleming--Viot
process. Moreover, we discover a Brownian motion within the
tree-valued Fleming--Viot process based on the fluctuations of the
number of ancestors; see Theorem~\ref{T2}. Another law of large
numbers is obtained for a statistic concerning the family sizes and we
make a big step towards this result in Theorem~\ref{T3}. Another
Brownian motion is discovered within the tree-valued Fleming--Viot
process based on family sizes in Theorem~\ref{T4}. Finally we show the
non-atomicity along the path in Theorem~\ref{T5} and obtain existence
of a mark function in Theorem~\ref{T6}.

In Section~\ref{s.proofPcover} we prove Theorem~\ref{T1} and after some
preparatory moment computations in Section~\ref{S:prep}, we give in the
subsequent sections the remaining proofs of the main results. We note that
various proofs have been carried out using {\sc Mathematica} and can be
reproduced by the reader via the accompanying {\sc Mathematica}-file.

\section{The tree-valued Fleming--Viot process}
\label{s.tvFV}
In this section, we recall the tree-valued Fleming--Viot process given
as the unique solution of a \emph{martingale problem} on the space of
\emph{marked metric measure spaces}. The material presented here is a
condensed version of results from~\cite{GPWmetric09, DGP11, GPWmp}
and~\cite{DGP12}. We only recall notions needed to follow our
arguments in the present paper. Let us fix some notation first.

\begin{notation} \mbox{}\\%
  For \label{not:aux} a Polish space $E$ the set of all bounded
  measurable functions is denoted by $\CB(E)$, its subset containing
  the bounded and continuous functions by $\CC_b(E)$, the set of
  c\`adl\`ag function $I\subseteq \mathbb R\to E$ by $\mathcal D_E(I)$
  (which is equipped with the Skorohod topology) and the subset of
  continuous functions by $\mathcal C_E(I)$. The set of probability
  measures on (the Borel $\sigma$-algebra of) $E$ is denoted by
  $\CM_1(E)$ and $\Rightarrow$ denotes either weak convergence of
  probability measures or convergence in distribution of random
  variables. If $\phi: E \to E'$ for some Polish space $E'$ then the
  image measure of $\mu \in \CM_1(E)$ under $\phi$ is denoted by
  $\phi_\ast \mu$. For functions $\lambda\mapsto a_\lambda$ and
  $\lambda\mapsto b_\lambda$, we write $a_\lambda\lesssim b_\lambda$
  if there is $C>0$ such that $a_\lambda\leq Cb_\lambda$ uniformly for
  all $\lambda$.  Furthermore for $\lambda_0 \in \R\cup \{\pm
  \infty\}$ we write $a_\lambda \stackrel{\lambda \to
    \lambda_0}{\approx \,} b_\lambda$ if $a_\lambda$ and $b_\lambda$
  are asymptotically equivalent as $\lambda\to\lambda_0$, i.e.\ if
  $a_\lambda/b_\lambda \to 1$ as $\lambda \to \lambda_0$. For product
  spaces $E_1 \times E_2 \times \dots$ we denote the projection
  operators by $\pi_{E_1}, \pi_{E_2}, \dots$. When there is no chance
  of ambiguity we use the shorter notation $\pi_1, \pi_2,\dots$.
\end{notation}

\subsection{The state space: genealogies as marked metric measure spaces}

At any time $t \ge 0$ the state of the neutral tree-valued
Fleming-Viot process without types is a genealogical tree describing
the ancestral relations among individuals alive at time $t$. Such
trees can be encoded by ultrametric spaces and vice versa where the
distance of two individuals is given by the time back to their most
recent common ancestor. Adding selection and mutation to the process
requires that we not only keep track of the genealogical distances
between individuals but also of the type of each individual. This
leads to the concept of marked metric measure spaces which we recall
here. For more details and interpretation of the state space we refer
to Section~2.3 in \cite{DGP12} and to Remark~\ref{rem:int-st-sp}
below.

Throughout, we fix a \emph{compact metric space $A$} which we refer to
as the \emph{(allelic) type space}. An \emph{$A$-marked (ultra-)metric
  measure space}, abbreviated as $A$-mmm space or just mmm-space in
the following, is a triple $(U,r,\mu)$, where $(U,r)$ is an
ultra-metric space and $\mu\in\mathcal M_1(U\times A)$ is a
probability measure on $U\times A$.

The state space of the tree-valued Fleming--Viot process is
\begin{align}\label{eq:UA}
  \mathbb U_A \coloneqq \bigl\{\overline{(U,r,\mu)}: (U,r,\mu) \text{ is $A$-mmm
    space}\bigr\},
\end{align}
where $\overline{(U,r,\mu)}$ is the equivalence class of the $A$-mmm
space $(U,r,\mu)$, and two mmm-spaces $(U_1,r_1,\mu_1)$ and
$(U_2,r_2,\mu_2)$ are called \emph{equivalent} if there exists an
isometry (here $\text{supp}$ of a measure denotes its support)
\begin{align}
  \label{eq:UA2}
  \varphi: \text{supp} \bigl(\pi_{U_1}{}_\ast \mu_1\bigr) \to
  \text{supp}\bigl(\pi_{U_2}{}_\ast \mu_2\bigr)
\end{align}
with $(\varphi,id)_\ast \mu_1 = \mu_2$. The subspace of compact mmm-spaces
\begin{align}
  \label{eq:31k}
  \mathbb U_{A,c} & \coloneqq  \bigl\{\overline{(U,r,\mu)} \in \mathbb U_A:
  (U,r) \text{ compact} \bigr\} \subsetneq \mathbb U_A
\end{align}
will play an important role.

\begin{remark}[Interpretation of equivalent marked metric measure
  spaces]\mbox{}\\%
  1.\ In \label{rem:int-st-sp} our presentation, only
  \emph{ultra}-metric spaces $(U,r)$ will appear. The reason is that
  we only consider stochastic processes whose state at time $t$
  describes the genealogy of the population alive at time $t$, which
  makes $r$ an ultra-metric.

  \medskip

  \noindent 2.\ There are several reasons why we consider equivalence
  classes of marked metric spaces instead of the marked metric spaces
  themselves. The most important is that we view a genealogical tree as a metric
  space on its set of leaves. Since in population genetic models the individuals
  are regarded as exchangeable (at least among individuals carrying the same
  allelic type), reordering of leaves does not change (in this view) the tree.
\end{remark}

In order to construct a stochastic process with c\`adl\`ag paths and
state space $\mathbb U_A$, we have to introduce a topology. To this end,
we need to introduce test functions with domain $\mathbb U_A$.

\begin{definition}[Polynomials]\mbox{}\\
  We set $\mathbb R^{\binom{\mathbb N}{2}} \coloneqq  \{\underline{\underline
    r}:=(r_{ij})_{1\leq i<j}: r_{ij}\in\mathbb R\}$. A function $\Phi: \mathbb
  U_A \to \mathbb R$ is a \emph{polynomial},
  if there is a measurable function $\phi: \mathbb R^{\binom{\mathbb
      N}{2}} \times A^{\mathbb N} \to\mathbb R$ depending only on
  finitely many coordinates such that
  \begin{align}\label{eq:Phi}
    \begin{split}
      \Phi\bigl(\overline{(U,r,\mu)}\bigr) & \coloneqq
      \Phi^\phi\bigl(\overline{(U,r,\mu)}\bigr) \\ & \coloneqq \langle
      \mu^{\otimes \N}, \phi\rangle \coloneqq \int \mu^{\otimes \N }(d\underline
      u, d\underline a) \phi\bigl(r(u_i,u_j)_{1\leq i<j}, (a_i)_{i\geq 1}\bigr),
    \end{split}
  \end{align}
  where $\mu^{\otimes \N}$ is the infinite product measure, i.e.\ the
  law of a sequence sampled independently with sampling measure $\mu$.
\end{definition}
Let us remark that  functions of the form \eqref{eq:Phi} are actually
monomials.  However, products and sums of such monomials are again
monomials, and hence we may in fact speak of polynomials; cf.\ the
example below.

\begin{remark}[Interpretation of polynomials]\mbox{}\\
  Assume that $\phi$ only depends on the first $\binom n 2$ coordinates in
  $r(u_i,u_j)_{1\leq i<j}$ and the first $n$ in $(a_i)_{i\geq 1}$. Then, we view
  a function of the form~\eqref{eq:Phi} as taking a sample of size $n$ according
  to $\mu$ from the population, observing the value under $\phi$ of this sample
  and then taking the $\mu$-sample mean over the population.
\end{remark}

\begin{example}[Some functions of the form~\eqref{eq:Phi}]\mbox{}\\
  Some \label{rem:intPhi} functions of the form~\eqref{eq:Phi} will
  appear frequently in this paper, for example $\underline{\underline
    r} \mapsto \phi(\underline{\underline r}):=
  \psi_\lambda^{12}(\underline{\underline r}) \coloneqq e^{-\lambda
    r_{12}}$,
  \begin{align}\label{eq:int}
    \Psi_\lambda^{12}\bigl(\overline{(U,r,\mu)}\bigr) \coloneqq \langle
    \mu^{\otimes \N  }, \psi_\lambda^{12}\rangle = \int
    (\pi_1{}_\ast\mu)^{\otimes 2}(du_1, du_2) e^{-\lambda r(u_1,u_2)}.
  \end{align}
  This function arises from sampling two leaves, $u_1$ and $u_2$, from
  the genealogy $(U,r)$ according to $\pi_1{}_\ast \mu$ and averaging
  over the test function $e^{-\lambda r(u_1,u_2)}$ of this
  sample. Then \ $(\Psi^{12})^2$ is again of the form~\eqref{eq:Phi}
  and
  \begin{align}
    \label{eq:31l}
    (\Psi^{12}_\lambda)^2\bigl(\overline{(U,r,\mu)}\bigr) = \int
    (\pi_1{}_\ast\mu)^{\otimes 4} (du_1,\dots, du_4) e^{-\lambda (r(u_1,u_2) +
      r(u_3,u_4))}.
  \end{align}
  Another function that will be used and which also depends on types is
  given by
  \begin{align}\label{eq:int2}
    \widehat\Psi_\lambda^{12}\bigl(\overline{(U,r,\mu)}\bigr) \coloneqq \int
    \mu^{\otimes 2} (du_1, du_2, da_1, da_2) \ind{a_1=a_2} e^{-\lambda
      r(u_1,u_2)}.
  \end{align}
  In this function $u_1$ and $u_2$ contribute to the integral only if their
  types, $a_1$ and $a_2$ agree.
\end{example}

Since we use polynomials as the domain of the generator for the
tree-valued Fleming--Viot process, we need to restrict this class to smooth
functions.
\begin{definition}[Smooth polynomials]\mbox{}\\
  We denote by
  \begin{align}
    \label{eq:31m}
    \Pi^1 \coloneqq \Bigl\{\Phi^\phi \text{ as in }~\eqref{eq:Phi}:
    \phi \text{ bounded, measurable and for all $\underline a \in
      A^\N$, } \phi(\cdot, \underline a)
    \in\CC_b^1\bigl(\R^{\binom{\mathbb N}{2}}\bigr) \Bigr\}
  \end{align}
  the set of \emph{smooth (in the first coordinate) polynomials}.
  Furthermore we denote by $\Pi^1_n$ the subset of $\Pi^1$ consisting
  of all $\Phi^\phi$ for which $\phi(\underline{\underline r},
  \underline a)$ depends at most on the first $\binom n 2$ coordinates
  of $\underline{\underline r}$ and the first $n$ of $\underline a$
  and hence have degree at most $n$.
\end{definition}

\begin{definition}[Marked Gromov-weak topology]\mbox{}\\
  The \emph{marked Gromov-weak topology} on $\mathbb U_A$ is the
  coarsest topology such that all $\Phi^\phi \in \Pi^1$ with (in
    both variables) continuous $\phi$ are continuous.
\end{definition}

\noindent
The following is from Theorems~2 and~5 in \cite{DGP11}:

\begin{proposition}[Some topological facts about $\mathbb U$]\mbox{}\\%
  The following properties hold:
  \begin{enumerate}
  \item The space $\mathbb U_A$ equipped with the marked Gromov-weak
    topology is Polish.
  \item The set $\Pi^1$ is a convergence determining algebra of
    functions, i.e.\ for random $\mathbb U_A$-valued variables $X,
    X_1, X_2,\dots$ we have
    \begin{equation}\label{ag1}
      X_n\xRightarrow{n\to\infty} X \quad \mbox{ iff } \quad  \mathbf
      E[\Phi(X_n)]
      \xrightarrow{n\to\infty} \mathbf E[\Phi(X)] \; \mbox{ for all
      }\;  \Phi\in\Pi^1.
    \end{equation}
  \end{enumerate}
\end{proposition}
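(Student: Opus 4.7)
The plan is to establish the proposition by extending the unmarked Gromov-weak framework of \cite{GPWmetric09} to the presence of the mark space $A$. For part (1), I would introduce a \emph{marked Gromov-Prokhorov metric} on $\mathbb{U}_A$ by
\[
d_{mGP}\bigl(\overline{(U_1,r_1,\mu_1)},\overline{(U_2,r_2,\mu_2)}\bigr)\coloneqq\inf_{Z,\varphi_1,\varphi_2} d_{\mathrm{Pr}}\bigl((\varphi_1,\mathrm{id})_\ast\mu_1,(\varphi_2,\mathrm{id})_\ast\mu_2\bigr),
\]
where the infimum runs over Polish metric spaces $Z$ and isometric embeddings $\varphi_i:\text{supp}(\pi_{U_i}{}_\ast\mu_i)\hookrightarrow Z$, and $d_{\mathrm{Pr}}$ is the Prokhorov distance on $\CM_1(Z\times A)$ with the product metric. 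Symmetry and the triangle inequality (via the standard three-space gluing) are routine; that $d_{mGP}=0$ implies equivalence of mmm-spaces is the marked version of Gromov's reconstruction theorem, which I would derive from the fact that the pushforward of $\mu^{\otimes\N}$ under $(u_i,a_i)_i\mapsto((r(u_i,u_j))_{i<j},(a_i)_i)$ determines the equivalence class. Separability follows by approximation with finitely-supported mmm-spaces carrying rational distances, rational masses, and marks from a countable dense subset of the compact space $A$. Completeness, the most delicate step, proceeds by passing to a fast subsequence, gluing consecutive terms into a common Polish ambient space via near-optimal couplings, and extracting a weak limit of the resulting marked measures, whose support then defines the limiting mmm-space. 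One also has to check that $d_{mGP}$ generates the marked Gromov-weak topology; one direction uses the continuous mapping theorem in the ambient space, the reverse uses tightness bounds obtained from moments of the two-point distance distribution $\Psi^{12}_\lambda$.

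For part (2), I would first verify that $\Pi^1$ is an algebra: given $\Phi^{\phi_1},\Phi^{\phi_2}\in\Pi^1$, after relabelling coordinates of the second monomial onto disjoint indices, the product $\Phi^{\phi_1}\cdot\Phi^{\phi_2}$ is a monomial with $C^1_b$ first-coordinate dependence and bounded measurable second-coordinate dependence, hence in $\Pi^1$; sums are automatic. The convergence-determining claim then splits into two parts. The distribution of a random $\mathbb{U}_A$-valued variable is characterised by the joint law of the marked distance matrix $((r(u_i,u_j))_{i<j},(a_i)_i)$ under $\mathbf{E}[\mu^{\otimes\N}]$; because $C^1_b$ functions in the distances together with bounded measurable functions in the marks (on the compact space $A$) form a separating family on $\R^{\binom{\N}{2}}\times A^{\N}$, $\Pi^1$ separates distributions on $\mathbb{U}_A$. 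Combining this with tightness, which I would deduce from uniform control of the two-point functionals $\Psi^{12}_\lambda$ (which are specific elements of $\Pi^1$) following the strategy in the unmarked setting, pointwise convergence of expectations on $\Pi^1$ forces marked Gromov-weak convergence in distribution.

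The principal obstacle I anticipate is the completeness of $d_{mGP}$: one must carry out the gluing of an infinite sequence of mmm-spaces into a single ambient Polish space while preserving the marks and the isometric structure, and then control the Prokhorov limit of the glued marked measures. A secondary difficulty is showing that the restriction to $C^1_b$ test functions in the distance coordinate—rather than all jointly continuous test functions used in the definition of the topology—suffices for separation; this uses density of $C^1_b$ in $C_b$ for uniform convergence on compact subsets of $\R^{\binom{\N}{2}}$ together with the tightness of distance-matrix distributions provided by the $\Psi^{12}_\lambda$ family. Both points are exactly where the marked extension requires genuine work over and above the unmarked proofs in \cite{GPWmetric09}, and they are carried out in \cite{DGP11}.
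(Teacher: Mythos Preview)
The paper does not prove this proposition; immediately before stating it the authors write that it is ``from Theorems~2 and~5 in \cite{DGP11}'', and no argument is supplied here. Your outline is therefore not to be compared against anything in the present paper, but it is broadly faithful to the route actually taken in \cite{DGP11}: a marked Gromov--Prokhorov metric is introduced, separability and completeness are established by the approximation and gluing arguments you describe, and the convergence-determining property of $\Pi^1$ is obtained via the marked distance-matrix distribution together with a tightness criterion.

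One point in your sketch deserves care. You twice invoke ``uniform control of the two-point functionals $\Psi^{12}_\lambda$'' as the source of tightness, both for showing that $d_{mGP}$ metrises the marked Gromov-weak topology and for the convergence-determining step. Control of pairwise-distance Laplace transforms alone does not force Gromov-weak relative compactness: the relevant criterion in \cite{GPWmetric09} and \cite{DGP11} is a modulus-of-mass-distribution condition (how much mass can be carried by balls of a given small radius), and this is not captured by the family $\Psi^{12}_\lambda$ by itself. The actual argument shows that convergence of \emph{all} polynomials in $\Pi^1$ yields the required modulus control; singling out $\Psi^{12}_\lambda$ is an over-simplification that would leave a genuine gap if taken literally.
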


\subsection{Construction of the tree-valued FV-process}
The tree-valued Fleming--Viot process will be defined via a well-posed
martingale problem. Let us briefly recall the concept of a martingale
problem.

\begin{definition}[Martingale problem]\mbox{}\\
  Let \label{D:01} $E$ be a Polish space, $\mathbf P_0 \in\mathcal
  M_1(E)$, $\mathcal F \subseteq {\mathcal B}(E)$ and $\Omega$ a
  linear operator on ${\mathcal B}(E)$ with domain $\mathcal F$. The
  law $\mathbf{P}$ of an $E$-valued stochastic process $\mathcal
  X=(X_t)_{t\geq 0}$ is a \emph{solution of the $(\mathbf
    P_0,\Omega,\mathcal F)$-martingale problem} if $X_0$ has
  distribution $\mathbf P_0$, $\mathcal X$ has paths in the space
  ${\mathcal D}_E([0,\infty))$, almost surely, and for all
  $F\in\mathcal F$,
  \begin{equation}\label{13def}
    \Big(F(X_t)-\int_0^t \Omega F(X_s) ds \Big)_{t\geq 0}
  \end{equation}
  is a $\mathbf{P}$-martingale with respect to the canonical filtration.
  The $(\mathbf P_0,\Omega,\mathcal F)$-martingale problem is said to
  be \emph{well-posed} if there is a unique solution $\mathbf{P}$.
\end{definition}

Let us first specify the \emph{generator} of the tree-valued
Fleming--Viot process. It is a \emph{linear operator with domain
  $\Pi^1$}, given by
\begin{align}\label{dgpGir2}
  \Omega \coloneqq \Omega^{\mathrm{grow}}+\Omega^{\mathrm{res}} +
  \Omega^{\mathrm{mut}} + \Omega^{\mathrm{sel}}.
\end{align}
Here, for $\Phi = \Phi^\phi\in\Pi^1_n$, the linear operators
$\Omega^{\mathrm{grow}}, \Omega^{\mathrm{res}}, \Omega^{\mathrm{mut}},
\Omega^{\mathrm{sel}}$ are defined as follows:
\begin{enumerate}
\item We define the \emph{growth operator} by
  \begin{equation}\label{eq:omega1}
    \Omega^{\mathrm{grow}}\Phi\bigl(\overline{(U,r,\mu)}\bigr)
    \coloneqq \big\langle\mu^{\otimes \N}, \langle \nabla_{\underline{\underline
        r}}\phi,1\rangle \big \rangle,
  \end{equation}
  with
  \begin{align}
    \label{eq:27}
    \langle \nabla_{\underline{\underline r}}\phi, 1\rangle \coloneqq
    \sum_{1 \leq i<j} \frac{\partial \phi}{\partial
      r_{ij}}(\underline{\underline r}, \underline u).
  \end{align}
\item We define the \emph{resampling operator} by
  \begin{align}
    \label{eq:omega2}
    \Omega^{\mathrm{res}}\Phi\bigl(\overline{(U,r,\mu)}\bigr) \coloneqq
    \frac{1}{2}\sum_{k,\ell=1}^n \langle \mu^{\otimes \N}, \phi\circ\theta_{k,\ell}
    - \phi\rangle,
  \end{align}
  with $\theta_{k,\ell}(\underline{\underline r}, \underline a) =
  (\underline{ \underline{\tilde r}}, \underline{\tilde a})$, where
  \begin{equation}
    \label{pp11b}
    \underline{\underline{\tilde r}}_{ij} \coloneqq
    \begin{cases}
      r_{ij}, & \mbox{ if }i,j\neq \ell, \\
      r_{i\wedge k, i\vee k} , & \mbox{ if }j=\ell, \\
      r_{j\wedge k, j\vee k}, & \mbox{ if }i=\ell,
    \end{cases} \quad \text{and} \quad
    \underline{\tilde a}_i \coloneqq
    \begin{cases}
      a_i, & i\neq \ell,\\
      a_k, & i=\ell.
    \end{cases}
  \end{equation}
\item For the \emph{mutation operator}, let $\vartheta\geq 0$ and
  $\beta(\cdot,\cdot)$ be a Markov transition kernel from $A$ to the \emph{Borel
    sets } of $A$ and set
  \begin{align}
    \label{eq:omega3}
    \Omega^{ \mathrm{mut}}\Phi\bigl(\,\overline{(U,r,\mu)}\,\bigr)
    \coloneqq \vartheta \cdot \sum_{k=1}^n \langle \mu^{\otimes \N},
    B_k\phi\rangle,
  \end{align}
  where
  \begin{equation}
    \label{eq:33}
    \begin{aligned}
      B_k\phi & \coloneqq \beta_k\phi - \phi, \\
      (\beta_k\phi)(\underline{\underline r}, \underline a) & \coloneqq \int
      \phi(\underline{\underline r}, \underline a_k^b) \beta(a_k, db),\\
      \underline a_k^b & \coloneqq (a_1,\dots,a_{k-1},b,a_{k+1},\dots).
    \end{aligned}
  \end{equation}
  We say that mutation has a parent-independent component if
  $\beta( \cdot , \cdot)$ is of the form
  \begin{align}
    \label{eq:782}
    \beta(u,dv) = z \bar\beta(dv) + (1-z)\widetilde\beta (u,dv)
  \end{align}
  for some $z\in (0,1]$, $\bar\beta\in\mathcal M_1(A)$ and a
  probability transition kernel $\widetilde\beta$ on $A$.
\item For \emph{selection}, consider the \emph{fitness function}
  \begin{align}\label{dgpGir4}
    \chi': A \times A \times \mathbbm R_+ \to [0,1]
  \end{align}
  with $\chi'(a,b,r) = \chi'(b,a,r)$ for all $a,b\in A, r\in\mathbbm
  R_+$. We require that $\chi'\in\overline{\mathcal
    C}^{0,0,1}(A\times A \times\mathbbm R_+)$, i.e.\ $\chi'$ is
  continuous and continuously differentiable with respect to its
  third coordinate. Then, with $\alpha\geq 0$ (the selection
  intensity) and
  \begin{align}\label{dgpGir5}
    \chi'_{k,\ell} ( \underline{\underline r},
    \underline a) = \chi'(a_k, a_{\ell}, r_{k\wedge
      \ell, k\vee \ell})
  \end{align}
  we set
  \begin{align}\label{eq:pp15}
    \Omega^{ \mathrm{sel}}\Phi\bigl(\overline{(U,r,\mu)}\bigr) & \coloneqq
    \alpha \cdot \sum_{k=1}^n \langle \mu^{\otimes \N}, \phi \cdot \chi'_{k,n+1}
    - \phi\cdot \chi'_{n+1,n+2} \rangle.
  \end{align}

  If $\chi'(a,b,r)$ does not depend on $r$, and if there is $\chi: A \to
  [0,1]$ such that
  \begin{align}\label{eq:208b}
    \chi'(a,b,r)=\chi(a)+\chi(b),
  \end{align}
  we say that selection is additive and conclude that with
  \begin{align}
    \label{eq:pp15c}
    \chi_k(\underline{\underline r}, \underline a) = \chi(a_k),
  \end{align}
  we have
  \begin{align}\label{eq:pp15b}
    \Omega^{ \mathrm{sel}}\Phi\bigl(\overline{(U,r,\mu)}\bigr) &:= \alpha \cdot
    \sum_{k=1}^n \langle \mu^{\otimes \N}, \phi \cdot \chi_{k} -
    \phi\cdot \chi_{n+1} \rangle.
  \end{align}
\end{enumerate}

\begin{remark}[Interpretation of generator terms]\mbox{}\\%
  The growth, resampling, mutation and selection generator terms are interpreted
  as follows:
  \begin{enumerate}
  \item \emph{Growth:} The distance of any pair of individuals is
    given by the time to the most recent common ancestor (MRCA). When
    time passes this distance grows at speed~1. Note that in
    \cite{GPWmp} and \cite{DGP12} the corresponding distance was twice
    the time to MRCA. The reason for this change were some
    simplifications of the terms in the computations that we will see
    later.
  \item \emph{Resampling:} The term $\langle \mu^{\otimes \N},
    \phi\circ\theta_{k,\ell} - \phi\rangle$ describes the action of an
    event where an offspring of individual $k$ replaces individual
    $\ell$ in the sample corresponding to the polynomial $\Phi^\phi$.
    This term is analogous to the measure-valued case; see e.g.\
    (3.21) in \cite{EthierKurtz1993}, but acts on both, the genealogy
    and the types.
  \item \emph{Mutation:} It is important to note that mutation only
    affects types, but not genealogical distances. Hence, the mutation
    operator agrees with the measure-valued case; see e.g.\~(3.16) in
    \cite{EthierKurtz1993}. Note that here we consider only jump
    operators $B$.
  \item \emph{Selection:} This term is best understood when
    considering a finite population. Consider for simplicity the case
    of additive selection (i.e.\ \eqref{eq:208b} holds) in particular
    covering haploid models. Here, the offspring of an individual of
    type $a$ replaces some randomly chosen individual at rate $\alpha
    \chi(a)$ due to selection. In the large population limit, we only
    consider a sample of $n$ individuals and this sample changes only
    if some offspring of an individual outside the sample, e.g.\ the
    $(n+1)$st individual by exchangeability, replaces an individual
    within the sample, the $k$th say, due to selection. After this
    selection event, the fitness of the $k$th individuals is $\chi(a)$
    which is also seen from the generator term. In the case of
    selection acting on diploids, the situation is similar, but one
    has to build diploids from haploids first and then apply the
    fitness function.
  \end{enumerate}
\end{remark}

In \cite{GPWmp,DGP12} the tree-valued Fleming--Viot processes were
constructed via well-posed martingale problems. The following
proposition summarizes Theorems~1, 2 and~4 from \cite{DGP12}.

\begin{proposition}[Tree-valued Fleming--Viot process]\mbox{}\\
  For \label{P:main} $\mathbf P_0\in\mathcal M_1(\mathbb U_{A})$
  the $(\mathbf P_0, \Omega, \Pi^1)$-martingale problem is well-posed.
  Its solution $\CX = (X_t)_{t\geq 0}$ defines a Feller semigroup,
  i.e.\ $X_0\mapsto \mathbf E[f(X_t)|X_0]$ is continuous for all
  $f\in\mathcal C_b(\mathbb U_{A})$, and hence, $\mathcal X$ is a
  strong Markov process.

  Furthermore, the process $\CX$ satisfies the following properties:
  \begin{enumerate}
  \item $\mathbf P(t\mapsto X_t \text{ is continuous}) = 1$.
  \item $\mathbf P(X_t \in \mathbb U_{A,c} \text{ for all }t>0) = 1$.
  \item Let $\Phi = \Phi^\phi\in\Pi^1_n$ such that $\phi$ is symmetric
    under permutations. Then, the quadratic variation of the
    semi-martingale $\Phi(\mathcal X) \coloneqq (\Phi(X_t))_{t\geq 0}$
    is given by
    \begin{align}\label{eq:qv1}
      [\Phi(\mathcal X)]_t & = \int_0^t \big\langle \mu_s, \big(
      \rho_s
      - \langle \mu_s, \rho_s\rangle\big)^2\big\rangle\, ds, \\
      \label{eq:qv2}
      \rho_s(u_1) & \coloneqq \int \mu_s^{\otimes \N} (d(u_2,u_3,\dots ))
      \phi((r_s(u_i, u_j))_{1\leq i<j}).
    \end{align}
  \item Let $\mathbf P_\alpha$ be the distribution of $\mathcal X$
    with selection intensity $\alpha$. Then, for all $\alpha,
    \alpha'\geq 0$, the laws $\mathbf P_{\alpha}$ and $\mathbf
    P_{\alpha'}$ are absolutely continuous with respect to each other.
  \item If either (i) $\alpha=0$ and the process with generator
    $\Omega^{\textnormal{mut}}$ has a unique equilibrium or (ii) $\alpha\geq
    0$ and mutation has a parent-independent component, then the
    process $\mathcal X$ is ergodic. That is, there is an $\mathbb
    U_{A,c}$-valued random variable $X_\infty$, depending on the model
    parameters but not the initial law, such that
    $X_t\xRightarrow{t\to\infty} X_\infty$.
  \end{enumerate}
\end{proposition}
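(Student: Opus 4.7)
The plan is to reduce the selective case to the neutral one via a Girsanov change of measure, and to treat the neutral case through a combination of coalescent duality and a Moran-model approximation. First I would establish existence and uniqueness of the $(\mathbf P_0, \Omega^{\mathrm{grow}}+\Omega^{\mathrm{res}}+\Omega^{\mathrm{mut}}, \Pi^1)$-martingale problem for $\alpha=0$. Since $\chi'$ is uniformly bounded, an exponential martingale built from $\Omega^{\mathrm{sel}}$ acting against the resampling noise then supplies a Girsanov density transforming the neutral path law into the selective one; standard martingale-problem machinery transfers well-posedness, the Feller and strong Markov properties, and path continuity to every $\alpha\geq 0$, simultaneously yielding item~(iv).

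For the neutral case, uniqueness would come from a function-valued duality with the Kingman coalescent. For each $\Phi^\phi \in \Pi^1_n$, $\mathbf E[\Phi^\phi(X_t)\mid X_0]$ can be written by running the Kingman coalescent on $n$ labels for time $t$, reading off the induced pairwise distances (time to most-recent-common-ancestor plus the distance in $X_0$ between the ancestors reached) and convolving with the mutation semigroup on the allelic labels. Since $\Pi^1$ is convergence-determining on $\mathbb U_A$ by the proposition cited above, this pins down the one-dimensional marginals, and the Markov property extends to all finite-dimensional distributions. Existence is then obtained via a Moran-model approximation: the empirical mmm-space of the $N$-individual Moran model, whose generator applied to polynomials converges to $\Omega$ as $N\to\infty$, converges weakly on $\mathcal D_{\mathbb U_A}([0,\infty))$ to a solution, with tightness supplied by Aldous' criterion applied to real-valued polynomials and by compact containment coming from the coming-down-from-infinity of the genealogies.

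The quadratic variation formula (iii) would be derived from the carré-du-champ identity: a short computation on symmetric $\phi$ gives
\begin{align*}
\Omega(\Phi^2) - 2\Phi\cdot \Omega\Phi \;=\; \big\langle \mu, (\rho - \langle \mu, \rho\rangle)^2\big\rangle,
\end{align*}
because $\Omega^{\mathrm{grow}}$ and $\Omega^{\mathrm{mut}}$ act linearly on a single sample and cancel in this combination, whereas $\Omega^{\mathrm{res}}$ produces exactly the empirical variance of $\rho$. Item~(i), continuity of paths, then follows because $\int_0^t \Omega\Phi(X_s)\,ds$ is continuous in $t$ and the martingale part has absolutely continuous quadratic variation, so $t\mapsto \Phi(X_t)$ is continuous for every $\Phi \in \Pi^1$; continuity in $\mathbb U_A$ is lifted via the convergence-determining property. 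Item~(ii), compactness at positive times, is deduced from the fact that the Kingman coalescent comes down from infinity: for any $\varepsilon>0$ the number of $\varepsilon$-families at time $t>0$ is finite, uniformly in the initial condition, so $\mathrm{supp}(\pi_{1\ast}\mu_t)$ is totally bounded and hence compact.

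Item~(v), ergodicity, follows in case~(i) from the same coalescent duality: as $t\to\infty$ the dual reaches its stationary tree shape and the mutation kernel equilibrates on the finite label set, independently of $X_0$. Case~(ii) is genuinely more subtle; there I would couple two copies with different initial states using common resampling and mutation noise, so that the parent-independent mutations eventually synchronise the type assignments while resampling drives both tree components toward the Kingman equilibrium. The main obstacle I anticipate is the Girsanov step: one has to verify that the candidate density is a true martingale and not merely a local one, which requires a Novikov-type estimate using the uniform boundedness of $\chi'$ and careful control of the quadratic variation of the resampling martingale on $\Pi^1$-functionals.
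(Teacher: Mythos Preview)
This proposition is not proved in the present paper: the sentence preceding it states that it ``summarizes Theorems~1, 2 and~4 from \cite{DGP12}'' (together with the neutral construction in \cite{GPWmp}), and no argument is given beyond that citation. Your outline is therefore not competing with a proof in this paper but with the programme carried out in those references, and it matches that programme quite closely: duality with the Kingman coalescent for uniqueness in the neutral case, Moran-model tightness for existence, Girsanov for the passage to $\alpha>0$ and for item~(iv), carr\'e-du-champ for~(iii), and coming-down-from-infinity for~(ii).

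Two small points where your sketch would need tightening relative to what is actually done there. First, in the carr\'e-du-champ computation you only mention that $\Omega^{\mathrm{grow}}$ and $\Omega^{\mathrm{mut}}$ cancel; you should also note that $\Omega^{\mathrm{sel}}$ is first order in the same sense (it acts by inserting one extra sampled individual) and hence contributes nothing to $\Omega(\Phi^2)-2\Phi\,\Omega\Phi$ either --- otherwise the formula in~(iii) would pick up an $\alpha$-dependent term. Second, your lift from ``$t\mapsto\Phi(X_t)$ continuous for all $\Phi\in\Pi^1$'' to ``$t\mapsto X_t$ continuous in $\mathbb U_A$'' is correct, but only because the marked Gromov-weak topology is \emph{defined} as the initial topology generated by these polynomials and $\mathbb U_A$ is Polish; the phrase ``convergence-determining'' alone (which concerns weak convergence of laws, not pointwise convergence) does not justify it. Your concern about the Girsanov density being a true martingale is exactly the point addressed in \cite{DGP12}, where boundedness of $\chi'$ is used in the way you anticipate.
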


\begin{definition}[Tree-valued Fleming--Viot process and marked Kingman
  measure tree]\mbox{}%
  Using \label{def:treev} the same notation as in
  Proposition~\ref{P:main}, we call the process $\mathcal X$ the
  \emph{tree-valued Fleming--Viot process} and in the case $\alpha=0$
  its ergodic limit
  \begin{equation}\label{ag2}
    X_\infty =  \overline{(U_\infty, r_\infty, \mu_\infty)}
  \end{equation}
  is called \emph{Kingman marked measure tree}.
\end{definition}

\begin{remark}[The Kingman measure tree]\mbox{}\\
  The random variable $X_\infty$ arises from the marked ultrametric measure
  space which is associated with the partition-valued entrance law of the
  Kingman coalescent \cite{GPWmetric09}.
\end{remark}

\begin{example}[The quadratic variation of
  $(\Psi^{12}_\lambda(X_t))_{t\geq 0}$]\mbox{}\\
  In \label{rem:qv} some of the proofs, we will need to compute the quadratic
  variation of $\Phi(\mathcal X) \coloneqq (\Phi(X_t))_{t\geq 0}$ for specific
  $\Phi\in\Pi^1$ via (\ref{eq:qv1}) explicitly. For $\Psi^{12}_\lambda$ as in
  Example~\ref{rem:intPhi}, we have by~\eqref{eq:qv1}
  \begin{equation}\label{eq:qv3}
    \begin{aligned}
      [\Psi^{12}_\lambda(\mathcal X)]_t & = \int_0^t
      \bigl(\Psi^{12,23}_\lambda(X_s) - \Psi^{12,34}_\lambda(X_s)\bigr)\, ds,
    \end{aligned}
  \end{equation}
  with (cf. Definition~\ref{def:PsiPhi})
  \begin{equation}\label{eq:qv4}
    \begin{aligned}
      \Psi^{ij,kl}_\lambda\bigl(\overline{(U,r,\mu)}\bigr) & = \big\langle
      \mu^{\otimes \N} , e^{-\lambda (r(u_i,u_j) + r(u_k,u_l))}\big\rangle, \qquad
      i,j,k,l=1,2,\dots
    \end{aligned}
  \end{equation}
\end{example}

\section{Results}
\label{S:res}
Our main goal is to establish almost sure properties of the paths of
the tree-valued Fleming--Viot process, beyond continuity of paths and
the property that the states are compact marked metric measure space
for every $t>0$, almost surely. We start by studying the geometry of
the marked metric measure tree at time $t$ of the tree-valued
Fleming--Viot process. First we recall in Section~\ref{ss.geoprop}
some well-known facts concerning the geometry of the Kingman
coalescent and then extend them in Section~\ref{sss.pptv} to the
tree-valued Fleming--Viot process. In Section~\ref{ss:respath} we take
advantage of our results and techniques and state some further path
properties of the tree-valued Fleming--Viot process answering two open
questions.

\subsection{Geometric properties of the Kingman coalescent near the
  leaves}
\label{ss.geoprop}
We focus on the Kingman marked measure tree $X_\infty$ introduced in
Proposition~\ref{P:main}.5, but for most assertions in this subsection
we can ignore the marks (i.e.\ think of $A$ consisting of only one
element). Since the introduction of the partition-valued Kingman
coalescent in \cite{Kingman1982a}, this random tree has been studied
extensively for instance in \cite{Ald99} and \cite{Ev00} -- see
  also \cite{MR2534485}. In our present formalism (using metric
measure spaces), $X_\infty$ appeared first in \cite{GPWmetric09}. In
this section, we mostly reformulate known results, but also add a new
one in Proposition~\ref{P.fluc}.

The Kingman measure tree, $X_\infty$, arises from the partition-valued
Kingman coalescent, but can also be realized as a discrete graph tree
using the following construction (see also Figure~\ref{fig:not}). Let
$S_2, S_3,\dots$ be independent exponentially distributed random
variables with parameter~1. Starting with two lines from the root the
tree stays with these two lines for time $S_2/\binom 2 2$. At time
$S_2$ one of the two lines chosen at random splits in two, such that
three lines are present. In general after the jump from $k-1$ to $k$
lines the tree stays with that $k$ lines for a period of time
$S_k/\binom k 2$ and then one of the $k$ lines chosen at random
splits, such that there are $k+1$ lines. The total tree height is thus
$T_1$, where $ T_n \coloneqq
S_{n+1}/\binom{n+1}2+S_{n+2}/\binom{n+2}2+\cdots$, i.e.\ $T_n$ is the
time it takes the coalescent to go from $n$ to infinitely many lines.
The time of the root is called the time of the most recent common
ancestor (MRCA) and $T$ is the present time of the population. In
order to derive the Kingman \emph{marked} metric measure tree,
consider the uniform distribution on the branches and construct a
tree-indexed Markov process, by using a collection of independent
mutation processes as follows. Start with an equilibrium value of the
mutation processes at the root up to the next splitting time where we
continue with two independent mutation processes both starting from
the type in the vertex, etc.\ Running from the root to the leaves and
letting time approach $T$ we finally obtain $X_\infty$.

\begin{figure}
  \centering
  \includegraphics[width=0.85\textwidth]{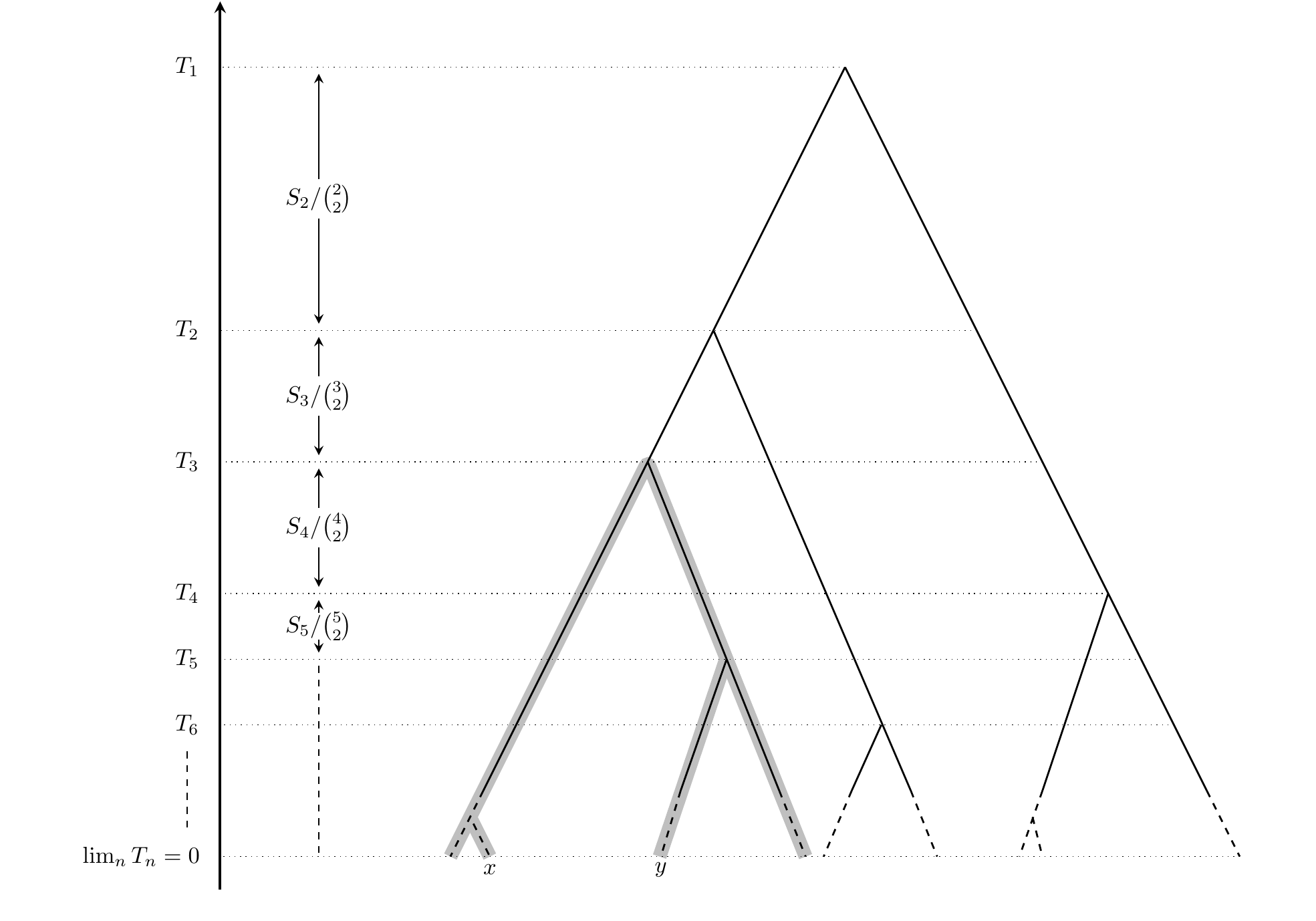}
  \caption{A construction of the Kingman measure tree
    $(X_\infty,r_\infty,\mu_\infty)$ without marks. In the ``dashed
    region'' the tree comes down from infinitely many lines at the
    treetop (time $0$) to six lines at time $T_6$. We have
    $r_\infty(x,y) = T_3$. The thick grey sub-tree is the closed and
    open ball of radius $T_3$ around $x$ and around $y$. The balls
    coincide because $r_\infty$ is an ultrametric.}
  \label{fig:not}
\end{figure}

At time $\varepsilon$ (counted from the top of the tree, for
$\varepsilon<T_1$), a random number $N_\varepsilon$ of lines are
present. Equivalently, $N_\varepsilon$ is the minimal number of
$\varepsilon$-balls needed to cover (the leaves of) the
random tree $X_\infty$. It is a well-known fact using de Finetti's
Theorem that the frequency of the family descending from every of the
$N_\varepsilon$ lines can be defined for all $\varepsilon>0$. In
addition, these frequencies are distributed as the spacings between
$N_\varepsilon$ on $[0,1]$ uniformly distributed random variables
\cite{Pyke1965}.

>From these considerations several results on the geometry of
$X_\infty$ near the leaves can be derived. We briefly recall and
extend some of them and reprove them later in our setting. Roughly we
will show that there are $2/\ve\pm \mathcal
O(1/\sqrt{\varepsilon})$-many families in which the genealogical
distance between the individuals is at most
$\ve$. Furthermore, each of the families has mass of
order $\ve$, as $\ve \to 0$. More precisely, the distribution of (by
$\ve$ rescaled) family sizes is exponential with rate~2.

We split the above picture in two parts. First we study the number of
families and then their size in both cases looking at a LLN and then
at a CLT. We begin with a law of large numbers and a central limit
theorem for $N_\varepsilon$ (see (35) in \cite{Ald99}). Our proofs are
given in Sections~\ref{ss.proofPcover} and~\ref{s.proofPcover2}.

\begin{proposition}[LLN for the number of balls to cover
  $X_\infty$]\mbox{}\\
  Let \label{P:cover} $X_\infty = \overline{(U_\infty, r_\infty,
    \mu_\infty)}$ be the Kingman measure tree. Moreover, let
  $N_\varepsilon$ be the (minimal) number of $\varepsilon$-balls
  needed to cover $(U_\infty, r_\infty)$. Then
  \begin{align}\label{eq:cover1}
    \varepsilon N_\varepsilon \xrightarrow{\varepsilon\to 0}2, \; \text{ almost
      surely.}
  \end{align}
\end{proposition}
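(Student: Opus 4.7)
The argument links $N_\varepsilon$ to the times $T_n$ at which the Kingman coalescent is reduced to $n$ lines, and then establishes an almost-sure law of large numbers of the form $nT_n\to 2$.

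First I would observe that, because $r_\infty$ is ultrametric, the minimal $\varepsilon$-cover of $U_\infty$ is the unique partition into closed $\varepsilon$-balls, and two leaves lie in a common ball iff their most recent common ancestor sits at height at most $\varepsilon$ from the leaves. Combined with the coalescent construction of $X_\infty$ recalled before the statement, this identifies $N_\varepsilon$ with the number of lines present at height $\varepsilon$ from the leaves. With $T_n = \sum_{k>n} S_k/\binom{k}{2}$ denoting the first time (measured from the leaves) at which only $n$ lines remain, one has
\[
N_\varepsilon = n \;\Longleftrightarrow\; T_n \le \varepsilon < T_{n-1},
\]
so that $N_\varepsilon T_{N_\varepsilon} \le \varepsilon N_\varepsilon < N_\varepsilon T_{N_\varepsilon-1}$; since $\varepsilon\to 0$ forces $N_\varepsilon\to\infty$ almost surely, the claim reduces to proving $nT_n\to 2$ a.s.

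Next I would compute first and second moments. The telescoping identity $\frac{2}{k(k-1)}=\frac{2}{k-1}-\frac{2}{k}$ yields $\mathbb E[T_n] = \sum_{k>n}\tfrac{2}{k(k-1)} = \tfrac{2}{n}$, and since the $S_k$ are i.i.d.\ $\mathrm{Exp}(1)$, one finds $\mathrm{Var}(T_n) = \sum_{k>n} 4/(k(k-1))^2 = O(n^{-3})$. Hence $\mathrm{Var}(nT_n) = O(1/n)$ and Chebyshev gives $\mathbb P(|nT_n-2|>\delta) = O(1/(n\delta^2))$. This bound is not summable in $n$, but along the subsequence $n_k := k^2$ it becomes $O(1/(k^2\delta^2))$, so Borel--Cantelli delivers $n_k T_{n_k}\to 2$ almost surely.

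Finally I would fill in the gaps by monotonicity. Since $n\mapsto T_n$ is non-increasing, for $n\in[n_k,n_{k+1}]$ we have
\[
\frac{n_k}{n_{k+1}}\bigl(n_{k+1}T_{n_{k+1}}\bigr) \;\le\; nT_n \;\le\; \frac{n_{k+1}}{n_k}\bigl(n_k T_{n_k}\bigr),
\]
and $n_{k+1}/n_k = (1+1/k)^2\to 1$, so both bounds converge almost surely to $2$; thus $nT_n\to 2$ a.s. Substituting this back into the sandwich from the first step, and noting that $nT_{n-1}=\tfrac{n}{n-1}(n-1)T_{n-1}\to 2$ as well, concludes $\varepsilon N_\varepsilon\to 2$ almost surely. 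The main obstacle is upgrading the $O(1/n)$ Chebyshev bound to almost-sure convergence along the full sequence: the second-moment estimate is not summable, so the key is to pick a subsequence whose gaps shrink fast enough that the monotonicity of $T_n$ still yields a two-sided sandwich, with $n_k=k^2$ being the simplest workable choice.
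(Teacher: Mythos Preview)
Your argument is correct. The overall strategy---reduce to $nT_n\to 2$ via the equivalence between $N_\varepsilon$ and the coalescent times, then use Borel--Cantelli---is the same as the paper's, but the implementation differs. The paper computes the fourth moment $\mathbf E[(T_n-2/n)^4]=\tfrac{16}{9n^6}(1+O(1/n))$ and applies Markov's inequality at fourth order to get directly $\mathbf P(|nT_n-2|>\varepsilon)\lesssim 1/(\varepsilon^4 n^2)$, which is summable in $n$; Borel--Cantelli then gives the result along the full sequence without any subsequence step. You instead stop at the variance, obtain the non-summable bound $O(1/n)$, and recover almost-sure convergence by passing to the subsequence $n_k=k^2$ and sandwiching via the monotonicity of $n\mapsto T_n$. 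Your route is more elementary in that it only needs the second moment, at the cost of the extra subsequence-plus-monotonicity step; the paper's route is shorter once the fourth moment is in hand, and that moment is computed anyway because it is reused for the CLT and for the path-level version of the result.
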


\begin{proposition}[CLT for the number of balls to cover
  $X_\infty$]\mbox{}\\
  With \label{P:cover2} the same notation as in
  Proposition~\ref{P:cover} and $Z\sim N(0,1)$,
  \begin{align}\label{eq:Neps3}
    \frac{N_\varepsilon -
      2/\varepsilon}{\sqrt{2/(3\varepsilon)}}\xRightarrow{\varepsilon\to
      0} Z \intertext{and} \label{eq:Neps4}\mathbb
    E\Big[\Big(\frac{N_\varepsilon -
      2/\varepsilon}{\sqrt{2/(3\varepsilon)}}\Big)^2\Big]
    \xrightarrow{\varepsilon\to 0} 1,&& \mathbb
    E\Big[\Big(\frac{N_\varepsilon -
      2/\varepsilon}{\sqrt{2/(3\varepsilon)}}\Big)^4\Big]
    \xrightarrow{\varepsilon\to 0} 1.
  \end{align}
\end{proposition}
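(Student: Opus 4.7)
The plan is to exploit the identity $\{N_\varepsilon > n\} = \{T_n > \varepsilon\}$, which holds because $T_n$ is strictly decreasing in $n$, almost surely. Since $T_n = \sum_{k\geq n+1} S_k/\binom{k}{2}$ is a sum of independent exponentials, every question about $N_\varepsilon$ reduces to a question about such a sum of independent random variables. I would first establish a CLT for $T_n$, then invert it to obtain \eqref{eq:Neps3}, and finally promote distributional convergence to moment convergence via uniform integrability.

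For the CLT for $T_n$, note that $\mathbb E[T_n] = \sum_{k>n} \binom{k}{2}^{-1} = 2/n$ by a telescoping sum, and that $\sigma_n^2 := \mathrm{Var}(T_n) = \sum_{k>n} \binom{k}{2}^{-2} \sim 4/(3n^3)$. The centered summands $(S_k-1)/\binom{k}{2}$ are independent with all finite moments and the Lindeberg ratio $\max_{k>n} \binom{k}{2}^{-1}/\sigma_n = O(n^{-1/2}) \to 0$, so Lyapunov's CLT yields $(T_n - 2/n)/\sigma_n \Rightarrow Z$. For the inversion step, fix $x\in\mathbb R$ and set $n_\varepsilon := \lfloor 2/\varepsilon + x\sqrt{2/(3\varepsilon)}\rfloor$. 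A Taylor expansion gives $\varepsilon - 2/n_\varepsilon \sim x\varepsilon^{3/2}/\sqrt{6}$ and $\sigma_{n_\varepsilon} \sim \varepsilon^{3/2}/\sqrt{6}$, hence $(\varepsilon - 2/n_\varepsilon)/\sigma_{n_\varepsilon} \to x$. Combining with the CLT for $T_n$ yields
\[
\mathbb P\Big(\tfrac{N_\varepsilon - 2/\varepsilon}{\sqrt{2/(3\varepsilon)}} > x\Big) = \mathbb P(T_{n_\varepsilon} > \varepsilon) = \mathbb P\Big(\tfrac{T_{n_\varepsilon} - 2/n_\varepsilon}{\sigma_{n_\varepsilon}} > \tfrac{\varepsilon - 2/n_\varepsilon}{\sigma_{n_\varepsilon}}\Big) \xrightarrow{\varepsilon\to 0} \mathbb P(Z>x),
\]
which is \eqref{eq:Neps3}.

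For the moment convergence \eqref{eq:Neps4}, I would compute the even moments of $T_n - 2/n$ directly. Because the summands are independent and centered, expanding $(T_n - 2/n)^{2m} = \bigl(\sum_{k>n}(S_k-1)/\binom{k}{2}\bigr)^{2m}$ and using that $\mathbb E[(S_k-1)^j]$ is uniformly bounded in $k$ shows that $\mathbb E[((T_n - 2/n)/\sigma_n)^{2m}]$ is uniformly bounded in $n$, with leading contribution the Gaussian moment $(2m-1)!!$ and subdominant terms decaying by factors of order $\max_{k>n}\binom{k}{2}^{-1}/\sigma_n = O(n^{-1/2})$. Via the identity above and Chebyshev's inequality this produces, uniformly in $\varepsilon$, a tail bound $\mathbb P\bigl(|N_\varepsilon - 2/\varepsilon|/\sqrt{2/(3\varepsilon)} > x\bigr) \lesssim x^{-2m}$. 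Choosing $m$ large gives uniform integrability of the fourth (and second) power, and the distributional convergence then lifts to the claimed moment convergence.

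The main obstacle is the careful control of the standardized even moments $\mathbb E[((T_n - 2/n)/\sigma_n)^{2m}]$, namely isolating the leading Gaussian term in the multinomial expansion from the subdominant cross terms and showing the latter vanish uniformly as $n\to\infty$; once these moment bounds are in place, the passage from $T_n$ to $N_\varepsilon$ is routine and the convergence of the second and fourth moments in \eqref{eq:Neps4} follows.
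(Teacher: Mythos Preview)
Your approach is essentially the paper's: establish the CLT for $T_n$ via Lindeberg/Lyapunov, invert through $\{N_\varepsilon>n\}=\{T_n>\varepsilon\}$ with $n_\varepsilon=\lfloor 2/\varepsilon+x\sqrt{2/(3\varepsilon)}\rfloor$, and upgrade to moment convergence via tail estimates coming from high moments of $T_n-2/n$. The paper uses sixth moments where you use a generic $2m$, but the mechanism is the same.

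There is, however, a real gap in your uniform tail bound. The claim $\mathbb P\bigl(|N_\varepsilon-2/\varepsilon|/\sqrt{2/(3\varepsilon)}>x\bigr)\lesssim x^{-2m}$ does \emph{not} hold uniformly on the negative side. Writing $n=a_\varepsilon(-x)$, Markov gives
\[
\mathbb P(T_n\le\varepsilon)\le\frac{\mathbb E[(T_n-2/n)^{2m}]}{|\varepsilon-2/n|^{2m}}
\ \sim\ \frac{C_m}{x^{2m}\,(1-x\sqrt{\varepsilon/6})^{m}},
\]
since $|\varepsilon-2/n|\sim x\,\sigma_n$ only up to the factor $(1-x\sqrt{\varepsilon/6})^{1/2}$ and $\sigma_n$ itself carries a factor $(1-x\sqrt{\varepsilon/6})^{-3/2}$. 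As $x\uparrow\sqrt{6/\varepsilon}$ (equivalently $n\downarrow$), this bound blows up and $\int x^3\cdot(\dots)\,dx$ diverges for any $m\ge1$. So high moments alone do not give uniform integrability of the fourth power.

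The paper resolves this by splitting the negative range at $c/\sqrt{\varepsilon}$ for some $c<\sqrt{6}$: on $[0,c/\sqrt{\varepsilon}]$ the factor $(1-x\sqrt{\varepsilon/6})^{-m}$ is bounded and the sixth-moment bound gives an integrable dominating function; on $[c/\sqrt{\varepsilon},\sqrt{6/\varepsilon}]$ it uses instead an exponential Chebyshev bound based on $\mathbf E[e^{-\lambda T_n}]\lesssim e^{-\frac43(\frac{\lambda}{n}\wedge\frac{\sqrt\lambda}{2})}$ (Lemma~\ref{l:momentsTn}) to show that this boundary contribution vanishes. You need an analogous separate treatment of the extreme left tail; once that is in place, your argument goes through.
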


\vspace{3ex}

We now come to the \emph{family structure} of $X_\infty =
\overline{(U_\infty,r_\infty,\mu_\infty)}$ close to the leaves. For
$\varepsilon>0$, we define
$B_{\varepsilon}(1),\dots,B_{\varepsilon}(N_\varepsilon) \subseteq
U_\infty$ as the disjoint balls of radius $\varepsilon$ that cover
$U_\infty$ and the corresponding frequencies by
\begin{equation}\label{ag3}
  F_i(\varepsilon) \coloneqq \mu_\infty(B_{\varepsilon}(i)), \;
  i=1,\dots,N_\varepsilon.
\end{equation}
Recall that in an ultrametric space two balls of the same radius are
either equal or disjoint (see also Figure~\ref{fig:not}). Therefore,
the vectors $(F_1(\varepsilon), \dots, F_{N_\varepsilon}(\varepsilon
))$ above are defined in a unique way. It can be viewed as the
frequency vector of a sequence of exchangeable random variables and we
can ask for the law of the empirical distribution of the scaled masses
in the limit $\ve \to 0$, where the underlying sequence, even if
scaled, becomes i.i.d.\ and we should get the scaled law of a single
scaled $F_i$. It turns out, a first step (cf.\ Remark~\ref{R.refine})
is to see that the following law of large numbers holds, the proof of
which (together with the proof of Lemma~\ref{L.reform}) appears in
Section~\ref{S:proofT3}.

\begin{proposition}[Asymptotics of ball masses near the
  leaves]\mbox{}\\
  For \label{P:smallballs} $F_i(\varepsilon)$ as above,
  \begin{align}\label{eq:smallballs}
    \frac{1}{\varepsilon} \sum_{i=1}^{N_\varepsilon}
    F_i(\varepsilon)^2 \xrightarrow{\varepsilon\to 0} 1
  \end{align}
  almost surely.
\end{proposition}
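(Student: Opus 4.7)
The strategy is to prove the stronger statement
$$N_\varepsilon \sum_{i=1}^{N_\varepsilon} F_i(\varepsilon)^2 \xrightarrow{\varepsilon\to 0} 2 \quad \text{a.s.,}$$
and then divide by $\varepsilon N_\varepsilon$, which tends to $2$ a.s.\ by Proposition~\ref{P:cover}. The key ingredients are the Dirichlet/paintbox description of the family frequencies conditional on $N_\varepsilon$, a fourth-moment Borel--Cantelli estimate along the subsequence $\varepsilon_k=1/k$, and a monotonicity argument to pass to the continuous limit.

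\textbf{Reformulation and monotonicity.} Because $(U_\infty,r_\infty)$ is ultrametric, two points lie in the same $\varepsilon$-ball iff their distance is at most $\varepsilon$, so
$$S(\varepsilon) := \sum_{i=1}^{N_\varepsilon} F_i(\varepsilon)^2 \;=\; \mu_\infty^{\otimes 2}\bigl(\{(u,v): r_\infty(u,v)\le\varepsilon\}\bigr).$$
In particular $\varepsilon \mapsto S(\varepsilon)$ is non-decreasing almost surely; this will let us extend a.s.\ convergence from the subsequence $\varepsilon_k=1/k$ to all $\varepsilon\downarrow 0$, since $\varepsilon_{k+1}/\varepsilon_k\to 1$ yields
$$\frac{\varepsilon_{k+1}}{\varepsilon_k}\cdot\frac{S(\varepsilon_{k+1})}{\varepsilon_{k+1}} \;\le\; \frac{S(\varepsilon)}{\varepsilon} \;\le\; \frac{\varepsilon_k}{\varepsilon_{k+1}}\cdot\frac{S(\varepsilon_k)}{\varepsilon_k} \qquad \text{for } \varepsilon\in[\varepsilon_{k+1},\varepsilon_k].$$

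\textbf{Conditional moments.} As invoked in the paper, conditional on $N_\varepsilon=n$ the frequencies $(F_1,\dots,F_n)$ are distributed as spacings of $n-1$ i.i.d.\ uniforms on $[0,1]$, equivalently as $\mathrm{Dirichlet}(1,\ldots,1)$. Representing $F_i=E_i/(E_1+\cdots+E_n)$ with i.i.d.\ $\mathrm{Exp}(1)$ random variables $E_i$ gives
$$n\sum_{i=1}^n F_i^2 \;=\; \frac{n^{-1}\sum_{i=1}^n E_i^2}{(n^{-1}\sum_{i=1}^n E_i)^2},$$
which converges to $2$ by the SLLN, and explicit Dirichlet-moment computations (or CLT-style bounds, using that $\mathrm{Exp}(1)$ has all moments) give
$$\mathbb{E}\bigl[(n \textstyle\sum_i F_i^2 - 2)^4\,\big|\,N_\varepsilon=n\bigr] \;=\; O(n^{-2}).$$
Taking expectations and using Proposition~\ref{P:cover2} to bound $\mathbb{E}[N_\varepsilon^{-2}]=O(\varepsilon^2)$, we obtain
$$\mathbb{E}\bigl[(N_\varepsilon S(\varepsilon)-2)^4\bigr] \;=\; O(\varepsilon^2).$$
Along $\varepsilon_k=1/k$ this is $O(k^{-2})$, so Markov's inequality together with Borel--Cantelli yields $N_{\varepsilon_k}S(\varepsilon_k)\to 2$ a.s.; combined with Proposition~\ref{P:cover} and the sandwich above, $S(\varepsilon)/\varepsilon\to 1$ a.s.

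\textbf{Main obstacle.} A naive $L^2$ approach fails: an explicit computation on the four-point Kingman coalescent shows that $\mathbb{E}\bigl[(S(\varepsilon)/\varepsilon)^2\bigr]\to 3\ne 1$, so Chebyshev applied directly to $S(\varepsilon)/\varepsilon$ never forces almost-sure convergence. The crucial trick is the factorisation $S(\varepsilon)/\varepsilon = (N_\varepsilon S(\varepsilon))/(\varepsilon N_\varepsilon)$, which isolates the residual $L^2$ fluctuation inside the slow random normalisation $\varepsilon N_\varepsilon$ (handled by Proposition~\ref{P:cover}); conditional on $N_\varepsilon$, the Dirichlet concentration is strong enough to drive $N_\varepsilon S(\varepsilon)\to 2$ via a 4th-moment Borel--Cantelli along a subsequence, after which monotonicity fills in the remaining $\varepsilon$.
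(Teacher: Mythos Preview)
Your main argument is correct and takes a genuinely different route from the paper. The paper passes to the Laplace transform $\Psi^{12}_\lambda(X_\infty)$ via the Tauberian Lemma~\ref{L.reform}, computes the fourth moment $\mathbf E\bigl[((\lambda+1)\Psi^{12}_\lambda-1)^4\bigr]=O(\lambda^{-2})$ using the generator machinery of Section~\ref{S:prep}, applies Borel--Cantelli along integer $\lambda$, and sandwiches using the monotonicity of $\lambda\mapsto\Psi^{12}_\lambda$. You instead work directly with $S(\varepsilon)$, exploit the Dirichlet law of the family frequencies conditional on $N_\varepsilon$, and factor $S(\varepsilon)/\varepsilon=(N_\varepsilon S(\varepsilon))/(\varepsilon N_\varepsilon)$. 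Your route is more elementary and self-contained (no Tauberian theorem, no generator computations), while the paper's route has the advantage that the same Laplace-transform machinery is reused for the path-level Theorems~\ref{T3}--\ref{T6}.

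Two remarks. First, your ``main obstacle'' diagnosis is mistaken: a direct four-coalescent computation gives $\mathbf E[S(\varepsilon)^2]=\mathbf P(R_{12}\le\varepsilon,\,R_{34}\le\varepsilon)=\varepsilon^2+O(\varepsilon^3)$, so $\mathbf E[(S(\varepsilon)/\varepsilon)^2]\to 1$, not~$3$; equivalently, the paper's~\eqref{eq:43} shows $\mathbf E[((\lambda+1)\Psi^{12}_\lambda-1)^2]\sim 1/(2\lambda)$. The genuine reason one needs fourth moments is that this $O(\varepsilon)$ variance is not summable along $\varepsilon_k=1/k$, so Chebyshev alone does not feed Borel--Cantelli. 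Second, the step $\mathbf E[N_\varepsilon^{-2}]=O(\varepsilon^2)$ does not follow from Proposition~\ref{P:cover2} by mere citation; you need a short extra argument, e.g.\ split on $\{\varepsilon N_\varepsilon\ge 1\}$ (where $(\varepsilon N_\varepsilon)^{-2}\le 1$) and bound the complement via $\mathbf P(\varepsilon N_\varepsilon<1)\le C\varepsilon^2$ from the fourth-moment convergence in~\eqref{eq:Neps4}, together with the crude bound $N_\varepsilon\ge 1$.
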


\vspace{2ex}

\sloppy The classical proof of Proposition~\ref{P:smallballs} uses the
fact that the random vector $\bigl(F_1(\varepsilon),\dots,
F_{N_\varepsilon}(\varepsilon)\bigr)$ has the same distribution as the
vector of spacings between $N_\varepsilon$ random variables uniformly
distributed on $[0,1]$. This vector in turn has the same distribution
as $\big(Y_1/(\sum_{i=1}^{N_\varepsilon} Y_i), \dots,
Y_{N_\varepsilon}/(\sum_{i=1}^{N_\varepsilon} Y_i)\big)$, where
$Y_1,Y_2,\dots$ are i.i.d.\ Exp$(1)$ random variables. Then, using a
moment computation,~\eqref{eq:smallballs} can be proved. For details
we refer to Section~2 in \cite{Ev00}. We will use a different route
for which we need the following auxiliary Tauberian result.

\begin{lemma}[Reformulation]\mbox{}\\
  The assertions \label{L.reform}
  \begin{align}\label{eq:Fit}
    & \frac{1}{\ve} \sum_{i=1}^{N_\ve}
    F_i(\ve)^2 \xrightarrow{\ve\to 0}1, \; \text{ a.s. }\\
    \label{eq:convPsi12}
    & (\lambda+1)\Psi^{12}_\lambda(X_\infty)
    \xrightarrow{\lambda\to\infty} 1\; \text{ a.s. }
  \end{align}
  with $\Psi^{12}_\lambda$ from Example~\ref{rem:intPhi} are
  equivalent. Moreover, the equivalence remains true if we replace
  $\varepsilon$ by $\varepsilon_n\downarrow 0$, $\lambda$ by
  $\lambda_n\uparrow \infty$ with $\varepsilon_n \lambda_n = 1$ and
  let $n\to\infty$.
\end{lemma}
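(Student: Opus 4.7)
The plan is to rewrite both assertions in terms of the (random) distribution of the distance between two $\mu_\infty$-samples and then reduce to a Karamata-type Tauberian correspondence. The ultrametric property of $(U_\infty,r_\infty)$ forces two points $u_1,u_2$ to lie in a common ball of radius $\varepsilon$ iff $r_\infty(u_1,u_2)\leq\varepsilon$. Letting $\nu_\infty$ denote the law of $r_\infty(U_1,U_2)$ with $U_1,U_2$ i.i.d.\ of law $\pi_1{}_\ast\mu_\infty$, we obtain the pathwise identities
\begin{align*}
G(\varepsilon) := \sum_{i=1}^{N_\varepsilon} F_i(\varepsilon)^2 = \nu_\infty([0,\varepsilon]), \qquad \Psi^{12}_\lambda(X_\infty) = \int_0^\infty e^{-\lambda r}\,\nu_\infty(dr) =: \widehat\nu_\infty(\lambda).
\end{align*}
Since $(\lambda+1)/\lambda\to 1$, assertion \eqref{eq:Fit} becomes $G(\varepsilon)/\varepsilon\to 1$ and \eqref{eq:convPsi12} becomes $\lambda\widehat\nu_\infty(\lambda)\to 1$. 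The lemma is thus a deterministic statement about the nondecreasing, right-continuous function $G$ (with $G(0)=0$, since two independent $\mu_\infty$-samples are a.s.\ distinct in the Kingman tree).

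Because $\nu_\infty$ has no atom at $0$, Fubini yields $\widehat\nu_\infty(\lambda)=\lambda\int_0^\infty e^{-\lambda r}G(r)\,dr$, and the substitution $s=\lambda r$ gives
\begin{align*}
\lambda\widehat\nu_\infty(\lambda) = \int_0^\infty se^{-s}\,\frac{G(s/\lambda)}{s/\lambda}\,ds.
\end{align*}
For the Abelian direction, assuming $G(\varepsilon)/\varepsilon\to 1$, the integrand converges pointwise to $se^{-s}$ and is dominated by $Mse^{-s}$ for some (random) $M<\infty$: on small arguments $G(\varepsilon)/\varepsilon$ is controlled by its limit, on arguments bounded away from $0$ by $1/\varepsilon_0$ using $G\leq 1$. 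Dominated convergence then delivers $\lambda\widehat\nu_\infty(\lambda)\to 1$. For the Tauberian direction I would invoke Karamata's Tauberian theorem (Feller Vol.~II, Section XIII.5) applied to the monotone $G$: $\widehat\nu_\infty(\lambda)\sim 1/\lambda$ as $\lambda\to\infty$ is equivalent to $G(\varepsilon)\sim\varepsilon/\Gamma(2)=\varepsilon$ as $\varepsilon\to 0^+$. Both sides being pathwise statements about $\nu_\infty(\omega)$, the almost-sure equivalence follows.

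The moreover clause, with $\varepsilon_n\downarrow 0$ and $\lambda_n=1/\varepsilon_n$, is handled in the same way: the Abelian direction respects sequential limits through dominated convergence, and for the Tauberian direction the monotonicity sandwich $G(\alpha\varepsilon_n)/\varepsilon_n\leq G(\varepsilon_n)/\varepsilon_n\leq G(\beta\varepsilon_n)/\varepsilon_n$ for any fixed $0<\alpha<1<\beta$, combined with Karamata applied along the sequence, suffices. The main obstacle I anticipate is the careful invocation of the Tauberian theorem in this inverted form (small-argument behaviour of $G$ from large-$\lambda$ behaviour of its Laplace transform) together with verifying pathwise measurability, so that the deterministic Tauberian correspondence truly applies to the random measure $\nu_\infty$. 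Neither is essentially new, however, and both issues are handled by the classical statements in Feller Vol.~II.
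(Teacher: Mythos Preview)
Your proposal is correct and follows essentially the same route as the paper: both rewrite $\sum_i F_i(\varepsilon)^2$ as $\nu_\infty([0,\varepsilon])$ for the pairwise-distance distribution $\nu_\infty = r_\infty(\cdot,\cdot)_\ast \mu_\infty^{\otimes 2}$, rewrite $\Psi^{12}_\lambda(X_\infty)$ as the Laplace transform of $\nu_\infty$, and then invoke the classical Tauberian theorem (Feller Vol.~II, Chapter~XIII.5, Theorem~3). Your write-up is in fact more detailed than the paper's, which simply cites Feller after making the reduction; your explicit Abelian argument via dominated convergence and the sequential sandwich are fine additions but not required.
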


\begin{remark}[Refinements of Proposition~\ref{P:smallballs}]\mbox{}\\
  Actually, \cite{Ald99} contains \label{R.refine} refinements of
  Proposition~\ref{P:smallballs}.
  \begin{enumerate}
  \item In equation~(35) of \cite{Ald99} it is claimed that
    (correcting a typo in Aldous' equation)
    \begin{align}\label{eq:refine1}
      \sup_{0\leq x < \infty}
      \Big|\frac{\varepsilon}{2}\sum_{i=1}^{N_\varepsilon} \ind{
        F_i(\varepsilon) < \varepsilon x} -
      (1-e^{-2x})\Big|\xrightarrow{\varepsilon\to 0} 0\; \text{ a.s.}
    \end{align} This means that the Kingman coalescent at distance
    $\ve$ from the tree top consists of approximately $2/\ve$
    families, and the size of a randomly sampled family has an
    exponentially distributed size with expectation $\ve/2$, in
    particular the rescaled empirical measure of the family
    sizes converges weakly to the exponential distribution with mean
    $2$, denoted by $\text{Exp}(1/2)$.

    In order to show this assertion using moments of
    $(F_i(\varepsilon))_{i=1,\dots,N_\varepsilon}$, it is necessary
    and sufficient that for $k=1,2,\dots$
    \begin{equation}\label{ag4}
      \frac{1}{\ve^{k-1}} \suml^{N_\ve}_{i=1} (F_i (\ve))^k \xrightarrow{\ve \to
        0} 2^{-(k-1)} k! \quad \text{a.s.}
    \end{equation}
    The sufficiency follows since the moment problem for the
    exponential distribution is well posed, while for the necessity,
    we assume that~\eqref{eq:refine1} holds, and then one concludes
    (recall the notation $\approx$ from Remark~\ref{not:aux})
    \begin{equation}\label{eq:refine2}
      \begin{aligned}
        \frac{1}{\varepsilon}\sum_{i=1}^{N_\varepsilon}
        F_i(\varepsilon)^2 & = \frac 2\ve \sum_{i=1}^{N_\varepsilon}
        \int_0^{ F_i(\varepsilon)} x\, dx = \frac 2\ve
        \sum_{i=1}^{N_\varepsilon} \int_0^\infty \ind{\ve x \leq
          F_i(\varepsilon)} x\, dx \\ & \stackrel{\varepsilon\to
          0}\approx \int_0^\infty 4x e^{-2x}\, dx = 1
      \end{aligned}
    \end{equation}
    as well as, for $k\geq 2$,
    \begin{equation}\label{eq:refine3}
      \begin{aligned}
        \frac{1}{\varepsilon^{k-1}}\sum_{i=1}^{N_\varepsilon}
        F_i(\varepsilon)^k & = \frac k{\ve^{k-1}}
        \sum_{i=1}^{N_\varepsilon} \int_0^{ F_i(\varepsilon)}
        x^{k-1}\,dx = k\ve \sum_{i=1}^{N_\varepsilon} \int_0^\infty
        \ind{\ve x \leq F_i(\varepsilon)} x^{k-1}\,dx \\ &
        \stackrel{\varepsilon\to 0}\approx \int_0^\infty 2 kx^{k-1}
        e^{-2x}\, dx = 2^{-(k-1)}k!.
      \end{aligned}
    \end{equation}
  \item The statement \eqref{eq:refine1} raises the issue to determine
    the fluctuations in that LLN, i.e.\ to derive a CLT. Here, (36) in
    \cite{Ald99} states that
    \begin{equation}\label{ag6}
      \sqrt{\frac{2}{\varepsilon}} \Big(\frac\varepsilon 2
      \sum_{i=1}^{N_\varepsilon}
      \ind{ F_i(\varepsilon)<\varepsilon x} -
      (1-e^{-2x})\Big)_{x\geq 0} \xRightarrow{\varepsilon\to 0}
      \big(B^0_{1-e^{-2x}} + \tfrac{1}{\sqrt 6}(1-e^{-2x})Z\big)_{x\geq 0},
    \end{equation}
    where $(B^0_t)_{0\leq t\leq 1}$ is a Brownian bridge.  Another,
    for us more suitable formulation is to consider the sum multiplied
    by $N^{-1}_\ve$ instead of $\ve/{2}$, so that $Z$ disappears on
    the right hand side. In this case one would consider the
    fluctuations of the empirical measure of masses of the
    $B(\ve)$-balls that cover the Kingman coalescent
    tree.
  \end{enumerate}
\end{remark}

We have so far investigated the behavior near the treetop looking at
the family sizes with respect to fixed degree $\ve$ of kinship for
$\ve \to 0$. This picture can be refined by obtaining fluctuation
results in~\eqref{eq:smallballs} (or~\eqref{eq:convPsi12}). We obtain
a partial result by considering a degree of kinship $\ve/t$ for $t$
varying in $\R_+$ and letting $\ve \to 0$. This gives a \emph{profile}
of the \emph{family sizes} of varying degrees of kinship and their
correlation structure close to the leaves, if we view the scaling
limit as a function of $t>0$. This profile should be the deterministic
flow of distributions $\{\text{Exp}(t/2): t >0\}$ which are the limits
of
\begin{equation}\label{ag1a}
   \Big(\frac{1}{N_{\ve/t}}\Big(\suml^{ N_{\ve/t}}_{i=1}
    \delta_{\ve^{-1} F_i(\ve /t)}\Big) \Big)_{t\geq 0}
\end{equation}
as $\varepsilon\to 0$. Again we consider the Laplace transform given
through $\Psi^{12}_\lambda$ and obtain the following fluctuation
result -- proved in Section~\ref{ss:proofT3}.

\begin{proposition}[Fluctuations of scaled small masses in small
  balls]\mbox{}\\
  Let \label{P.fluc} $X_\infty$ be the Kingman measure tree. Define
  the process $\mathcal Z^\lambda \coloneqq (Z^\lambda_t)_{t\geq 0}$
  by
  \begin{align}\label{eq:Psi12a}
    Z^\lambda_t \coloneqq \sqrt{\lambda}\big((\lambda t + 1)
    \Psi_{\lambda t}^{12}(X_\infty)-1\big).
  \end{align}
  Then every sequence $(Z^{\lambda_n})_{n\geq 0}$ with
  $\lambda_n\to\infty$ has a convergent subsequence
  $(\lambda_n')_{n\geq 0}$ with
  \begin{align}\label{eq:Psi12b}
    \mathcal Z^{\lambda_n'} \xRightarrow{n \to \infty} \mathcal Z,
  \end{align}
  for some process $\mathcal Z \coloneqq (Z_t)_{t>0}$ with continuous paths.
  Furthermore all limit points satisfy
  \begin{equation}
    \begin{aligned}
      \mathbf E[Z_t] & = 0,\\
      \mathbf {Var}[Z_t] & = \frac 2 t,\\
      \mathbf{Cov}(Z_s, Z_t) & = \frac{4st}{(s+t)^3},\\ \mathbf
      E[Z_t^3] & = 0,\\ \mathbf E[Z_t^4] & = \frac{3}{4t^2}.
    \end{aligned}
  \end{equation}
\end{proposition}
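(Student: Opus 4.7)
The plan is to establish (i) convergence of the relevant finite-dimensional moments of $(Z^\lambda_t)_{t>0}$ to the stated values, by evaluating Laplace-type polynomials at the equilibrium $X_\infty$, and (ii) tightness in $\mathcal C_{\R}([a,b])$ for every $[a,b]\subset (0,\infty)$ via a fourth-moment estimate on increments. Any weak limit extracted by tightness will then have continuous paths and will inherit the stated moments from~(i).

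For (i), the main tool is stationarity of $X_\infty$ under the tree-valued Fleming--Viot dynamics (Proposition~\ref{P:main}(5)), which gives $\mathbf E[\Omega \Phi(X_\infty)] = 0$ for every $\Phi\in\Pi^1$. I would apply this to the generalised Laplace polynomials
\begin{equation*}
\Phi^{\vec e}_{\vec \lambda}\bigl(\overline{(U,r,\mu)}\bigr) := \int \mu^{\otimes \N}\exp\Bigl(-\textstyle\sum_m \lambda_m\, r(u_{i_m},u_{j_m})\Bigr),
\end{equation*}
indexed by a tuple of index-pairs $\vec e=((i_m,j_m))_m$ and parameters $\vec\lambda$. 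The growth term contributes $-(\sum_m\lambda_m)\Phi^{\vec e}_{\vec\lambda}$, and the resampling term rewrites each $\Phi^{\vec e}_{\vec\lambda}\circ\theta_{k,\ell}$ as another functional of the same type involving at most the same number of distinct individuals. The stationarity identity thus closes on a finite linear system that determines the equilibrium expectations recursively in the number of distinct samples. In the base case $k=1$ one obtains $\mathbf E[\Psi^{12}_\mu(X_\infty)] = 1/(1+\mu)$, so $\mathbf E[(\lambda t+1)\Psi^{12}_{\lambda t}(X_\infty)] = 1$ exactly and hence $\mathbf E[Z^\lambda_t]=0$. For the second moment, the two-pair functional $F(s,t):=\mathbf E[\Psi^{12,34}_{(s,t)}(X_\infty)]$ reduces through the recursion to the three-sample quantity $G(s,t):=\mathbf E[\Psi^{12,13}_{(s,t)}(X_\infty)]$, which is explicit by conditioning on which of the three pairs forms the cherry in a three-sample Kingman coalescent. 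Substituting $s=\lambda s_0,\ t=\lambda t_0$ and expanding in $1/\lambda$ isolates the leading order and yields $\mathbf{Cov}(Z_{s_0},Z_{t_0})$, hence $\mathbf{Var}[Z_{t_0}]$ at $s_0=t_0$. The third moment vanishes by a symmetry argument on the pairwise distance weights, and the fourth moment follows from the analogous four-pair (eight-sample) computation.

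For (ii), note that $u\mapsto (\lambda u+1)\Psi^{12}_{\lambda u}(X_\infty)$ is smooth in $u$, so
\begin{equation*}
Z^\lambda_t - Z^\lambda_s = \lambda^{3/2}\int_s^t \partial_\mu\bigl[(\mu+1)\Psi^{12}_{\mu}(X_\infty)\bigr]\Big|_{\mu=\lambda u}\,du.
\end{equation*}
Raising to the fourth power, applying H\"older to the time integral, and bounding the pointwise fourth moment of the derivative via the same generator recursion (now with up to eight samples) would yield the uniform estimate $\mathbf E[(Z^\lambda_t-Z^\lambda_s)^4]\le C(a,b)\,|t-s|^2$ for all $s,t\in[a,b]$ and all sufficiently large $\lambda$. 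Kolmogorov's tightness criterion then gives the required compactness in $\mathcal C_{\R}([a,b])$ with continuous path limits, which combined with step~(i) concludes.

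The principal obstacle is the combinatorial complexity of the higher-moment recursions: the fourth moment requires tracking the finite family of distinct Laplace functionals on up to eight samples that mix under the resampling operator, and then carefully expanding them in powers of $1/\lambda$ while keeping the separate $s$- and $t$-dependence needed for the two-time statistics. The generator-based reduction is what makes the calculation tractable, replacing topology enumeration on the eight-leaf Kingman tree by a finite linear system; as the authors remark in the introduction, the symbolic manipulation can be carried out in {\sc Mathematica}.
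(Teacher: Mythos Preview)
Your overall architecture---equilibrium moments via the generator recursion $\mathbf E[\Omega\Phi(X_\infty)]=0$, then Kolmogorov--Chentsov tightness on compacta of $(0,\infty)$---is exactly the paper's strategy, and your description of the linear system on mixed Laplace polynomials matches their eigenvector (the $\Upsilon^k_\lambda$) machinery.

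There are two points where your proposal diverges from the paper, one of which is a genuine gap.

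\textbf{Tightness (a difference, not an error).} You differentiate in the Laplace parameter and bound $\mathbf E[|f'(\lambda u)|^4]$ with $f(\mu)=(\mu+1)\Psi^{12}_\mu$. This can presumably be made to work, but it forces you into test functions of the form $r_{ij}\,e^{-\mu r_{ij}}$, which lie outside the exponential algebra $\mathbb V_\lambda$ on which the generator acts triangularly; you would have to rebuild the recursion for this enlarged class. The paper avoids this entirely: it stays inside the exponential family by introducing \emph{two-parameter} functionals $\Psi^{12,34}_{\lambda,\lambda'}$ (and their $\Upsilon$-eigenvectors), computes $\mathbf E[\Psi^{12,34}_{s\lambda,t\lambda}(X_\infty)]$ explicitly, and obtains the \emph{second}-moment bound
\[
\sup_{\lambda>0}\,\mathbf E\bigl[(Z^\lambda_t-Z^\lambda_s)^2\bigr]\;\le\;C\,(t-s)^2,
\]
which already suffices for Kolmogorov--Chentsov. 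This is both shorter and reuses the same algebra as the moment calculations.

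\textbf{Third moment (a gap).} Your claim that $\mathbf E[Z_t^3]=0$ follows ``by a symmetry argument on the pairwise distance weights'' does not hold up. There is no distributional symmetry of $(\lambda t+1)\Psi^{12}_{\lambda t}(X_\infty)-1$ about~$0$, and indeed the paper's explicit computation gives
\[
\mathbf E[(Z^\lambda_t)^3]=\lambda^{3/2}\,\frac{16\,t^3\lambda^3(5t^2\lambda^2+9t\lambda-10)}{(t\lambda+2)(t\lambda+3)(t\lambda+5)(2t\lambda+1)(2t\lambda+3)(3t\lambda+1)(3t\lambda+10)},
\]
which is nonzero for finite $\lambda$ and of order $\lambda^{-1/2}$; only the \emph{limit} vanishes. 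So you must actually run the six-sample recursion (or the eigenvector calculus) to get this; a symmetry shortcut is not available.
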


\begin{remark}[Is $\mathcal Z$ Gaussian?]\mbox{}\\
  We conjecture that there is a unique limit process $\mathcal Z$ in
  Proposition~\ref{P.fluc}. Moreover, we note that $\mathbf {Var}[Z_t]$
  and $\mathbf E[Z^4_t]$ are in the relation if $Z_t \sim N(0,
  1/2t)$, which raises the question whether $\mathcal{Z}$ is a
  Gaussian process.
\end{remark}

\subsection{Path properties: the tree-valued Fleming--Viot process
  near the leaves}
\label{sss.pptv}
Although the Kingman measure tree, $X_\infty$, only arises as the
long-time limit of the neutral tree-valued Fleming--Viot process,
$\mathcal X = (X_t)_{t\geq 0}$, near the leaves, $X_t$ (for $t>0$) and
$X_\infty$ have similar geometry. The reason is that the structure
near the leaves of $X_\infty$ or $X_t$ only depends on resampling
events in the (very) recent past. Hence, we expect that the properties
of $X_\infty$ from Propositions~\ref{P:cover} and~\ref{P:smallballs}
hold along the paths of $\mathcal X$. This will be shown in
Theorem~\ref{T1} and Theorem~\ref{T3}, respectively.  Furthermore we
conjecture (but don't have a proof) that the more ambitious
refinements described in Remark~\ref{R.refine} (see~\eqref{ag4}) also
hold along the paths.  In addition, in the stationary regime
$X_0\stackrel d = X_\infty$, Theorems~\ref{T2} and \ref{T4} give two
results on convergence to a Brownian motion along the tree-valued
Fleming--Viot process.

The following theorem is proved in Section~\ref{ss.proofT1}.

\begin{theoremOwn}[Uniform convergence of $\varepsilon N_\varepsilon$
  along paths]\mbox{}\\
  Let \label{T1} $\mathcal X = (X_t)_{t\geq 0}$ with $X_t =
  \overline{(U_t,r_t,\mu_t)}$ be the tree-valued Fleming--Viot process
  (started in some $X_0\in\mathbb U_{A}$) and selection coefficient
  $\alpha\geq 0$. Moreover, let $N_\varepsilon^t$ be the number of
  $\varepsilon$-balls needed to cover $(U_t,r_t)$. Then,
  \begin{align}\label{eq:314}
    \mathbf P\bigl(\lim_{\varepsilon\to 0} \varepsilon N_\varepsilon^t = 2
    \text{ for all }t>0\bigr) = 1.
  \end{align}
\end{theoremOwn}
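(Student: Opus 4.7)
\emph{Plan.}

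\emph{Step 1 (Reductions).} By Proposition~\ref{P:main}.4 the laws $\mathbf P_\alpha$ and $\mathbf P_0$ are mutually absolutely continuous on every finite time interval, so I may restrict to the neutral case $\alpha=0$. Applying Lemma~\ref{L.reform} $\omega$-wise to $X_t$ for each fixed $t>0$, the statement $\ve N^t_\ve\to 2$ is equivalent to $(\lambda_n+1)\Psi^{12}_{\lambda_n}(X_t)\to 1$ along any diverging sequence $\lambda_n$. Fixing the deterministic sequence $\lambda_n=2^n$ and setting $Y^\lambda_t\coloneqq(\lambda+1)\Psi^{12}_\lambda(X_t)-1$, Theorem~\ref{T1} reduces, via a countable intersection over rational pairs, to showing that for every $0<t_0<T<\infty$,
\[
\sup_{t\in[t_0,T]}\bigl|Y^{\lambda_n}_t\bigr|\xrightarrow{n\to\infty} 0\qquad\mathbf P_0\text{-a.s.}
\]

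\emph{Step 2 (Langevin equation).} A direct computation with the generator~\eqref{dgpGir2} at $\alpha=0$ yields
\[
\Omega\Psi^{12}_\lambda \;=\; -\lambda\,\Psi^{12}_\lambda\;+\;(1-\Psi^{12}_\lambda)\;=\;1-(\lambda+1)\Psi^{12}_\lambda,
\]
because growth differentiates $e^{-\lambda r_{12}}$ to give $-\lambda\Psi^{12}_\lambda$, a resampling event between the two sampled points sets their distance to zero (giving $1-\Psi^{12}_\lambda$), and mutation contributes nothing since $\Psi^{12}_\lambda$ is type-independent. By Proposition~\ref{P:main}.3, the process $Y^\lambda$ solves the linear SDE $\mathrm dY^\lambda_t=-(\lambda+1)Y^\lambda_t\,\mathrm dt+(\lambda+1)\,\mathrm dG^\lambda_t$, whose explicit solution is
\[
Y^\lambda_t \;=\; e^{-(\lambda+1)t}Y^\lambda_0 \;+\; (\lambda+1)\!\int_0^t e^{-(\lambda+1)(t-s)}\,\mathrm dG^\lambda_s,
\]
with $G^\lambda$ the martingale part of $\Psi^{12}_\lambda(\mathcal X)$ and $[G^\lambda]_t$ given by~\eqref{eq:qv3}. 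The strong mean-reversion at rate $\lambda+1$ is the structural feature that will produce the required decay.

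\emph{Step 3 (Moment bound and Borel--Cantelli).} The deterministic piece is harmless: $|Y^\lambda_0|\le\lambda+1$, so $e^{-(\lambda+1)t_0}(\lambda+1)\to 0$ exponentially and uniformly on $[t_0,T]$. For the stochastic piece, the factorisation
\[
\Psi^{12,23}_\lambda(X)-\Psi^{12,34}_\lambda(X) \;=\; \operatorname{Var}_\mu(f_\lambda),\qquad f_\lambda(u)\coloneqq\int\!\mu(\mathrm dv)\,e^{-\lambda r(u,v)},
\]
exhibits the integrand of $[G^\lambda]$ as non-negative, and a three- and four-point moment computation (based on the explicit joint Laplace transforms of distances in the Kingman coalescent) together with exponential relaxation of the closed ODE satisfied by $s\mapsto\mathbf E[\Psi^{12,23}_\lambda(X_s)]$ and $s\mapsto\mathbf E[\Psi^{12,34}_\lambda(X_s)]$ gives $\mathbf E[\Psi^{12,23}_\lambda(X_s)-\Psi^{12,34}_\lambda(X_s)]=O(\lambda^{-2})$ uniformly for $s\ge t_0/2$. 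Plugging this into the convolution, and applying Doob's maximal inequality not to $Y^\lambda$ directly but to the time-changed martingale $\tilde M^\lambda_t\coloneqq\int_{t_0/2}^t e^{(\lambda+1)s}\,\mathrm dG^\lambda_s$ (so that the decaying exponential weight can be re-absorbed after taking the supremum), produces $\mathbf E\bigl[\sup_{t\in[t_0,T]}|Y^\lambda_t|^2\bigr]=o(1)$ with a rate in $\lambda$ that is summable along $\lambda_n=2^n$. Chebyshev and Borel--Cantelli then yield the almost-sure uniform convergence.

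\emph{Main obstacle.} The principal difficulty is promoting a pointwise $L^2$-bound $\mathbf E|Y^\lambda_t|^2=O(1/\lambda)$ into a bound on $\mathbf E\sup_{t\in[t_0,T]}|Y^\lambda_t|^2$ that still vanishes in $\lambda$. A naive Doob inequality for $Y^\lambda$ itself would produce useless exponential growth of order $e^{c(\lambda+1)T}$; the Langevin structure has to be exploited by first time-changing $G^\lambda$ to $\tilde M^\lambda$ and then pairing the decaying convolution weight with Doob's supremum. A secondary technical hurdle is the $O(\lambda^{-2})$ moment estimate on $\Psi^{12,23}_\lambda-\Psi^{12,34}_\lambda$ away from $t=0$, which does not follow directly from the stationary computation and requires exponential convergence of the three- and four-point moment ODEs to their Kingman-equilibrium values, uniformly in $\lambda$.
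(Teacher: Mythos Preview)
Your Step~1 contains a decisive error: Lemma~\ref{L.reform} does \emph{not} assert that $\ve N^t_\ve\to 2$ is equivalent to $(\lambda+1)\Psi^{12}_\lambda(X_t)\to 1$. What that lemma actually establishes is the equivalence between $(\lambda+1)\Psi^{12}_\lambda\to 1$ and $\tfrac{1}{\ve}\sum_{i=1}^{N_\ve}F_i(\ve)^2\to 1$, i.e.\ it concerns the \emph{sum of squared ball masses}, not the \emph{number of balls}. These are genuinely different quantities: $\sum_i F_i(\ve)^2=\mu^{\otimes 2}\{r\le\ve\}$ depends on how the measure distributes over the balls, while $N_\ve$ is a purely metric covering number. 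Your entire argument therefore targets an almost-sure strengthening of Theorem~\ref{T3} (see also the remark following it, where the paper spells out that controlling $(\lambda+1)\Psi^{12}_\lambda$ along the path would yield the family-size statement~\eqref{ag10} via Lemma~\ref{L.reform}), not Theorem~\ref{T1}.

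The paper's proof of Theorem~\ref{T1} does not go through $\Psi^{12}_\lambda$ at all. After the same reduction to $\alpha=0$ and then to the equilibrium start, it works directly with the dual times $T_n^t=\inf\{\ve>0:N_\ve^t\le n\}$ and shows $nT_n^t\to 2$ uniformly in $t\in[0,1]$. The ingredients are: (i) the $8$th-moment bound $\mathbf E[(T_n^t-2/n)^8]\lesssim n^{-12}$ from Lemma~\ref{l:momentsTn}; (ii) a union bound plus Borel--Cantelli on the discrete grid $\{k/n^2:0\le k\le n^2\}$, giving $\mathbf P(\sup_k|nT_n^{k/n^2}-2|>\ve)\lesssim 1/(\ve^8 n^2)$; and (iii) a deterministic interpolation between grid points via the pathwise inclusion $\{T_n^t>\ve\}\subseteq\{T_n^s>\ve-(t-s)\}$ for $t-\ve\le s\le t$ (and its mirror image), which reflects that ancestors of the time-$t$ population are also ancestors of the time-$s$ population.

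As a secondary point, even for the $\Psi^{12}_\lambda$-statement your Step~3 sketch does not close. With $\tilde M^\lambda_t=\int_{t_0/2}^t e^{(\lambda+1)s}\,\mathrm dG^\lambda_s$ one gets $\mathbf E[\tilde M^\lambda_T{}^2]\lesssim \lambda^{-3}e^{2(\lambda+1)T}$ from the $O(\lambda^{-2})$ bound on the quadratic-variation density; pairing this with the prefactor $(\lambda+1)^2e^{-2(\lambda+1)t_0}$ needed to dominate $\sup_{t\in[t_0,T]}|Y^\lambda_t|$ leaves a term of order $\lambda^{-1}e^{2(\lambda+1)(T-t_0)}$, which diverges. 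Controlling the supremum of a fast OU-type process over an interval of fixed length requires a finer argument (e.g.\ chaining or slicing into intervals of width $O(1/\lambda)$) than the single global time-change you propose.
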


While the fluctuations in Proposition~\ref{P:cover2} are dealing with
a fixed-time genealogy, we can view the fluctuations of the path
$(\varepsilon N_\varepsilon^t)_{t\geq 0}$ arising in Theorem~\ref{T1}
in the limit $\ve \to 0$.  This program is now carried out along the
tree-valued Fleming--Viot process.

In order to obtain a meaningful limit object, we consider time
integrals. It is important to understand that the part of the time-$t$
tree $X_t$ which is at most $\varepsilon$ apart from the treetop is
independent of $\mathcal F_s \coloneqq \sigma({X_r}: 0\leq r\leq s)$
as long as $s\leq t-\varepsilon$. The following is proved in
Section~\ref{ss.proofT2}.

\begin{theoremOwn}[A Brownian motion in the tree-valued Fleming--Viot
  process]\mbox{}\\
  Let \label{T2} $\mathcal X = (X_t)_{t\geq 0}$ with $X_t =
  \overline{(U_t, r_t, \mu_t)}$ be the neutral tree-valued
  Fleming--Viot process (i.e.\ $\alpha=0$) started in equilibrium,
  $X_0\stackrel d = X_\infty$, and $\mathcal B_\varepsilon =
  (B_\varepsilon(t))_{t\geq 0}$ given by
  \begin{align}
    \label{eq:344b}
    B_\varepsilon(t) := \sqrt \frac 32\int_0^t \Big(N_\varepsilon^s -
    \mathbf E[N_\varepsilon^\infty]\Big) ds.
  \end{align}
  Then,
  \begin{align}
    \label{eq:344c}
    \mathcal B_\varepsilon \xRightarrow{\varepsilon\to 0} \mathcal B,
  \end{align}
  where $\mathcal B = (B_t)_{t\geq 0}$ is a Brownian motion started in
  $B_0=0$.
\end{theoremOwn}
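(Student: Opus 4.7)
\textbf{Proof plan for Theorem~\ref{T2}.} The strategy is to establish convergence of finite-dimensional distributions to those of Brownian motion together with tightness in $\mathcal C_\R([0,\infty))$. Everything rests on the \emph{finite-range-of-dependence} property already emphasized in the text preceding the theorem: in the stationary regime, $N_\varepsilon^s$ equals the number of distinct ancestors at time $s-\varepsilon$ of the population alive at time $s$, hence is measurable with respect to the resampling events in the window $[s-\varepsilon,s]$. Consequently $(N_\varepsilon^s)_{s\geq 0}$ is a stationary process, and $N_\varepsilon^s$ is independent of $N_\varepsilon^t$ whenever $|s-t|>\varepsilon$.

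Setting $\sigma_\varepsilon(v):=\mathbf{Cov}(N_\varepsilon^0,N_\varepsilon^v)$, Fubini and stationarity give
\begin{equation*}
\mathbf{Var}\bigl(B_\varepsilon(t)\bigr) = 3\int_0^\varepsilon (t-v)\,\sigma_\varepsilon(v)\,dv = 3t\int_0^\varepsilon \sigma_\varepsilon(v)\,dv + O(\varepsilon),
\end{equation*}
since $\sigma_\varepsilon(v)=0$ for $v>\varepsilon$ and $|\sigma_\varepsilon(v)|\leq \mathbf{Var}(N_\varepsilon^\infty)\lesssim 1/\varepsilon$ by Proposition~\ref{P:cover2}. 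The crux is then to show $\int_0^\varepsilon \sigma_\varepsilon(v)\,dv\to 1/3$ as $\varepsilon\to 0$, which justifies the constant $\sqrt{3/2}$. I would obtain this through a coupling of the coalescents in $[-\varepsilon,0]$ and $[v-\varepsilon,v]$: the two share resampling events in the overlap $[v-\varepsilon,0]$ of length $\varepsilon-v$ and are independent outside of it, so the joint law of $(N_\varepsilon^0,N_\varepsilon^v)$ reduces to a Kingman-coalescent bookkeeping of the same type that produces Proposition~\ref{P:cover2}.

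Gaussianity of the one-dimensional limits follows from a block decomposition. Partition $[0,t]$ into alternating ``long'' blocks of length $h_\varepsilon$ and ``short'' gaps of length $\varepsilon$ with $\varepsilon\ll h_\varepsilon\ll 1$. By the finite-range property, the integrals of $N_\varepsilon^u-\mathbf E[N_\varepsilon^\infty]$ over the long blocks form an i.i.d.\ triangular array with variance of order $h_\varepsilon$ (by the computation of the preceding paragraph), while the total gap contribution has standard deviation $O(\sqrt{t\varepsilon})$ and is therefore negligible. Lindeberg--Feller then yields $B_\varepsilon(t)\Rightarrow N(0,t)$, and joint convergence of increments across several times to independent Brownian increments follows at once, since increments over intervals separated by more than $\varepsilon$ are \emph{exactly} independent for each finite $\varepsilon$. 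Tightness on $\mathcal C_\R([0,T])$ reduces, via Kolmogorov's criterion, to a 4th-moment estimate $\mathbf E\bigl[(B_\varepsilon(t)-B_\varepsilon(s))^4\bigr]\lesssim (t-s)^2$, which in turn is a consequence of the finite-range property together with the bound $\mathbf E\bigl[(N_\varepsilon^\infty-\mathbf E N_\varepsilon^\infty)^4\bigr]=O(1/\varepsilon^2)$ implicit in Proposition~\ref{P:cover2}.

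The main obstacle is the sharp evaluation of $\int_0^\varepsilon \sigma_\varepsilon(v)\,dv$: it requires a quantitative handle on the joint near-leaf geometry at two nearby times through the coupled coalescents, and it is here that the constant $\sqrt{3/2}$ is pinned down. Once this variance identity is in hand, the finite-range-of-dependence structure makes both the Gaussian limit and the tightness essentially routine.
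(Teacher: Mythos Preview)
Your outline is correct, and the architecture --- reduce to the variance identity $\int_0^\varepsilon \sigma_\varepsilon(v)\,dv\to 1/3$, then tightness via a fourth-moment Kolmogorov--Chentsov estimate --- is the same as the paper's. The difference lies in how the Gaussian limit is identified. You use a Bernstein-type block decomposition together with Lindeberg--Feller, exploiting that increments over intervals separated by more than $\varepsilon$ are genuinely independent; the paper instead shows that for any subsequential limit $\mathcal B$ both $(B_t)_{t\ge 0}$ and $(B_t^2-t)_{t\ge 0}$ are martingales (by computing $\mathbf E[B_\varepsilon(t)-B_\varepsilon(s)\mid\mathcal F_s]\to 0$ and $\mathbf E[(B_\varepsilon(t)-B_\varepsilon(s))^2\mid\mathcal F_s]\to t-s$ in $L^1$), and concludes by L\'evy's characterization. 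Your route is arguably more direct but requires checking the Lindeberg condition, which indeed follows from the fourth-moment bound you quote; the paper's route recycles the conditional first/second moment estimates and avoids any blocking.

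Where you leave a gap is precisely the ``main obstacle'' you flag: the actual computation of $\sigma_\varepsilon(v)=\mathbf{Cov}(N_\varepsilon^0,N_\varepsilon^v)$. Your description (``coupling of the coalescents \dots\ reduces to Kingman-coalescent bookkeeping'') is in the right spirit but not yet a proof. The paper carries this out by writing $N_\varepsilon^0=N_{\varepsilon-v}^0-(N_{\varepsilon-v}^0-N_\varepsilon^0)$, noting the second piece is independent of $N_\varepsilon^v$, and then, conditionally on $(N_{\varepsilon-v}^0,N_v^v)$, invoking the explicit formula $\mathbf E[K_i^N]=\tfrac{iN}{N+i-1}$ for the expected number of surviving lines of a sub-sample of size $N$ inside a Kingman tree with $i$ lines (from \cite{MR2851692}). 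Plugging in the Gaussian approximation of Proposition~\ref{P:cover2} yields $\sigma_\varepsilon(v)\sim \tfrac{2(\varepsilon-v)}{3\varepsilon^2}$, and integrating over $v\in[0,\varepsilon]$ gives $1/3$. Without this input (or an equivalent one), the constant $\sqrt{3/2}$ is not pinned down and your Lindeberg--Feller step lacks the variance normalization it needs.
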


\begin{remark}[Expectation of $N_\varepsilon^\infty$]\mbox{}\\
  In \eqref{eq:344b} one would rather like to replace $\mathbf
  E[N_\ve^\infty]$ by $2/\ve$ to measure the fluctuations around the
  limit profile, i.e.\ to consider $\widetilde{\mathcal B}_\varepsilon
  \coloneqq (\widetilde B_\varepsilon(t))_{t\geq 0}$ defined by
  \begin{align}
    \label{eq:8454}
    \widetilde B_\varepsilon (t):= \sqrt \frac 32\int_0^t
    \Big(N_\varepsilon^s - \frac 2 \varepsilon\Big) ds,
  \end{align}
  instead of $\mathcal B_\varepsilon$. We will see that
  $\widetilde{\mathcal B}_\varepsilon$ converges as $\varepsilon\to 0$
  to a Brownian motion $\widetilde {\mathcal B}$ with \emph{drift},
  but unfortunately we cannot identify the latter. Indeed, from
  Proposition~\ref{P:cover2}, in particular using boundedness of
  second moments, we see that, approximately, $\mathbf
  E[N_\varepsilon^\infty] \approx \frac 2 \varepsilon$ in the sense
  that $\varepsilon \cdot \mathbf E[N_\varepsilon^\infty]
  \xrightarrow{\varepsilon\to 0} 2$.  However, this only implies
  $\mathbf E[N_\varepsilon^\infty]=2/\varepsilon + o(1/\varepsilon)$
  and the error term can be large. In order to sharpen this expansion
  to $\mathbf E[N_\varepsilon^\infty] = 2/\ve + \mathcal O(1)$, we use
  results from~\cite{Tavare1984}. His Section~5.4 (with $\theta=0$ and
  $i=\infty$) yields
  \begin{equation}
    \label{eq:8452}
    \begin{aligned}
      \mathbf E[N_\varepsilon^\infty\cdots (N_\varepsilon^\infty-j+1)]
      & = \sum_{k=1}^\infty \rho_k(\varepsilon)(2k-1)
      \frac{(k-1)\cdots (k-j+1)\cdot k\cdots(k+j-2)}{(j-1)!},
    \end{aligned}
  \end{equation}
  with $\rho_k(\varepsilon) = \exp(-k(k-1)\varepsilon/2)$. From this,
  writing $\delta := \sqrt\varepsilon$ we also see that
  \begin{equation}
    \label{eq:8453}
    \begin{aligned}
      \mathbf E[N_{\varepsilon}^\infty] & =
      \frac{2}{\delta^2}\sum_{x\in\delta\mathbb N}
      \exp(-x(x-\delta)/2) (x-\delta/2)\delta \\ & =
      \frac{2}{\delta^2}\sum_{x\in\delta\mathbb N} \exp(-x^2/2)(1 +
      x\delta/2 + O(\delta^2))(x-\delta/2)\delta \\
      & = \frac{2}{\delta^2} \int_0^\infty xe^{-x^2/2} dx +
      \frac{1}{\delta} \int_0^x (x^2-1)e^{-x^2/2}dx
      + \mathcal O(1) \\
      & = \frac{2}{\varepsilon} + \mathcal O(1)
    \end{aligned}
  \end{equation}
  as $\varepsilon\to 0$. This, together with Theorem~\ref{T2}, implies
  that $\widetilde{\mathcal B}_\varepsilon$ is of the form
  \begin{align}\label{ad900}
    \widetilde B_\varepsilon (t) = B_\varepsilon (t) + \mathcal O(1) t
    \quad \text{ as } \varepsilon \to 0,
  \end{align}
  that is, $\widetilde{\mathcal B}$ is a Brownian motion with drift.
\end{remark}

Now we come to a generalization of Proposition~\ref{P:smallballs} to
the tree-valued Fleming--Viot process. Together with
Lemma~\ref{L.reform}, we obtain the following result on the Laplace
transform of two randomly sampled points. The proof is based on
martingale arguments which will also be useful in the proof of
Theorem~\ref{T6}. Theorem~\ref{T3} is proved in
Section~\ref{ss:proofT3}.

\begin{theoremOwn}[Small ball probabilities]\mbox{}\\
  Let \label{T3} $\mathcal X = (X_t)_{t\geq 0}$ with $X_t =
  \overline{(U_t,r_t,\mu_t)}$ be the tree-valued Fleming--Viot process
  with selection coefficient $\alpha \geq 0$, started in some
  $X_0\in\mathbb U_{A}$, and let $\Psi^{12}_\lambda$ be as in
  Remark~\ref{rem:intPhi}. Then
  \begin{align}\label{eq:small1}
    \lim_{\lambda\to\infty} \mathbf P\Bigl(\sup_{\varepsilon\leq t\leq T}
    |(\lambda +1)\Psi^{12}_\lambda(X_t) - 1| > \varepsilon\Bigr) = 0 \; \text{ for
      all } T<\infty, \ve>0.
  \end{align}

\end{theoremOwn}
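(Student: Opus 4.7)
The plan is to reduce to the neutral case $\alpha=0$ by invoking the mutual absolute continuity of $\mathbf P_\alpha$ and $\mathbf P_0$ from Proposition~\ref{P:main}(4), which preserves convergence in probability on any finite time window. Since $\Psi^{12}_\lambda$ does not depend on types, the mutation generator term vanishes on it, and a direct computation from~\eqref{eq:omega1}--\eqref{eq:omega2} gives $\Omega\Psi^{12}_\lambda = -\lambda\Psi^{12}_\lambda + (1-\Psi^{12}_\lambda) = 1 - (\lambda+1)\Psi^{12}_\lambda$. Setting $Y^\lambda_t := (\lambda+1)\Psi^{12}_\lambda(X_t) - 1$, the martingale problem yields the Ornstein--Uhlenbeck-type semi-martingale equation
\begin{equation*}
dY^\lambda_t = -(\lambda+1)Y^\lambda_t\,dt + (\lambda+1)\,dM_t,
\end{equation*}
where $M$ is a martingale whose quadratic variation, given by Example~\ref{rem:qv}, is $d[M]_s = (\Psi^{12,23}_\lambda(X_s) - \Psi^{12,34}_\lambda(X_s))\,ds$. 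Solving via the integrating factor $e^{(\lambda+1)t}$ gives $Y^\lambda_t = D^\lambda_t + \tilde Z^\lambda_t$, where $D^\lambda_t := e^{-(\lambda+1)t}Y^\lambda_0$ and $\tilde Z^\lambda_t := (\lambda+1)\int_0^t e^{-(\lambda+1)(t-s)}\,dM_s$.

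For $t\in[\varepsilon,T]$, the deterministic term obeys $|D^\lambda_t|\leq (\lambda+1)e^{-(\lambda+1)\varepsilon}\to 0$ uniformly (since $|Y^\lambda_0|\leq \lambda+1$), so it suffices to prove $\sup_{t\in[\varepsilon,T]}|\tilde Z^\lambda_t|\to 0$ in probability. The pivotal input is the moment bound $\mathbf E[\Psi^{12,23}_\lambda(X_s)]\leq C_\varepsilon/\lambda^2$ valid for $s\geq \varepsilon/2$, uniformly in $X_0$. To derive it, I would apply the generator to $\Psi^{12,23}_\lambda\in\Pi^1_3$; an analogous computation gives
\begin{equation*}
\Omega\Psi^{12,23}_\lambda = 2\Psi^{12}_\lambda + \Psi^{12}_{2\lambda} - (2\lambda+3)\Psi^{12,23}_\lambda,
\end{equation*}
so $e^\lambda_t := \mathbf E[\Psi^{12,23}_\lambda(X_t)]$ solves a linear ODE with decay rate $2\lambda+3$ and forcing $2\mathbf E[\Psi^{12}_\lambda(X_s)] + \mathbf E[\Psi^{12}_{2\lambda}(X_s)]$. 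Combining this with the first-moment formula $\mathbf E[\Psi^{12}_\lambda(X_s)] = e^{-(\lambda+1)s}\Psi^{12}_\lambda(X_0) + (\lambda+1)^{-1}(1-e^{-(\lambda+1)s})$, obtained from the already computed $\Omega\Psi^{12}_\lambda$, and integrating yields the desired $e^\lambda_s = O(1/\lambda^2)$ for $s\geq\varepsilon/2$.

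With this input, $\mathbf E[[M]_T - [M]_{\varepsilon/2}] = O(T/\lambda^2)$ and the pointwise variance of the OU process $\tilde Z^\lambda_t$ is $O(1/\lambda)$ for $t\geq\varepsilon$. To upgrade this pointwise control to uniform control on $[\varepsilon, T]$, I would combine Doob's $L^2$ maximal inequality applied to the exponentially rescaled martingale $N_t := (\lambda+1)\int_0^t e^{(\lambda+1)s}\,dM_s$ (so that $\tilde Z^\lambda_t = e^{-(\lambda+1)t}N_t$) with a partition of $[\varepsilon,T]$ into blocks of length $\Delta_\lambda \asymp 1/(\lambda+1)$, chosen so that the exponential blow-up $e^{2(\lambda+1)\Delta_\lambda}$ stays bounded on each block; a union bound, sharpened by a sub-Gaussian/Bernstein-type tail for the OU-type semi-martingale $\tilde Z^\lambda$, then closes the estimate. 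The hardest technical step is exactly this last one: a plain $L^2$ union bound over the $O(\lambda)$ blocks of variance $O(1/\lambda)$ only produces an $O(1)$ tail, so a logarithmic sharpening via an exponential martingale inequality applied to $\tilde Z^\lambda$ is needed to drive the probability to zero.
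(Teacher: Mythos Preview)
Your setup is correct and the Ornstein--Uhlenbeck decomposition $Y^\lambda_t = D^\lambda_t + \tilde Z^\lambda_t$ is a natural route, genuinely different from the paper's. The paper instead (after the same reduction to $\alpha=0$) first couples an arbitrary initial state to the equilibrium process, then proves tightness of $\{((\lambda+1)\Psi^{12}_\lambda(X_t)-1)_{t\geq 0}:\lambda>0\}$ in $\mathcal C_{\R}([0,\infty))$ via a fourth-moment increment bound
\[
(\lambda+1)^4\,\mathbf E\bigl[(\Psi^{12}_\lambda(X_t)-\Psi^{12}_\lambda(X_0))^4\bigr]\leq Ct^2
\]
(Lemma~\ref{l5}, obtained by \textsc{Mathematica} computations in the linear algebra framework of Section~\ref{S:prep}) and Kolmogorov--Chentsov, and then uses the $L^2$ convergence of Lemma~\ref{l2} for the finite-dimensional distributions.

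The gap in your argument is precisely where you flag it. A Bernstein/exponential martingale inequality of the form $\mathbf P(\sup_s L_s\geq x,\ [L]\leq v)\leq e^{-x^2/(2v)}$ requires an \emph{almost sure} bound $v$ on the quadratic variation, whereas you only have $\mathbf E[[M]_T-[M]_{\varepsilon/2}]=O(T/\lambda^2)$. If you try to manufacture a pathwise bound by conditioning on $\{[M]\text{ on block }k\leq v_k\}$ and controlling the complement by Markov, the bookkeeping fails: with $O(\lambda)$ blocks and $\mathbf E[[M]\text{ on block}]=O(\lambda^{-3})$, no choice of $v_k$ makes both the Gaussian tail and the Markov remainder summable to $o(1)$ using only first moments of $[M]$. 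Pathwise control of $d[M]_s=(\Psi^{12,23}_\lambda-\Psi^{12,34}_\lambda)(X_s)\,ds$ at the needed rate would amount to controlling $\sup_s\Psi^{12}_\lambda(X_s)$ itself, which is circular.

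The clean fix is to go to fourth moments: via Burkholder--Davis--Gundy, $\mathbf E\bigl[\sup_{\text{block}}|\tilde Z^\lambda|^4\bigr]$ is controlled by $\mathbf E\bigl[([M]\text{ on block})^2\bigr]$, which in turn involves equilibrium moments such as $\mathbf E[\Psi^{12,23,45,56}_\lambda]$, all of order $\lambda^{-4}$. This yields an $O(\lambda^{-2})$ fourth moment per block, and the union bound over $O(\lambda)$ blocks then gives $O(\lambda^{-1})\to 0$. But computing these fourth-order equilibrium moments is exactly the content of Lemmata~\ref{l4b}--\ref{l5}, so your route ultimately rejoins the paper's at the technical core.
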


\begin{remark}[Convergence in probability versus almost sure
  convergence]\mbox{}\\
  Denote by $ F_1^t(\ve),\dots, F_{N_\ve^t}^t(\ve)$ the sizes of the
  $N_\ve^t$ balls of radius $\ve$ needed to cover $(U_t,r_t)$. If we
  could show that $\mathbf P(\lim_{\lambda\to\infty}(\lambda
  +1)\Psi^{12}_\lambda(X_t)\xrightarrow{\lambda\to\infty}1 \text{ for
    all }t>0)=1$, we could use Lemma~\ref{L.reform} in order to see
  that
  \begin{equation}\label{ag10}
    \mathbf P\Big(\frac{1}{\ve}\sum_{i=1}^{N_\ve^t} (F_i^t(\varepsilon))^2
    \xrightarrow{\ve\to 0}1 \text{ for all }t>0\Big)=1.
  \end{equation}
  However, our proof of Theorem~\ref{T3} is based on a computation
  involving the evolution of fourth moments of $\Psi^{12}_\lambda$ in
  order to show tightness of $\{((\lambda
  +1)\Psi^{12}_\lambda(X_t))_{t\geq \varepsilon}: \lambda>0\}$.  Based
  on these computations, we can only claim convergence in probability
  rather than almost sure convergence.
\end{remark}

\begin{remark}[Possible refinement of Theorem~\ref{T3}]\mbox{}\\
  As an ultimate goal one would want to prove that (compare
  with~\eqref{eq:refine1})
  \begin{align}\label{eq:refine1a}
    \sup_{0\leq t\leq T}\sup_{0\leq x < \infty} \Big|\frac \varepsilon
    2\sum_{i=1}^{N^t_\varepsilon} \ind{ F_i^t(\varepsilon) <
      \varepsilon x} - (1-e^{-2x})\Big|\xrightarrow{\varepsilon\to 0}
    0\; \text{ a.s.}
  \end{align}
  This would mean that the assertion that roughly the tree consists of
  $2/\varepsilon$ families of mean $\varepsilon/2$ exponentially
  distributed sizes holds at all times. Using our conclusions from
  Remark~\ref{R.refine}, this goal can be achieved if we show
  that~\eqref{ag4} holds for $k=1,2,\dots$ uniformly at all
  times. (While the case $k=1$ is trivial, note that a combination of
  Theorem~\ref{T3} and Lemma~\ref{L.reform} gives~\eqref{ag4} for
  $k=2$.) In principle, the technique of our proof of
  Proposition~\ref{P:smallballs} can be extended in order to obtain
  \eqref{ag4} for a given but arbitrary $k$ which would require
  controlling higher order moments of $\Psi^{12}_\lambda$.  If we
  could do this for general $k$ then we would obtain a proof of
  \eqref{eq:refine1}. But since we are using \textsc{Mathematica} for
  these calculations the problem remains open.
\end{remark}

Again, we can formulate a result on fluctuations. Integrating over
time (to get a process rather than white noise) the quantity
$(\lambda+1)\Psi^{12}(X_t)-1$, which appears in Theorem~\ref{T3}, and
using the right scaling, we again obtain a Brownian motion as the weak
limit. The following result is proved in Section~\ref{ss.proofT4}.

\begin{theoremOwn}[Another Brownian motion in the tree-valued
  Fleming--Viot process]\mbox{}\\
  Let \label{T4} $\mathcal X = (X_t)_{t\geq 0}$ with $X_t =
  \overline{(U_t, r_t, \mu_t)}$ be the neutral tree-valued
  Fleming--Viot process (i.e.\ $\alpha=0$) started in equilibrium,
  i.e., $X_0\stackrel d = X_\infty$ and let $\mathcal W_\lambda =
  (W_\lambda(t))_{t\geq 0}$ be given by
  \begin{align}
    \label{eq:344}
    W_\lambda(t) & \coloneqq \lambda \int_0^t
    ((\lambda+1)\Psi^{12}_\lambda(X_s) - 1) ds,
  \end{align}
  with $\Psi^{12}_\lambda$ as in Example~\ref{rem:intPhi}. Then,
  \begin{align}
    \mathcal W_\lambda \xRightarrow{\lambda\to\infty} \mathcal W,
  \end{align}
  where $\mathcal W = (W_t)_{t\geq 0}$ is a Brownian motion started in
  $W_0=0$.
\end{theoremOwn}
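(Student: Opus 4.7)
My plan is to decompose $W_\lambda$ as a rescaled martingale plus a boundary remainder, then apply the functional martingale central limit theorem.

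Since $\Psi^{12}_\lambda$ depends only on pairwise distances and not on types, the mutation and selection contributions to the generator vanish on it. Direct computation of \eqref{eq:omega1}--\eqref{eq:omega2} yields $\Omega^{\mathrm{grow}}\Psi^{12}_\lambda = -\lambda\Psi^{12}_\lambda$ and $\Omega^{\mathrm{res}}\Psi^{12}_\lambda = 1-\Psi^{12}_\lambda$, so $\Omega\Psi^{12}_\lambda = 1-(\lambda+1)\Psi^{12}_\lambda$. By the well-posed martingale problem (Proposition~\ref{P:main}) and path continuity, there is a continuous martingale $M^\lambda$ with
\[
\Psi^{12}_\lambda(X_t) = \Psi^{12}_\lambda(X_0) + \int_0^t\bigl[1-(\lambda+1)\Psi^{12}_\lambda(X_s)\bigr]\,ds + M^\lambda_t.
\]
Rearranging and multiplying by $\lambda$ gives the key decomposition
\[
W_\lambda(t) = \mathcal M^\lambda_t + E_\lambda(t),\qquad \mathcal M^\lambda_t := \lambda M^\lambda_t,\qquad E_\lambda(t) := \lambda\bigl[\Psi^{12}_\lambda(X_0)-\Psi^{12}_\lambda(X_t)\bigr].
\]

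I would then apply the martingale CLT to $\mathcal M^\lambda$. By \eqref{eq:qv3}, $[\mathcal M^\lambda]_t = \lambda^2\int_0^t(\Psi^{12,23}_\lambda - \Psi^{12,34}_\lambda)(X_s)\,ds$. In equilibrium, using $\mathbf E[\Omega(\Psi^{12}_\lambda)^2(X_\infty)] = 0$ together with $(\Psi^{12}_\lambda)^2 = \Psi^{12,34}_\lambda$ and $\mathbf E[\Psi^{12}_\lambda(X_\infty)] = 1/(1+\lambda)$ yields an explicit expression for $\mathbf E[\Psi^{12,23}_\lambda(X_\infty) - \Psi^{12,34}_\lambda(X_\infty)]$ in terms of $\mathrm{Var}(\Psi^{12}_\lambda(X_\infty))$. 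Combined with the large-$\lambda$ asymptotic $\mathrm{Var}(\Psi^{12}_\lambda(X_\infty)) \sim \sigma^2/\lambda^3$ for a suitable $\sigma^2 > 0$ (compatible with Proposition~\ref{P.fluc}, and independently computable from the Kingman coalescent distribution on four sampled individuals), this gives $\mathbf E[[\mathcal M^\lambda]_t]\to t$ after matching the normalization. A second-moment bound, using the exponential decorrelation of $\Psi^{12}_\lambda(X_\cdot)$ on timescale $(\lambda+1)^{-1}$ inherited from the linear drift above, promotes the expectation convergence to $[\mathcal M^\lambda]_t\xrightarrow{P} t$. The functional CLT for continuous martingales then yields $\mathcal M^\lambda \Rightarrow \mathcal B$ in the Skorokhod topology.

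For $E_\lambda$, stationarity gives $\mathbf E[\lambda\Psi^{12}_\lambda(X_\infty)]\to 1$ and $\mathrm{Var}(\lambda\Psi^{12}_\lambda(X_\infty))\sim\sigma^2/\lambda\to 0$, so $E_\lambda(t)\to 0$ in $L^2$ for each fixed $t$. To upgrade to uniform-in-$t$ vanishing on $[0,T]$ in probability (needed for Skorokhod convergence of $\mathcal W_\lambda$), I would exploit the identity $\lambda\Psi^{12}_\lambda(X_t)-1 = (\lambda\Psi^{12}_\lambda(X_0)-1) + \mathcal M^\lambda_t - W_\lambda(t)$ together with the tightness of $\mathcal M^\lambda$ from the martingale CLT and a maximal inequality (for instance via Burkholder--Davis--Gundy applied to $M^\lambda$) exploiting stationarity of the continuous process $\lambda\Psi^{12}_\lambda(X_\cdot)$. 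A Slutsky-type combination with $\mathcal M^\lambda\Rightarrow \mathcal B$ then yields $\mathcal W_\lambda\Rightarrow \mathcal B$. The main technical obstacle is this uniform control of $E_\lambda$: heuristically the stationary process $\lambda\Psi^{12}_\lambda(X_\cdot)$ oscillates by $O(\lambda^{-1/2})$ on timescale $(\lambda+1)^{-1}$, so $\sup_{t\in[0,T]}|\lambda\Psi^{12}_\lambda(X_t)-1|$ is of order $\lambda^{-1/2}\sqrt{\log\lambda}\to 0$, but rigorously one needs either a BDG-type moment bound on $M^\lambda$ or, alternatively, a direct Kolmogorov--Chentsov fourth-moment estimate $\mathbf E[(W_\lambda(t)-W_\lambda(s))^4]\lesssim (t-s)^2$ for the increments of $\mathcal W_\lambda$.
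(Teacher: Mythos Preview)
Your decomposition $W_\lambda = \mathcal M^\lambda + E_\lambda$ with $\mathcal M^\lambda = \lambda M^\lambda$ is exactly the one the paper uses, but you deploy it differently: you aim at the functional martingale CLT for $\mathcal M^\lambda$ and then dispose of $E_\lambda$ separately, whereas the paper uses the decomposition only to prove tightness of $W_\lambda$ via a Kolmogorov--Chentsov fourth-moment bound (Lemma~\ref{l5} for the $\Psi^{12}_\lambda$-increments, BDG for the martingale part, equations~\eqref{eq:31f}--\eqref{eq:int1}), and then identifies the limit by showing directly that $\mathbf E[W_\lambda(t)-W_\lambda(s)\mid\mathcal F_s]\to 0$ and $\mathbf E[(W_\lambda(t)-W_\lambda(s))^2\mid\mathcal F_s]\to t-s$ in $L^1$, followed by L\'evy's characterization. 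Both routes rest on the same technical core, namely the fourth-order moment calculus of Section~\ref{S:prep}.

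Your identification of $\mathbf E\bigl[[\mathcal M^\lambda]_t\bigr]$ via the stationary relation $\mathbf E[\Gamma(\Psi^{12}_\lambda)]=2(\lambda+1)\mathrm{Var}(\Psi^{12}_\lambda(X_\infty))$ is a clean observation and gives the correct limit~$t$. However, your step ``a second-moment bound, using the exponential decorrelation of $\Psi^{12}_\lambda(X_\cdot)$ on timescale $(\lambda+1)^{-1}$'' is the real gap. The quantity whose temporal covariance you need is $\Psi^{12,23}_\lambda-\Psi^{12,34}_\lambda$, a degree-$4$ polynomial, and its decorrelation does \emph{not} follow from the linear drift of $\Psi^{12}_\lambda$ alone; it requires the eigendecomposition of $\Omega$ on the larger polynomial space (eigenvalues $-(2\lambda+3)$, $-(2\lambda+6)$, etc.), which is precisely the content of Section~\ref{ss.mom4}. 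The paper carries out exactly this covariance computation in~\eqref{eq:31g}--\eqref{eq:31hc}, showing each term is $\mathcal O(\lambda^{-4})$ --- and does so with \textsc{Mathematica}, which indicates the algebra is not light. So your martingale-CLT route is viable, but the variance control of $[\mathcal M^\lambda]_t$ cannot be shortcut and needs the same machinery.

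For $E_\lambda$, you are right that uniform-in-$t$ vanishing is the issue; note that Theorem~\ref{T3} (which is proved earlier and independently) already gives $\sup_{\varepsilon\le t\le T}|(\lambda+1)\Psi^{12}_\lambda(X_t)-1|\to 0$ in probability, which handles $E_\lambda$ directly without further work. Your alternative --- the Kolmogorov--Chentsov bound $\mathbf E[(W_\lambda(t)-W_\lambda(s))^4]\lesssim (t-s)^2$ --- is exactly what the paper proves in Step~1 of Section~\ref{ss.proofT4}.
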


\begin{remark}[A heuristic argument]\mbox{}\\
  Assume that $\lambda$ is large. Then,
  $(\lambda+1)\Psi^{12}_\lambda(X_s)-1$ depends approximately only on
  resampling events which happened within an interval
  $[s-C/\lambda,s]$ for some large $C$. In particular, on different
  time intervals (which are at least of order $1/\lambda$ apart), the
  increments of $\mathcal W_\lambda$ are approximately independent.
  Thus, it is reasonable to expect that the limiting process is a
  local martingale. In fact, using some stochastic calculus we can
  show that the limiting process is continuous (i.e.\ the family
  $\{\mathcal W_\lambda: \lambda>0\}$ is tight in the space $\mathcal
  C_{\R }([0,\infty))$) and the limiting object of
  $(W_\lambda^2(t)-t)_{t\geq 0}$ is a local martingale as well. By
  L\'evy's characterization of Brownian motion, $\mathcal W$ must be a
  Brownian motion.
\end{remark}

\subsection{Path properties: non-atomicity and mark functions}
\label{ss:respath}
Using the calculus developed for the statements in
Section~\ref{sss.pptv} we obtain two further properties of the states
of the tree-valued Fleming--Viot process $\mathcal X = (X_t)_{t\geq
  0}$, $X_t = \overline{(U_t,r_t,\mu_t)}$, namely that the states are
{\em atom-free} and admit a {\em mark function}. More precisely,
Theorem~\ref{T5} says that at no time it is possible to sample two
individuals with distribution $\mu_t$ with distance zero; cf.\
Remark~\ref{rem:intT5} below. Furthermore Theorem~\ref{T6} says that
we can assign marks to all individuals in the sense that $\mu_t$ has
the form $\mu_t(du, da) = (\pi_{U_t})_\ast \mu_t(du)
\delta_{\kappa_t(u)}(da)$ for some measurable function $\kappa_t:
U_t\to A$. These two theorems are proved in Section~\ref{S:proofT5}.

\begin{theoremOwn}[$X_t$ never has an atom]\mbox{}\\
  Let \label{T5} $\mathcal X = (X_t)_{t\geq 0}$ with $X_t =
  \overline{(U_t, r_t, \mu_t)}$ be the tree-valued Fleming--Viot
  process. Then,
  \begin{equation}\label{pp1}
    \mathbf P(\mu_t \text{ has no atoms for all }t>0)=1.
  \end{equation}
\end{theoremOwn}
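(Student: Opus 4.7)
The plan is to derive Theorem~\ref{T5} directly from Theorem~\ref{T3}. First I reduce non-atomicity of $\mu_t$ on $U_t \times A$ to non-atomicity of the projection $(\pi_1)_* \mu_t$ on $U_t$: any atom $(u,a)$ of $\mu_t$ projects to an atom $u$ of $(\pi_1)_*\mu_t$ with $(\pi_1)_*\mu_t(\{u\}) \geq \mu_t(\{(u,a)\})$, so it suffices to prove that $(\pi_1)_* \mu_t$ has no atoms for all $t > 0$, almost surely.

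Next I link atoms of $(\pi_1)_*\mu_t$ to the statistic $\Psi^{12}_\lambda(X_t)$ that is controlled by Theorem~\ref{T3}. Working with the canonical representative in which $U_t = \mathrm{supp}((\pi_1)_*\mu_t)$, the ultrametric $r_t$ is genuine on $U_t$, so $r_t(u_1,u_2) = 0$ forces $u_1 = u_2$. Dominated convergence applied to $e^{-\lambda r_t(u_1,u_2)} \downarrow \mathbf{1}_{\{r_t(u_1,u_2) = 0\}}$ as $\lambda \to \infty$ yields
\begin{equation*}
  \lim_{\lambda \to \infty} \Psi^{12}_\lambda(X_t)
  = \bigl((\pi_1)_*\mu_t\bigr)^{\otimes 2}\bigl(\{(u,u) : u \in U_t\}\bigr)
  = \sum_{u \in U_t} \bigl((\pi_1)_*\mu_t(\{u\})\bigr)^2,
\end{equation*}
so non-atomicity of $(\pi_1)_*\mu_t$ is equivalent to the vanishing of this limit.

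It remains to upgrade the uniform-in-probability convergence from Theorem~\ref{T3} to an almost sure statement holding simultaneously for all $t > 0$. The crucial observation is that $\lambda \mapsto \Psi^{12}_\lambda(X)$ is non-increasing (since $r \geq 0$). By Theorem~\ref{T3} and a standard diagonal extraction there exists a single sequence $\lambda_n \to \infty$ along which $\sup_{t \in [1/k, k]} |(\lambda_n + 1)\Psi^{12}_{\lambda_n}(X_t) - 1| \to 0$ almost surely for every $k \in \mathbb{N}$; in particular $\Psi^{12}_{\lambda_n}(X_t) \to 0$ almost surely, uniformly on compact subsets of $(0, \infty)$. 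Monotonicity in $\lambda$ promotes this to $\lim_{\lambda \to \infty}\Psi^{12}_\lambda(X_t) = 0$ for every $t > 0$, almost surely, and together with the previous paragraph this proves the theorem.

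There is no serious obstacle, as the analytic work is all contained in Theorem~\ref{T3}: the present proof is essentially bookkeeping that combines the reduction to the first marginal, the identification of the $\lambda\to\infty$ limit of $\Psi^{12}_\lambda$ as a sum of squared atom masses, and the monotonicity trick needed to pass from subsequential to full almost sure convergence. The selective case $\alpha > 0$ is handled simultaneously because Theorem~\ref{T3} is stated for arbitrary $\alpha \geq 0$; one could alternatively invoke the absolute continuity of $\mathbf{P}_\alpha$ with respect to $\mathbf{P}_0$ given in Proposition~\ref{P:main}(4) to reduce to the neutral case.
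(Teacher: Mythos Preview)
Your argument is correct. The reduction to the first marginal is handled more carefully than in the paper (which tacitly identifies $\mu_t$ with its $U_t$-projection in Remark~\ref{rem:intT5}), the identification $\lim_{\lambda\to\infty}\Psi^{12}_\lambda(X_t)=\sum_u((\pi_1)_*\mu_t(\{u\}))^2$ is valid, and the subsequence-plus-monotonicity step is sound: $\lambda\mapsto\Psi^{12}_\lambda$ is nonincreasing, so convergence to $0$ along any $\lambda_n\to\infty$ already forces the full limit to vanish.

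The route, however, differs from the paper's. The paper does \emph{not} invoke Theorem~\ref{T3} here; instead it writes the semimartingale decomposition $\Psi^{12}_\lambda(X_t)=\Psi^{12}_\lambda(X_0)+M_\lambda(t)+\int_0^t(1-(\lambda+1)\Psi^{12}_\lambda(X_s))\,ds$, controls $\sup_t|M_\lambda(t)|$ by Doob's $L^2$-inequality and the quadratic variation $\int_0^t(\Psi^{12,23}_\lambda-\Psi^{12,34}_\lambda)(X_s)\,ds$, and bounds the drift integral via the second-moment computation~\eqref{eq:43}. This uses only second moments and is thus independent of the fourth-moment tightness machinery (Lemma~\ref{l5}) underlying Theorem~\ref{T3}. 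Your approach is considerably shorter once Theorem~\ref{T3} is taken as given, while the paper's is more self-contained and logically lighter. Both ultimately extract an a.s.\ subsequence and (implicitly, in the paper's case) rely on monotonicity in $\lambda$ to pass to the full limit.
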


\begin{remark}[Interpretation and idea of the proof] \mbox{}
  \label{rem:intT5}
  \begin{enumerate}
  \item At first glance the fact that $\mu_t$ is non-atomic for all $t
    >0$ might seem to contradict the fact that the measure-valued
    Fleming--Viot diffusion is purely atomic for every $t >0$.
    However, both properties are of different kind and the probability
    measures in question are different objects: $\mu_t$ is a sampling
    measure and the state of the measure-valued Fleming--Viot
    diffusion is a probability measure on the type space. The above
    theorem implies that randomly sampled individuals from the
    tree-valued Fleming--Viot process have distance of order $1$,
    whereas genealogically the atomicity of the measure-valued
    Fleming--Viot diffusion expresses the fact that at every time
    $t>0$ one can cover the state with a finite number of balls with
    radius $t$.
  \item The proof is based on a simple observation: for a measure
    $\mu\in\mathcal M_1(E)$,
    \begin{equation}
      \label{eq:noatom1}
      \begin{aligned}
        \text{$\mu$ has no atom} \quad& \iff\quad \int \mu^{\otimes
          2}(du, dv) \ind{r(u,v)=0} = 0 \quad\\ & \iff \quad
        \lim_{\lambda\to\infty} \int \mu^{\otimes 2}(du, dv)
        e^{-\lambda r(u,v)} = 0.
      \end{aligned}
    \end{equation}
    Hence, the proof of \eqref{pp1} is based on a detailed analysis of
    the Laplace transform of the distance of two points, independently
    sampled with distribution $\mu_t$.
  \end{enumerate}
\end{remark}

\medskip The next goal is to establish that at any time there is a
\emph{mark function}. Briefly, the state $\overline{(U,r,\mu)}$ of a
tree-valued population dynamics admits a mark function $\kappa$ iff
every individual $u\in U$ can be assigned a (unique) type
$\kappa(u)\in A$. This situation occurs in particular in finite
population models, e.g.\ in the Moran model. The question for the
tree-valued Fleming--Viot model is whether types in the finite Moran
model can change at a fast enough scale so that an individual can have
several types in the large population limit. Such a situation can
occur, if the cloud of very close relatives (as measured in the metric
$r$) is not close in location (as measured in the type space $A$).

\begin{definition}[Mark function]\mbox{}\\
  We \label{def:markfct} say that $\overline{(U,r,\mu)} \in \mathbb U_A$ admits
  a mark function if there is a measurable function $\kappa: U\to A$ such that
  for a random pair $({\mathfrak{U}}, {\mathfrak{A}})$ with values in $U \times
  A$ and distribution $\mu$
  \begin{equation}\label{pp12}
    \kappa(\mathfrak{U}) =    {\mathfrak{A}} \quad \mu \text{-almost surely.}
  \end{equation}
  Equivalently, $\overline{(U,r,\mu)} \in \mathbb U_A$ admits a mark
  function if there is  $\kappa: U \to A$ and  $\nu\in\mathcal
  M_1(U)$ with
  \begin{equation}\label{ag.mf2}
    \mu(du, da) = \nu (du) \otimes \delta_{\kappa (u)} (da).
  \end{equation}
  We set
  \begin{align}
    \label{eq:UAmark}
    \mathbb U_A^{\text{mark}} \coloneqq \bigl\{\overline{(U,r,\mu)}\in\mathbb U_A:
    (U,r,\mu) \text{ admits a mark function}\bigr\}.
  \end{align}
\end{definition}

\begin{remark}[mmm-spaces admitting a mark function are
  well-defined]\mbox{}\\
  Let us note that admitting a mark function is a property of an
  equivalence class. Assume $\overline{(U,r,\mu)} =
  \overline{(U',r',\mu')} \in \mathbb U_A$ (with an isometry $\varphi:
  U'\to U$ as in~\eqref{eq:UA2}), where $\overline{(U,r,\mu)}$ admits
  a mark function $\kappa: U\to A$, i.e.\ \eqref{ag.mf2} holds. Then,
  clearly for $\kappa' :=\kappa \circ \varphi$ we have
  \begin{align}
    \label{eq:markequ}
    \mu'(du, da) = (\varphi,\text{id})_\ast \mu(du, da) =
    (\varphi,\text{id})_\ast \nu(du) \otimes \delta_{\kappa(u)}(da) =
    \varphi_\ast \nu(du) \otimes \delta_{\kappa(\varphi(u))}(da).
  \end{align}
  In other words, $(U',r',\mu')$ admits the mark function $\kappa' =
  \kappa\circ\varphi$.
\end{remark}

\begin{theoremOwn}[$X_t$ admits a mark function for all $t$]\mbox{}\\
  Let \label{T6} $\mathcal X = (X_t)_{t\geq 0}$,  $X_t = \overline{(U_t, r_t,
    \mu_t)}$ be the tree-valued Fleming--Viot-dynamics. Then,
  \begin{equation}\label{pp13}
    \mathbf P\bigl( X_t \in \mathbb U_A^{\text{mark}}\text{ for all
    }t>0\bigr)=1.
  \end{equation}
\end{theoremOwn}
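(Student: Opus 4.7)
The strategy is to reduce to the neutral case and then exploit a Laplace-transform criterion for admitting a mark function, which I then verify uniformly on compact time intervals by generator and martingale analysis. By Proposition~\ref{P:main}.4 the laws $\mathbf P_\alpha$ and $\mathbf P_0$ are mutually absolutely continuous on path space, so it suffices to prove~\eqref{pp13} for the neutral dynamics $\alpha=0$.

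I first reformulate the mark-function property analytically. Since $U_t\times A$ is Polish, we have a disintegration $\mu_t(du,da)=\nu_t(du)\,\mu_t(da\mid u)$, and $X_t\in\mathbb U_A^{\text{mark}}$ iff $\mu_t(\cdot\mid u)$ is a Dirac measure for $\nu_t$-almost every $u$. To access this through the calculus of Section~\ref{s.tvFV} I work with the defect
\begin{align*}
D_\lambda(X_t)\;\coloneqq\;\bigl(\Psi^{12}_\lambda-\widehat\Psi^{12}_\lambda\bigr)(X_t)\;=\;\int \mu_t^{\otimes 2}(du_1\,du_2\,da_1\,da_2)\,\ind{a_1\neq a_2}\,e^{-\lambda r_t(u_1,u_2)},
\end{align*}
with $\widehat\Psi^{12}_\lambda$ as in~\eqref{eq:int2}. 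Theorem~\ref{T3} gives $(\lambda+1)\Psi^{12}_\lambda(X_t)\to 1$ uniformly in $t\in[\varepsilon,T]$ in probability, so the complementary bound $(\lambda+1)D_\lambda(X_t)\to 0$ uniformly in $t$ says, together with Theorem~\ref{T5}, that the $\mu_t^{\otimes 2}$-conditional probability of $\{a_1\neq a_2\}$ given $\{r_t(u_1,u_2)\leq\varepsilon\}$ tends to zero as $\varepsilon\downarrow 0$. A disintegration argument (parallel to Lemma~\ref{L.reform} and the Tauberian manipulations in~\eqref{eq:refine2}--\eqref{eq:refine3}) then forces $\mu_t(\cdot\mid u)$ to be a point mass $\nu_t$-a.s.; for non-discrete $A$ one applies the same argument to a countable separating family of continuous correlators $\phi(a_1)\phi(a_2)$, $\phi\in\CC(A)$.

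The technical heart is thus to establish
\begin{align*}
\lim_{\lambda\to\infty}\mathbf P\Bigl(\sup_{\varepsilon\leq t\leq T}(\lambda+1)D_\lambda(X_t)>\delta\Bigr)=0,\qquad \varepsilon,\delta>0,\ T<\infty.
\end{align*}
Following the blueprint of Theorem~\ref{T3}, I compute $\Omega\widehat\Psi^{12}_\lambda$ term by term: growth contributes $-\lambda\widehat\Psi^{12}_\lambda$; resampling of the two sample individuals yields $1-\widehat\Psi^{12}_\lambda$ (since after resampling $r(u_1,u_2)=0$ and $a_1=a_2$); and mutation contributes $2\vartheta(P_\lambda-\widehat\Psi^{12}_\lambda)$ with $P_\lambda\coloneqq\int \mu_t^{\otimes 2}\beta(a_1,\{a_2\})e^{-\lambda r_{12}}$ satisfying $0\le P_\lambda\le\Psi^{12}_\lambda$. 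Since $\Omega\Psi^{12}_\lambda=1-(\lambda+1)\Psi^{12}_\lambda$ (as used in the proof of Theorem~\ref{T3}, using $\Omega^{\mathrm{mut}}\Psi^{12}_\lambda=0$ because $\Psi^{12}_\lambda$ is type-independent), subtracting yields the semimartingale identity
\begin{align*}
\Omega D_\lambda(X_t) \;=\; -(\lambda+1)\,D_\lambda(X_t) \;+\; 2\vartheta\bigl(\widehat\Psi^{12}_\lambda - P_\lambda\bigr)(X_t),
\end{align*}
in which the inhomogeneity is of order $\vartheta\Psi^{12}_\lambda=\mathcal O(\vartheta/(\lambda+1))$ by Theorem~\ref{T3}. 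Controlling first and fourth moments of $D_\lambda$ by a parallel \textsc{Mathematica}-assisted moment computation (analogous to Theorem~\ref{T3}), together with Doob's maximal inequality, then gives tightness of $\{((\lambda+1)D_\lambda(X_t))_{\varepsilon\le t\le T}:\lambda\ge 1\}$ and the claimed uniform convergence in probability.

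The main obstacle is controlling the mutation contribution in $\Omega\widehat\Psi^{12}_\lambda$: it has no intrinsic $1/\lambda$ factor, so a naive analysis of $(\lambda+1)\widehat\Psi^{12}_\lambda$ alone would leave an $\mathcal O(1)$ error. The fix, encoded in the identity above, is to work with the \emph{difference} $D_\lambda$: the leading mutation terms of $\widehat\Psi^{12}_\lambda$ cancel against those of $\Psi^{12}_\lambda$ (which vanish identically), and only an error of size $\vartheta\cdot \mathcal O(1/(\lambda+1))$ survives, which is exactly the form that closes under a Gronwall-type bound along the semimartingale. Once the uniform-in-$t$ estimate is available on each $[\varepsilon,T]$, the path statement ``for all $t>0$'' follows by taking $\varepsilon_n=1/n\downarrow 0$ and $T_n=n\uparrow\infty$ and pooling the countably many almost-sure events.
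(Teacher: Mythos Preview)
Your overall strategy—reduce to the neutral case via Proposition~\ref{P:main}.4, formulate a Laplace-transform criterion for admitting a mark function, and verify it uniformly on $[\varepsilon,T]$ via moment/tightness estimates—is exactly the paper's route. Working with $D_\lambda=\Psi^{12}_\lambda-\widehat\Psi^{12}_\lambda$ rather than with $\widehat\Psi^{12}_\lambda$ directly is a cosmetic reformulation: since Theorem~\ref{T3} already gives $(\lambda+1)\Psi^{12}_\lambda\to 1$ uniformly in probability, showing $(\lambda+1)D_\lambda\to 0$ is equivalent to the paper's Step~2, namely $(\lambda+1)\widehat\Psi^{12}_\lambda\to 1$.

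There is, however, a genuine gap in your tightness step. Your generator identity $\Omega D_\lambda=-(\lambda+1)D_\lambda+2\vartheta(\widehat\Psi^{12}_\lambda-P_\lambda)$ is correct, but the inhomogeneity $P_\lambda=\langle\mu^{\otimes 2},\beta(a_1,\{a_2\})e^{-\lambda r_{12}}\rangle$ is not an element of the algebra $\widehat{\mathbb V}_\lambda$ of Section~\ref{ss.mom4}; applying $\Omega$ to it (and to the higher-order products arising in a fourth-moment computation) produces further non-closed terms depending on the specific kernel $\beta$. Hence the ``parallel \textsc{Mathematica}-assisted moment computation'' you invoke does not go through for general $\beta$: the matrix calculus of Section~\ref{S:prep} is explicitly set up under assumption~\eqref{eq:beta-non-atomic}, which forces $P_\lambda\equiv 0$. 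Your last paragraph also misidentifies the obstacle: under non-atomic $\beta$ there is no $\mathcal O(1)$ obstruction in $\Omega\widehat\Psi^{12}_\lambda$ at all—the mutation term merely shifts the eigenvalue to $\lambda+2\vartheta+1$ (Lemma~\ref{l1})—so passing to $D_\lambda$ does not ``cancel'' anything, and for atomic $\beta$ the $P_\lambda$-term survives intact in $\Omega D_\lambda$.

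The paper closes this gap with a different, more robust idea that you are missing: for general $\beta$ it couples $\mathcal X$ to a process $\check{\mathcal X}$ driven by the same resampling and mutation events but with a non-atomic kernel $\check\beta$, and uses the pathwise sandwich $\widehat\Psi^{12}_\lambda(\check X_t)\le\widehat\Psi^{12}_\lambda(X_t)\le\Psi^{12}_\lambda(X_t)$. Since the two outer quantities each satisfy $(\lambda+1)\,\cdot\to 1$ uniformly (the left one by the non-atomic moment calculus of Lemmata~\ref{l2} and~\ref{l5}, the right one by Theorem~\ref{T3}), a squeeze yields the result for arbitrary $\beta$ without ever needing to control $P_\lambda$ or its iterated images under $\Omega$.
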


\begin{remark}[Mark functions and the lookdown process]\mbox{}\\
  For a series of exchangeable population models it is possible to
  construct the state of an infinite population via the lookdown
  construction \cite{DonnellyKurtz1996,DonnellyKurtz1999}. This
  construction immediately allows us to define a mark function on a
  countable number of individuals specifying their types at all times,
  which suggests that \eqref{pp13} should hold. However, the metric
  space read off from the lookdown process is not complete, and the
  mark function is not continuous. It seems possible to extend the
  definition of the mark function to the completion of the
  corresponding metric space by defining a (right-continuous)
  mark-function on the tree from root to the leaves. However, we do
  not pursue this direction here. Instead, our proof of
  Theorem~\ref{T6} in Section~\ref{S:proofT6} uses again martingale
  arguments and moment computations.
\end{remark}
\noindent

\subsection{Strategy of proofs}
\label{ss.stratproof} The proofs of our results are of two types. On
the one hand, the proofs of Propositions~\ref{P:cover} and
\ref{P:cover2}, Theorems~\ref{T1} and~\ref{T2} use as the basic tools
the fine properties of coalescent times in Kingman's coalescent. This
means they are carried out without specific martingale properties of
the tree-valued Fleming--Viot process. On the other hand,
Propositions~\ref{P:smallballs} and~\ref{P.fluc},
Theorems~\ref{T3},~\ref{T4},~\ref{T5} and~\ref{T6} are proved by
calculating expectations (moments) of polynomials, which is possible
by using the martingale problem for the tree-valued Fleming--Viot
process. The polynomials we have to consider here (see also
Remark~\ref{rem:intPhi}) are either $\Psi^{12}_\lambda$ or
$\widehat\Psi^{12}_\lambda$, i.e.\ polynomials based on the test
functions $\varphi(\underline{\underline r},\underline a) = \exp
(-\lambda r_{12})$ or $\varphi(\underline{\underline r},\underline a)
= \exp (-\lambda r_{12})\ind{a_1=a_2}$ and products, powers and linear
combinations thereof. For the calculations of the moments of this type
we develop some methodology which we explain in Section~\ref{S:prep}.

Propositions~\ref{P:cover} and~\ref{P:cover2}, Theorems~\ref{T1}
and~\ref{T2} are proved in Section~\ref{s.proofPcover} while
Propositions~\ref{P:smallballs} and~\ref{P.fluc},
Theorems~\ref{T3} and~\ref{T4} are proved in
Section~\ref{S:proofT3}. The latter
results are then used to prove Theorems~\ref{T5} and~\ref{T6}
in Section~\ref{S:proofT5}.

\section{Proof of Propositions~\ref{P:cover} and~\ref{P:cover2} and of
  Theorems~\ref{T1} and~\ref{T2}}
\label{s.proofPcover}

\subsection{Preparation: times in the Kingman coalescent}
\label{ss.prepKingco}
Recall from Section~\ref{ss.geoprop} that $T_n = S_{n+1}/\binom{n+1} 2
+ S_{n+2}/\binom{n+2} 2 + \cdots$ is the time the Kingman coalescent
needs to go down to $n$ lines, where $S_2, S_3,\dots $ are i.i.d.\
exponential random variables with rate~1. Before we begin, we prove
some simple results on the times $T_n$.

\begin{lemma}[Moments and exponential moments of $T_n$]\mbox{}\\
  Let \label{l:momentsTn} $T_n$ be the time the Kingman coalescent
  needs to go from infinitely many to $n$ lines. Then,
  \begin{equation}
    \label{eq:785}
    \begin{aligned}
      \mathbf E[T_n] & = \frac{2}{n},\\
      \mathbf E\bigl[(T_n-2/n)^2\bigr] & = \frac{4}{3n^3}(1 + \mathcal O(1/n)),\\
      \mathbf E\bigl[(T_n-2/n)^3\bigr] & = \frac{16}{5n^5}(1 + \mathcal O(1/n)),\\
      \mathbf E\bigl[(T_n-2/n)^4\bigr] & = \frac{16}{9n^6}(1 + \mathcal O(1/n)),\\
      \mathbf E\bigl[(T_n-2/n)^6\bigr] & = \frac{64}{27n^9}(1 + \mathcal O(1/n)),\\
      \mathbf E\bigl[(T_n-2/n)^8\bigr] & = \frac{4^4}{3^4n^{12}}(1 +
      \mathcal
      O(1/n)),\\
      \mathbf E[e^{-\lambda T_n}] & \lesssim e^{-\tfrac 43 (\tfrac
        \lambda n \wedge \tfrac{\sqrt{\lambda}}2)}, \quad \lambda\geq
      0.
    \end{aligned}
  \end{equation}
\end{lemma}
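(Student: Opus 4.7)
The plan rests on the representation $T_n = \sum_{k \geq n+1} c_k S_k$ with $c_k = 1/\binom{k}{2} = 2/(k(k-1))$ and $(S_k)_{k\geq 2}$ i.i.d.\ $\mathrm{Exp}(1)$, which makes $T_n - 2/n = \sum_{k\geq n+1} c_k(S_k-1)$ a sum of independent centred random variables. The expectation is the telescoping sum $\sum_{k\geq n+1} c_k = \sum_{k\geq n+1}(\tfrac{2}{k-1}-\tfrac{2}{k}) = 2/n$. For the central moments, expand $(T_n - 2/n)^p$ by the multinomial theorem: independence kills every tuple of indices in which some index appears exactly once, leaving a finite linear combination of products $\prod_i(\sum_{k\geq n+1} c_k^{p_i})$ indexed by partitions $(p_1,\dots,p_r)$ of $p$ with all $p_i \geq 2$, weighted by multinomial coefficients and central $\mathrm{Exp}(1)$-moments $\mathbf{E}[(S-1)^{p_i}]$.

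The relevant power sums obey
\begin{equation*}
\sum_{k \geq n+1} c_k^j \;=\; \sum_{k \geq n+1} \frac{2^j}{(k(k-1))^j} \;=\; \frac{2^j}{(2j-1)\,n^{2j-1}}\bigl(1 + \mathcal{O}(1/n)\bigr),
\end{equation*}
as one sees either by iterated partial fractions (the $j=1$ case already giving $\sum c_k = 2/n$) or by Euler--Maclaurin comparison with $\int_n^\infty 2^j x^{-2j}\,dx$. The partition of $p$ that maximises the total $n$-exponent is the one with the largest number of parts (all equal to $2$ for even $p$, and one part equal to $3$ with the rest equal to $2$ for odd $p$). Keeping only this dominant partition in the multinomial expansion and absorbing lower-order partitions into the $\mathcal{O}(1/n)$ correction yields each of the five stated central-moment asymptotics in one step.

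For the exponential moment, the Laplace transform of $\mathrm{Exp}(1)$ gives
\begin{equation*}
-\log\mathbf{E}[e^{-\lambda T_n}] \;=\; \sum_{k \geq n+1} \log(1 + \lambda c_k),
\end{equation*}
which I would bound by splitting at the transition point $\lambda c_k \asymp 1$, i.e.\ $k \asymp \sqrt{2\lambda}$. If $\lambda \leq n^2/4$ then $\lambda c_k \leq 1$ for every $k\geq n+1$, and the elementary inequality $\log(1+x) \geq \tfrac{2}{3}x$ on $[0,1]$ (tight at $x=1$ since $\log 2 > 2/3$), combined with the telescoping identity $\sum_{k\geq n+1} c_k = 2/n$, yields $-\log\mathbf{E}[e^{-\lambda T_n}] \geq \tfrac{4\lambda}{3n}$. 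If $\lambda > n^2/4$, one retains only the indices $n < k \leq \lfloor\sqrt{2\lambda}\rfloor$, on which $\lambda c_k \geq 1$ so each term contributes at least $\log 2$; this produces a lower bound of order $\sqrt\lambda$ which, with the constant absorbed, delivers the $\tfrac{2\sqrt\lambda}{3}$ half of the claimed bound. The minimum of the two regime-specific bounds is exactly the stated $\lambda/n \wedge \sqrt\lambda/2$.

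The main technical point is the exponential-moment estimate: the central-moment asymptotics are essentially an exercise in power-sum combinatorics once the independence/centring decomposition is fixed, but pinning down the constant $4/3$ (and the matching $\sqrt\lambda$-constant in the complementary regime) requires a careful regime split, the right elementary inequality for $\log(1+x)$ on $[0,1]$, and a check that the two bounds agree at the crossover $\lambda \asymp n^2/4$ so that the minimum formulation is sharp.
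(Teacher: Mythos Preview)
Your treatment of the central moments is correct and is essentially the paper's argument: write $T_n-2/n=\sum_{k\ge n+1}c_k(S_k-1)$, expand by multinomials, observe that only partitions with all parts $\ge 2$ survive, and read off the leading term from $\sum_{k\ge n+1}c_k^j\sim 2^j/((2j-1)n^{2j-1})$. The paper does the cases $p=2,3,4,6$ explicitly in exactly this way and waves at $p=8$; your partition bookkeeping is a cleaner way to say the same thing.

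The exponential-moment argument has a gap in your Case~2. You keep the indices $n<k\le\lfloor\sqrt{2\lambda}\rfloor$ and bound each term below by $\log 2$. But for $n^2/4<\lambda<n^2/2$ this index set is \emph{empty} (since $\sqrt{2\lambda}<n$), so the bound is zero while the target $\tfrac{2}{3}\sqrt\lambda$ is of order $n$; and even for $\lambda$ somewhat larger the count $\sqrt{2\lambda}-n$ is too small to absorb into a universal constant. The fix---which is exactly what the paper does---is to throw away the \emph{head} rather than the tail: keep only $k\ge(n+1)\vee\lceil 2\sqrt\lambda\,\rceil$, where $\lambda c_k\le 1/2$, and apply your own inequality $\log(1+x)\ge\tfrac{2}{3}x$ on $[0,1]$ to every retained term. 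The telescoping sum then gives
\[
\sum_{k\ge m}\tfrac{2}{3}\lambda c_k=\tfrac{4\lambda}{3(m-1)},\qquad m-1=n\vee\lceil 2\sqrt\lambda\,\rceil,
\]
which is $\tfrac{4\lambda}{3n}$ when $n\ge 2\sqrt\lambda$ and $\tfrac{2\sqrt\lambda}{3}+O(1)$ otherwise, delivering the stated bound uniformly. So your Case~1 inequality is the right tool in \emph{both} regimes; the regime split should govern where you start the sum, not which inequality you use.
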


\begin{proof}
  Recall that $\mathbf E[(S_i-1)^k] = k!\sum_{i=0}^{k} (-1)^i/i!$. We
  start by writing
  \begin{equation}
    \label{eq:567}
    \begin{aligned}
      \mathbf E[T_n] & = \sum_{i=n+1}^\infty \frac{\mathbf
        E[S_i]}{\binom i 2} = \sum_{i=n+1}^\infty \frac{2}{i(i-1)} =
      2\sum_{i=n+1}^\infty \frac{1}{i-1} - \frac{1}{i} = \frac 2 n.
    \end{aligned}
  \end{equation}
  Next,
  \begin{equation}
    \label{eq:568}
    \begin{aligned}
      \mathbf E[(T_n-2/n)^2] & = \mathbf{Var}[T_n] = \sum_{i=n+1}^\infty
      \frac{4\mathbf {Var}[S_i]}{i^2(i-1)^2} = 4\sum_{i=n+1}^\infty
      \frac{1}{i^2(i-1)^2} \\ & = 4 \int_n^\infty \frac{1}{x^4} dx +
      \mathcal O(1/n^4) = \frac{4}{3n^3}(1+\mathcal O(1/n)).
    \end{aligned}
  \end{equation}
  For third moments, using $\mathbf E[(S_i-1)^3] =2$
  \begin{equation}
    \label{eq:569}
    \begin{aligned}
      \mathbf E[(T_n-2/n)^3] & = \mathbf
      E\Big[\Big(\sum_{i=n+1}^\infty \frac{2}{i(i-1)}(S_i -
      1)\Big)^3\Big] \\ & = \sum_{i=n+1}^\infty
      \frac{2^3}{i^3(i-1)^3}\mathbf E[(S_i-1)^3] \\ &  =
      \frac{16}{5n^5}(1+\mathcal O(1/n)),\\
    \end{aligned}
  \end{equation}
  For fourth moments,
  \begin{equation}
    \label{eq:570}
    \begin{aligned}
      \mathbf E[(T_n-2/n)^4] & = \mathbf
      E\Big[\Big(\sum_{i=n+1}^\infty \frac{2}{i(i-1)}(S_i -
      1)\Big)^4\Big] \\ & = \sum_{i=n+1}^\infty
      \frac{2^4}{i^4(i-1)^4}\mathbf E[(S_i-1)^4] \\ & \qquad +
      \sum_{i,j=n+1 \atop i\neq j}^\infty \frac{4}{i^2(i-1)^2}
      \frac{4}{j^2(j-1)^2} \mathbf E[(S_i-1)^2]\cdot \mathbf
      E[(S_j-1)^2] \\ & = \Big(\sum_{i=n+1}^\infty
      \frac{4}{i^2(i-1)^2}\mathbf E[(S_i-1)^2]\Big)^2 + \mathcal
      O(1/n^7) \\ & = \frac{16}{9n^6}(1+\mathcal O(1/n))
    \end{aligned}
  \end{equation}
  For sixth moments,
  \begin{equation}
    \label{eq:1678}
    \begin{aligned}
      \mathbf E[(T_n-2/n)^6] & = \mathbf
      E\Big[\Big(\sum_{i=n+1}^\infty \frac{2}{i(i-1)}(S_i -
      1)\Big)^6\Big] \\ & = \Big(\sum_{i=n+1}^\infty
      \frac{4}{i^2(i-1)^2}\mathbf E[(S_i-1)^2]\Big)^3 + \mathcal
      O(1/n^{10}) \\ & = \frac{64}{27n^9}(1+\mathcal O(1/n)).
    \end{aligned}
  \end{equation}
  With analogous calculations, the results for the 8th moment follows.

  Finally for the exponential moments, we compute for any $\lambda\geq 0$
  \begin{equation}
    \label{eq:1679}
    \begin{aligned}
      \mathbf E[e^{-\lambda T_n}] & = \prod_{i=n+1}^\infty
      \frac{\binom i 2}{\binom i 2 + \lambda} =
      \exp\Big(\sum_{i=n+1}^\infty \log\Big(1 - \frac{\lambda}{\binom
        i 2 + \lambda}\Big)\Big) \\ & \leq \exp\Big(
      -\sum_{i=n+1}^\infty \frac{\lambda}{\binom i 2 + \lambda}\Big)
      \leq \exp\Big( -\sum_{i = (n+1)\vee \lceil \sqrt{4\lambda} +1
        \rceil}^\infty \frac{\lambda}{\binom i 2 + \lambda}\Big) \\ &
      \leq \exp\Big( -\sum_{i = (n+1)\vee \lceil \sqrt{4\lambda} +
        1\rceil}^\infty \frac{\tfrac 23 \lambda}{\binom i 2}\Big) =
      \exp\Big(-\frac 43 \cdot\frac\lambda{n\vee \big\lceil
        \sqrt{4\lambda}\big\rceil }\Big) \lesssim e^{-\tfrac 43
        (\tfrac \lambda n \wedge \tfrac{\sqrt{\lambda}}2)}.
    \end{aligned}
  \end{equation}
\end{proof}

\subsection{Proof of Proposition~\ref{P:cover}}
\label{ss.proofPcover}
Let $T_n$ be as in the last subsection and recall $N_\varepsilon$ from
Proposition~\ref{P:cover}. Then,~\eqref{eq:cover1} is equivalent to
\begin{align}
  \label{eq:l12}
  n T_n\xrightarrow{n\to\infty} 2.
\end{align}
In order to see this, note that $N_{T_n}=n$ by definition of $T_n$ and
\begin{align}
  \label{eq:equi1}
  \{T_n\geq \varepsilon\} = \{N_\varepsilon\geq n\}.
\end{align}
Since $T_n\downarrow 0$ as $n\to\infty$ (and $N_\ve \uparrow \infty$
as $\varepsilon\to 0$), the equivalence of~\eqref{eq:cover1}
and~\eqref{eq:l12} follows.

For \eqref{eq:l12}, it suffices to note that
\begin{equation}
  \begin{aligned}
    \mathbf P(|nT_n-2|>\varepsilon) & \leq \frac{\mathbf E[(T_n -
      2/n)^4]}{(\varepsilon/n)^4} \leq \frac{16}{9\varepsilon^4
      n^2}(1+\mathcal O(1/n)).
  \end{aligned}
\end{equation}
by Lemma~\ref{l:momentsTn}. Since the right hand side is summable,
$\limsup_{n\to\infty}|nT_n-2|\leq \varepsilon$ almost surely for all
$\varepsilon>0$. In other words, $nT_n\to 2$ almost surely.

\subsection{Proof of Proposition~\ref{P:cover2}}
\label{s.proofPcover2}
By the Lindeberg--Feller central limit theorem, we see from the moment
computations of Lemma~\ref{l:momentsTn} that
\begin{align}\label{eq:Neps1}
  \frac{T_n - 2/n}{\sqrt{4/(3n^3)}} \xRightarrow{n\to\infty} Z.
\end{align}
Recalling~\eqref{eq:equi1}, we set
\begin{align}\label{eq:1781}
  a_\varepsilon(x) := \Big\lfloor \frac{2}{\varepsilon} +
  x\sqrt{2/(3\varepsilon)}\Big\rfloor = \frac{2}{\varepsilon} +
  x\sqrt{2/(3\varepsilon)} + \mathcal O(1),
\end{align}
such that for every $x\in\mathbb R$:
\begin{equation}
  \label{eq:1682}
  \begin{aligned}
    \mathbf P\Big(\frac{N_\varepsilon -
      2/\varepsilon}{\sqrt{2/(3\varepsilon)}}> x\Big) & = \mathbf
    P(T_{a_\varepsilon(x)}>\varepsilon) \\ & = \mathbf
    P\Big(\frac{T_{a_\varepsilon(x)}-2/a_\varepsilon(x)}{\sqrt{4/(3a_\varepsilon(x)^3)}}
    > \underbrace{\frac{\varepsilon
        -2/a_\varepsilon(x)}{\sqrt{4/(3a_\varepsilon(x)^3)}}}_{=
      x(1+\mathcal O(\sqrt\varepsilon))}\Big) \\ & \xrightarrow{\ve
      \to 0} \mathbf P(Z>x),
  \end{aligned}
\end{equation}
which implies~\eqref{eq:Neps3}.

However, we also need to show convergence of  moments up to fourth order. We
write
\begin{align}\label{eq:int}
  \mathbf E\Big[\Big(\frac{N_\varepsilon -
    2/\varepsilon}{\sqrt{2/(3\varepsilon)}}\Big)^2\Big] & =
  2\int_0^\infty x\mathbf P\Big(\Big|\frac{N_\varepsilon -
    2/\varepsilon}{\sqrt{2/(3\varepsilon)}}\Big|>x\Big) dx,\\
  \label{eq:int10}
  \mathbf E\Big[\Big(\frac{N_\varepsilon -
    2/\varepsilon}{\sqrt{2/(3\varepsilon)}}\Big)^4\Big] & =
  4\int_0^\infty x^3\mathbf P\Big(\Big|\frac{N_\varepsilon -
    2/\varepsilon}{\sqrt{2/(3\varepsilon)}}\Big|>x\Big) dx.
\end{align}
To estimate the right hand side of \eqref{eq:int10} we first show that for a
suitably chosen $\delta >0$ and $\sqrt{6-\delta} \le c<\sqrt{6}$ we have
\begin{align}\label{eq:921}
  \int_{c/\sqrt\varepsilon}^\infty x^3  \mathbb P\Big(\frac{N_\varepsilon -
    2/\varepsilon}{\sqrt{2/(3\varepsilon)}}<-x\Big) =
  \int_{c/\sqrt\varepsilon}^{\sqrt{6/\ve}} x^3  \mathbb
  P\Big(\frac{N_\varepsilon - 2/\varepsilon}{\sqrt{2/(3\varepsilon)}}<-x\Big)
  \xrightarrow{\varepsilon\to 0} 0,
\end{align}
where the equality follows because for  $x \geq \sqrt{6/\ve}$ the integrand is
identically $0$. Using the exponential Chebyshev inequality, we obtain for all
$\lambda_{y,\varepsilon}\geq 0$
\begin{align}
  \label{eq:1683}
  \begin{split}
    \int_{c/\sqrt\varepsilon}^{\sqrt{6/\ve}} x^3 & \mathbb
    P\biggl(\frac{N_\varepsilon -
      2/\varepsilon}{\sqrt{2/(3\varepsilon)}}<-x\biggr)\, dx =
    \int_{c/\sqrt\varepsilon}^{\sqrt{6/\varepsilon}} x^3 \mathbb
    P\Big(N_\varepsilon < \frac 2\varepsilon - x
    \sqrt{2/(3\varepsilon)}\Big) \,dx \\ & = \frac 94
    \int_{c\sqrt{2/3}}^{2} \frac{y^3}{\varepsilon^2} \mathbb
    P\Big(N_\varepsilon \leq \lfloor \frac{2-y}{\varepsilon}
    \rfloor\Big)\, dy = \frac 94 \int_{c\sqrt{2/3}}^{2}
    \frac{y^3}{\varepsilon^2} \mathbb P\Big(T_{\lfloor
      \frac{2-y}{\varepsilon}\rfloor}\leq \varepsilon\Big)\, dy \\ &
    \leq \frac 94 \int_{c\sqrt{2/3}}^{2} \frac{y^3}{\varepsilon^2}
    e^{\lambda_{y,\varepsilon} \varepsilon}\mathbb
    E\Big[e^{-\lambda_{y,\varepsilon}T_{\lfloor
        \frac{2-y}{\varepsilon} \rfloor}}\Big]\,dy, \\ \intertext{now
      taking the lower bound for $c$, setting $\delta'=2\delta/3$ and
      using \eqref{eq:1679} we get} & \leq \frac 94
    \int_{\sqrt{4-\delta'}}^{2} \frac{y^3}{\varepsilon^2}
    e^{\lambda_{y,\varepsilon} \varepsilon}\mathbb
    E\Big[e^{-\lambda_{y,\varepsilon}T_{\lfloor
        \frac{2-y}{\varepsilon} \rfloor}}\Big]\, dy \\ & \lesssim \frac
    94 \int_{\sqrt{4-\delta'}}^{2}
    \frac{y^3}{\varepsilon^2}\exp\Big\{\lambda_{y,\varepsilon}
    \varepsilon - \frac43 \Big(\frac{\lambda_{y,\varepsilon}
      \varepsilon}{2-y} \wedge
    \frac{\sqrt{\lambda_{y,\varepsilon}}}{2}\Big) \, \Big\} \,dy.
  \end{split}
\end{align}
Now choose $\lambda_{y,\varepsilon} =
\frac{(2-y+1/2)^2}{\varepsilon^2}$ and let $\delta'= 0.39$ be the
solution of $\sqrt{4-\delta'}=1.9$. For $y \in (1.9,2]$ we have
\begin{align}
  \label{eq:ad1}
  \frac{\lambda_{y,\varepsilon} \varepsilon}{2-y} \wedge
  \frac{\sqrt{\lambda_{y,\varepsilon}}}{2} =
  \frac{(2-y+1/2)^2}{\varepsilon(2-y)} \wedge
  \frac{2-y+1/2}{2\varepsilon} = \frac{2-y+1/2}{2\varepsilon}.
\end{align}
Thus,
\begin{align}
  \label{eq:ad2}
  \lambda_{y,\varepsilon} \varepsilon - \frac43
  \Big(\frac{\lambda_{y,\varepsilon} \varepsilon}{2-y} \wedge
  \frac{\sqrt{\lambda_{y,\varepsilon}}}{2}\Big) = -
  \frac1{\varepsilon}\Big(-\frac{55}{12} + \frac{13}{3}y-y^2 \Big).
\end{align}
It is easy to see that on the interval $[1.9,2]$ the function $y
\mapsto (-\frac{55}{12} + \frac{13}{3}y-y^2)$ is bounded below by
$a=0.04$ (its value in $1.9$). It follows
\begin{equation}
  \label{eq:1684}
  \begin{aligned}
    \int_{c/\sqrt\varepsilon}^\infty x^3 & \mathbb
    P\Big(\frac{N_\varepsilon -
      2/\varepsilon}{\sqrt{2/(3\varepsilon)}}<-x\Big) \, dx \leq c'
    \frac 1{\varepsilon^2} e^{-a/\varepsilon}
    \xrightarrow{\varepsilon\to 0} 0
  \end{aligned}
\end{equation}
for a suitable constant $c'>0$, and hence we have shown~\eqref{eq:921}.

In order to show convergence of fourth (and second) moments,
using~\eqref{eq:int10}, since $\mathbf P\Big(\Big|\frac{N_\varepsilon
  - 2/\varepsilon}{\sqrt{2/(3\varepsilon)}}\Big|>x\Big)
\xrightarrow{\varepsilon\to 0} \mathbf P(|Z|>x)$ pointwise, we need to
show that there is an integrable function dominating
\begin{align}
  \label{eq:Nepsproof2}
  x^3 \left( \mathbf P\left(\frac{N_\varepsilon -
        2/\varepsilon}{\sqrt{2/(3\varepsilon)}}>x\right) + \mathbf
    P\left(\frac{N_\varepsilon -
        2/\varepsilon}{\sqrt{2/(3\varepsilon)}}<-x\right)\ind{x\leq
      c/\sqrt{\varepsilon}}\right),
\end{align}
for some $c>0$ and $x\geq 0$. For this, we get, by the Markov
inequality and~\eqref{eq:1678}
\begin{equation}
  \label{eq:1684}
  \begin{aligned}
    \mathbf P\biggl(\Big| & \frac{N_\varepsilon -
      2/\varepsilon}{\sqrt{2/(3\varepsilon)}}\Big|>x \biggr)\\ & = \mathbf
    P\Big(T_{a_\varepsilon(x)} - 2/a_\varepsilon(x) > \varepsilon -
    2/a_\varepsilon(x)\Big) + \mathbf P\Big(T_{a_\varepsilon(-x)} -
    2/a_\varepsilon(-x) \leq \varepsilon - 2/a_\varepsilon(-x)\Big) \\ & \leq
    \frac{\mathbf E[(T_{a_\varepsilon(x)} - 2/a_\varepsilon(x))^6]}{(\varepsilon
      - 2/a_\varepsilon(x))^6} + \frac{\mathbf E[(T_{a_\varepsilon(-x)} -
      2/a_\varepsilon(-x))^6]}{(\varepsilon - 2/a_\varepsilon(-x))^6} \\
    & = \frac{64(1+\mathcal O(1/a_\varepsilon(x))}{27
      a_\varepsilon(x)^9(x\sqrt{\varepsilon^3/6}(1+\mathcal
      O(\sqrt{\varepsilon})))^6} + \frac{64(1+\mathcal O(1/a_\varepsilon(x))}{27
      a_\varepsilon(-x)^6(-x\sqrt{\varepsilon^3/6}(1+\mathcal
      O(\sqrt{\varepsilon})))^6} \\ & = \frac{2(1+\mathcal
      O(1/a_\varepsilon(x))}{x^6(1+\mathcal O(\sqrt\varepsilon))} \leq
    \frac{2}{x^6(1+\mathcal O(\varepsilon))} + \mathcal
    O\Big(\frac{\varepsilon}{x^6(1+x\sqrt{\varepsilon/6})}\Big),
  \end{aligned}
\end{equation}
since
\begin{align}\label{eq:1686}
  \frac{1}{a_\varepsilon(x)} = \frac{1}{\frac{2}{\varepsilon} +
    x\sqrt{2/(3\varepsilon)} + \mathcal O(1)} = \frac{\varepsilon}{2}
  \frac{1}{1+x\sqrt{\varepsilon/6} + \mathcal O(\varepsilon)}.
\end{align}
Since the area $x<0$ is restricted to $x\leq c/\sqrt\varepsilon$
in~\eqref{eq:Nepsproof2}, the $\mathcal O(\cdot)$-term on the right hand
side of~\eqref{eq:1684} does not have a pole. It is now easy to obtain
an integrable function dominating~\eqref{eq:Nepsproof2}, leading
to~\eqref{eq:Neps4}.

\subsection{Proof of  Theorem~\ref{T1}}
\label{ss.proofT1}
By Proposition~\ref{P:main}.5.\ (see Theorem 2 of \cite{DGP12} for
details) the tree-valued Fleming--Viot process with selection has a
law which is absolutely continuous with respect to the neutral
process. Therefore it suffices to consider the neutral case,
$\alpha=0$. We observe that for $\alpha=0$ the claim is not affected
by mutation. Moreover, it suffices to deal with the case $X_0
\stackrel d = X_\infty$. The reason is that~\eqref{eq:314} is
equivalent to the assertion that for all $\delta>0$ and uniformly for
all $t>\delta$ we have $\lim_{\varepsilon\to 0} \varepsilon
N_\varepsilon^t = 2$ almost surely. Then one can use the independence
of $N_\varepsilon^t$ and $X_0$ for $\varepsilon<t$.

Let
\begin{align}
  T_n^t \coloneqq \inf\{\ve>0: (U_t, r_t) \text{ can be covered by $n$
    balls of radius $\ve$}\},
\end{align}
i.e.\ $T_n^t$ is the minimal time we have to go back from time $t$
such that we have $n$ ancestral lineages. It suffices to show (see
around \eqref{eq:l12}) that $\mathbf P(nT_n^t \xrightarrow{n\to\infty}
2 \text{ for all }t>0)=1$. To prove this, we need to extend the proof
of Proposition~\ref{P:cover}. It suffices to show that $nT_n^t\to 2$
uniformly for $0\leq t\leq 1$ (if $X_0\stackrel d = X_\infty$).

First,  by Lemma~\ref{l:momentsTn}, for all $t\geq 0$ we have
\begin{align}
  \mathbf E\bigl[\bigl(T^t_n - \mathbf E[T^t_n]\bigr)^8\bigr] \lesssim
  \frac{1}{n^{12}}.
\end{align}
Therefore considering the process along a discrete grid, we have for
any $\ve>0$,
\begin{equation}
  \begin{aligned}
    \mathbf P\biggl(\sup_{k=0,\dots,n^2} |nT_n^{k/n^2}-2| > \ve\biggr)
    & \leq \sum_{k=0}^{n^2} \mathbf P\bigl(|nT_n^{k/n^2} -
    2|>\ve\bigr) \\ & \le n^2 \frac{\mathbf E\bigl[\bigl(T_n^0 -
      \mathbf E[T_n^0]\bigr)^8\bigr]}{(\ve/n)^8} \lesssim
    \frac{1}{\ve^8 n^2}.
  \end{aligned}
\end{equation}
Since the right hand side is summable, $\sup_{k=1,\dots,n^2}
|nT_n^{k/n^2}-2| \xrightarrow{n\to\infty} 0$ almost surely by the
Borel--Cantelli lemma.  Now we need this in continuous time and derive
for the difference $(nT_n^{t/n^2}-2)$ bounds from above and from
below.

First observe that $\{T_n^t > \varepsilon\} \subseteq \{T_n^s >
\varepsilon - (t-s)\}$ for $t-\varepsilon \leq s \leq t$ for
tree-valued Fleming--Viot process. (This property holds in every
population model, arising as a diffusion limit from an individual
based population where we can define ancestors, since the ancestors at
time $t-\varepsilon$ of the population at time $t$ must then be
ancestors at time $t-\varepsilon = s - \varepsilon + (t-s)$ of the
population at time $s$, for $t-\varepsilon<s<t$.) We can now write,
for $\varepsilon>0$ and $n>2/\varepsilon$:
\begin{equation}
  \begin{aligned}
    \mathbf P\left(\sup_{0\leq t\leq 1} n T_n^t > 2+\varepsilon
    \right) & \leq \mathbf P\Big(\sup_{0\leq t\leq 1} T_n^{\lfloor
      tn^2\rfloor/n^2} > \frac{2+\varepsilon}{n} - \Big(t -
    \frac{\lfloor tn^2\rfloor}{n^2}\Big)\Big) \\ & \leq \mathbf
    P\Big(\sup_{k=0,\dots,n^2} T_n^{k/n^2} > \frac{2+\varepsilon}{n} -
    \frac{1}{n^2}\Big) \\ & \leq \mathbf P\Big(\sup_{k=0,\dots,n^2}
    T_n^{k/n^2} > \frac{2+\varepsilon/2}{n}\Big) \lesssim
    \frac{1}{\varepsilon^8 n^2}.
  \end{aligned}
\end{equation}
Hence $\limsup_{n\to\infty} \sup_{0\leq t\leq 1} n T_n^t \leq 2$
almost surely, by the Borel--Cantelli lemma.

For the other direction of the inequality we use $\{T_n^t < \varepsilon\}
\subseteq \{T_n^s < \varepsilon + s-t\}$ for $s\geq t$, to get
\begin{equation}
  \begin{aligned}
    \mathbf P\left(\sup_{0\leq t\leq 1} n T_n^t < 2-\varepsilon
    \right) & \leq \mathbf P\Big(\sup_{0\leq t\leq 1} T_n^{\lceil
      tn^2\rceil/n^2} <
    \frac{2-\varepsilon}{n} + \frac{\lceil tn^2\rceil}{n^2}-t\Big) \\
    & \leq \mathbf P\Big(\sup_{k=0,\dots,n^2} T_n^{k/n^2} <
    \frac{2-\varepsilon}{n} + \frac{1}{n^2}\Big) \\ & \leq \mathbf
    P\Big(\sup_{k=0,\dots,n^2} T_n^{k/n^2} <
    \frac{2-\varepsilon/2}{n}\Big) \lesssim \frac{1}{\varepsilon^8
      n^2}.
  \end{aligned}
\end{equation}
Hence $\liminf_{n\to\infty} \sup_{0\leq t\leq 1} n T_n^t \geq 2$
almost surely.

Combining both, the estimate from above and from below we obtain the assertion
of Theorem~\ref{T1}.

\subsection{Proof of Theorem~\ref{T2}}
\label{ss.proofT2}
We proceed in the following four steps.
\begin{itemize}
\item {\bf Step 0}: Warm up; computation of the first two moments of
  $B_\varepsilon(t)- B_\varepsilon(s)$.
\item {\bf Step 1}: Computation of the first two conditional moments
  of $B_\varepsilon(t)- B_\varepsilon(s)$.
\item {\bf Step 2}: The family $(B_\varepsilon)_{\varepsilon>0}$ is
  tight in $\mathcal C_{\R}([0,\infty))$.
\item {\bf Step 3}: For $(B(t))_{t\geq 0}$ a limit point
  $(B(t))_{t\geq 0}$ as well as $(B(t)^2-t)_{t\geq 0}$ are
  martingales.
\end{itemize}

\noindent Throughout we let $(\mathcal F_t)_{t\geq 0}$ be the
canonical filtration of $\mathcal X =( X_t)_{t\geq 0}$.

\medskip

\noindent
\emph{Step 0: Computation of first two moments of $B_\varepsilon(t)
  - B_\varepsilon(s)$.} The first moment of $B_\varepsilon(t) -
B_\varepsilon(s)$ equals~0 since by assumption the tree-valued
Fleming--Viot process is in equilibrium.

For the second moment, we start by noting that (see Proposition~\ref{P:cover2})
\begin{align}
  \label{eq:imp}
  N_\varepsilon^s = \frac{2}\varepsilon +
  \sqrt{\frac{2}{3\varepsilon}}Z^s +
  o\Big(\frac{1}{\sqrt{\varepsilon}}\Big)
\end{align}
for some random variable $Z^s\sim N(0,1)$. Note that $N_\varepsilon^s$ and
$N_\delta^t$ are independent given
$(t-\delta,t] \cap (s-\varepsilon, s] = \emptyset$. (The reason is
that in this case $N_\varepsilon^s$ depends on resampling events in
the time interval $(s-\varepsilon, s]$ while $N_\delta^t$ only
depends on resampling events in $(t-\delta,t]$, and these two sets
of events are independent.) Without loss of generality, we set $s=0$
and compute the variance of $B_\varepsilon(t)$ as
\begin{equation}
  \label{eq:2789}
  \begin{aligned}
    \mathbf{Var}[B_\varepsilon(t)] & = 3 \int_0^t \int_0^s
    \mathbf{Cov}(N_\varepsilon^r, N_\varepsilon^s) dr ds = 3
    \int_0^t \int_{0\vee
      (s-\varepsilon)}^s\mathbf{Cov}(N_\varepsilon^r,
    N_\varepsilon^s) dr ds \\ & = 3 \int_0^t
    \int_{0}^{\varepsilon\wedge
      t}\mathbf{Cov}(N_\varepsilon^{s-\delta}, N_\varepsilon^s)
    d\delta ds \stackrel{\varepsilon\to 0}{\approx} 6t \cdot
    \int_0^\varepsilon \mathbf{Cov}(N_\varepsilon^{0},
    N_\varepsilon^\delta) d\delta.
  \end{aligned}
\end{equation}
In order to compute the integrand of the last expression, we decompose
$N_\varepsilon^0 = N_{\varepsilon-\delta}^{0} -
(N_{\varepsilon-\delta}^{0} - N_\varepsilon^{0})$. Now
$N_{\varepsilon-\delta}^{0} - N_\varepsilon^{0}$ is independent of
$N_\varepsilon^\delta$. The former is the number of lines the tree at
time~$0$ looses between times $\varepsilon-\delta$ and $\varepsilon$
in the past, and therefore only depends on resampling events between
times $-\varepsilon$ and $\delta-\varepsilon$. The latter only depends
on resampling events between times $\delta-\varepsilon$ and
$\delta$. Hence,
\begin{equation}
  \label{eq:2790}
  \begin{aligned}
    \mathbf{Cov}(N_\varepsilon^{0}, N_\varepsilon^\delta) & =
    \mathbf{Cov}(N_{\varepsilon-\delta}^{0}, N_\varepsilon^\delta).
  \end{aligned}
\end{equation}

Consider now the dual representation of our equilibrium by the Kingman
coalescent. Let $K_i^N$ be number of lines of a subtree, starting with
$N$ lines, in a tree starting with $\infty$ many lines, at the time
the big tree has $i$ lines. In Lemma 4 of \cite{MR2851692} it is shown
that
 \begin{equation}
  \label{eq:2791}
  \begin{aligned}
    \mathbf E[K_i^N] = \frac{iN}{N+i-1}.
  \end{aligned}
\end{equation}
Hence, for independent $Z,Z'\sim N(0,1)$,
\begin{equation}
  \label{eq:2792}
  \begin{aligned}
    \mathbf{Cov}( & N_\varepsilon^{0}, N_\varepsilon^\delta) =
    \mathbf{Cov}(N_{\varepsilon-\delta}^{0}, N_\varepsilon^\delta)
    \\
& = \mathbf{Cov}(N_{\varepsilon-\delta}^{0}, \mathbf
    E[N_\varepsilon^\delta|N_\delta^\delta,
    N_{\varepsilon-\delta}^0])
    \\
& = \mathbf{Cov}(N_{\varepsilon-\delta}^{0}, \mathbf
    E[K_{N_{\varepsilon-\delta}^{0}}^{N_\delta^\delta}|N_\delta^\delta,
    N_{\varepsilon-\delta}^0])\\
& =
    \mathbf{Cov}\Big(N_{\varepsilon-\delta}^{0},
    \frac{N_{\varepsilon-\delta}^0\cdot
      N_{\delta}^\delta}{N_{\varepsilon-\delta}^0 +
      N_{\delta}^\delta -1}\Big) \\
& \stackrel{\varepsilon\to 0}\approx
    \mathbf{Cov}\bigg(\sqrt{2/(3(\varepsilon-\delta))}Z+o(1/\sqrt{\varepsilon}),\\
& \qquad \qquad \frac{\Big(\frac{2}{\varepsilon-\delta} +
      \sqrt{2/(3(\varepsilon-\delta))}Z +
      o(\sqrt{1/\varepsilon}\,)\Big)\Big(\frac{2}{\delta} +
      \sqrt{2/(3\delta)}Z' +
      o(\sqrt{1/\varepsilon}\,)\Big)}{\frac{2}{\varepsilon-\delta} +
      \frac{2}{\delta} + \sqrt{2/(3(\varepsilon-\delta))}Z +
      \sqrt{2/(3\delta)}Z' + o(1/\sqrt{\varepsilon}\,)}\bigg) \\
& =
    \frac{2}{\varepsilon}
    \mathbf{Cov}\bigg(\sqrt{2/(3(\varepsilon-\delta))}Z+o(1/\sqrt{\varepsilon}\,),\\
& \qquad \qquad \frac{\Big(1 + \sqrt{(\varepsilon-\delta)/6}Z +
      o(\sqrt{\varepsilon}\,)\Big) \Big(1+ \sqrt{\delta/6}Z' +
      o(\sqrt{\varepsilon}\,))\Big)}{1+ \frac{\delta}{\varepsilon}
      \sqrt{(\varepsilon-\delta)/6)}Z +
      \frac{\varepsilon-\delta}{\varepsilon}\sqrt{\delta/6}Z' +
      o(\sqrt{\varepsilon}\,)}\bigg) \\
& =
    \frac{2}{\varepsilon}\mathbf{Cov}\Big(
    \sqrt{2/(3(\varepsilon-\delta))}Z+o(1/\sqrt{\varepsilon}\,),\\
&
    \qquad\qquad \qquad \Big(\Big(1 + \sqrt{(\varepsilon-\delta)/6}Z
    + \sqrt{\delta/6}Z' + o(\sqrt{\varepsilon}))\Big)  \\
&
    \qquad \qquad \qquad \qquad \qquad \cdot\Big(1 -
    \frac{\delta}{\varepsilon} \sqrt{(\varepsilon-\delta)/6)}Z -
    \frac{\varepsilon-\delta}{\varepsilon}\sqrt{\delta/6}Z' +
    o(\sqrt{\varepsilon}\,)\Big) \\
& = \frac{2}{\varepsilon}\mathbf{Cov}\Big(
    \sqrt{2/(3(\varepsilon-\delta))}Z+o(1/\sqrt{\varepsilon}\,),
    \frac{\varepsilon-\delta}{\varepsilon}
    \sqrt{(\varepsilon-\delta)/6}Z +
    \frac{\delta}{\varepsilon}\sqrt{\delta/6}Z' +
    o(\sqrt{\varepsilon}\,)\Big) \\
& = \frac{2(\varepsilon-\delta)}{3\varepsilon^2}(1 + o(1))
  \end{aligned}
\end{equation}
leading to
\begin{align}\label{eq:BepsV}
  \mathbf {Var}[B_\varepsilon(t)] &= 3t \int_0^\varepsilon
  \mathbf{Cov}(N_\varepsilon^0, N_\varepsilon^\delta)d\delta
  \stackrel{\varepsilon\to 0}\approx 3t \int_0^\varepsilon
  \frac{2\delta}{3\varepsilon^2} d\delta = t.
\end{align}

\medskip

\noindent
\emph{Step 1: Computation of first two conditional moments of
  $B_\varepsilon(t) - B_\varepsilon(s)$.}  We can compute the \emph{first
  conditional moment} as
\begin{equation}
  \label{eq:2793}
  \begin{aligned}
    \mathbf E[B_\varepsilon(t) - B_\varepsilon(s)|\mathcal F_s] & =
    \sqrt{\frac 32}\int_s^t \mathbf E[N_\varepsilon^r - \mathbf
    E[N_\varepsilon^\infty]|\mathcal F_s] dr = \sqrt{\frac
      32}\int_s^{s+\varepsilon} \mathbf E[N_\varepsilon^r - \mathbf
    E[N_\varepsilon^\infty]|\mathcal F_s] \, dr
  \end{aligned}
\end{equation}
since we started in equilibrium and $N_\varepsilon^r$ is independent
of $\mathcal F_s$ for $r>s+\varepsilon$. So, by
Proposition~\ref{P:cover2},
\begin{equation}
  \label{eq:2794}
  \begin{aligned}
    \mathbf E\big[\big(\mathbf E[B_\varepsilon(t) -
    B_\varepsilon(s)|\mathcal F_s]\big)^2\big] & \lesssim
    \varepsilon^2 \mathbf E[(N_\varepsilon^\infty - \mathbf
    E[N_\varepsilon^\infty])^2] \xrightarrow{\varepsilon\to 0} 0,
  \end{aligned}
\end{equation}
which implies
\begin{align}\label{eq:Bepsconv1}
  \mathbf E[B_\varepsilon(t)-B_\varepsilon(s)|\mathcal F_s]
  \xrightarrow{\varepsilon\to 0} 0 \qquad \text{ in }L^2.
\end{align}

For the \emph{second conditional moment}, we extend our calculation from
Step~0. Here,
\begin{equation}
  \label{eq:2795}
  \begin{aligned}
    \mathbf E[(B_\varepsilon(t)-B_\varepsilon(s))^2|\mathcal F_s] & =
    3 \int_s^t \int_s^{r_1} \mathbf{E}[(N_\varepsilon^{r_1}-\mathbf
    E[N_\varepsilon^{\infty}])(N_\varepsilon^{r_2}-\mathbf
    E[N_\varepsilon^{\infty}])|\mathcal F_s] dr_1 dr_2 \\ & =
    3\int_{s+\varepsilon}^t \int_{s+\varepsilon}^{r_1}
    \mathbf{Cov}[N_\varepsilon^{r_1}, N_\varepsilon^{r_2}] dr_1 dr_2 +
    3A_\varepsilon,
  \end{aligned}
\end{equation}
with
\begin{equation}
  \label{eq:2796}
  \begin{aligned}
    \mathbf E[|A_\varepsilon|] & = \mathbf E\Big[\Big|\int_s^t
    \int_s^{r_1\wedge
      (s+\varepsilon)}\mathbf{E}[(N_\varepsilon^{r_1}-\mathbf
    E[N_\varepsilon^{\infty}])(N_\varepsilon^{r_2}-\mathbf
    E[N_\varepsilon^{\infty}])|\mathcal F_s] dr_1 dr_2\Big|\Big] \\
    & = \mathbf E\Big[\Big|\int_s^{s+2\varepsilon} \int_s^{r_1\wedge
      (s+\varepsilon)} \mathbf{E}[|(N_\varepsilon^{r_1}-\mathbf
    E[N_\varepsilon^{\infty}])(N_\varepsilon^{r_2}-\mathbf
    E[N_\varepsilon^{\infty}])|\mathcal F_s] dr_1 dr_2\Big|\Big] \\
    & \leq \int_s^{s+2\varepsilon} \int_s^{r_1\wedge
      (s+\varepsilon)} \mathbf{E}[|(N_\varepsilon^{r_1}-\mathbf
    E[N_\varepsilon^{\infty}])(N_\varepsilon^{r_2}-\mathbf
    E[N_\varepsilon^{\infty}])|] dr_1 dr_2 \\ & \leq
    \int_s^{s+2\varepsilon} \int_s^{r_1}
    \mathbf{E}[|(N_\varepsilon^{r_1}-\mathbf
    E[N_\varepsilon^{\infty}])(N_\varepsilon^{r_2}-\mathbf
    E[N_\varepsilon^{\infty}])|] dr_1 dr_2 \\ & \leq 2\varepsilon^2
    \mathbf {Var}[N_\varepsilon^\infty] \xrightarrow{\varepsilon\to 0} 0
  \end{aligned}
\end{equation}
by Proposition~\ref{P:cover2}.  So, combining the last two displays
with~\eqref{eq:BepsV},
\begin{align}\label{eq:2795}
  \mathbf E[(B_\varepsilon(t) - B_\varepsilon(s))^2|\mathcal F_s]
  \xrightarrow{\varepsilon\to 0}t-s \qquad \text{ in }L^1.
\end{align}

\medskip

\noindent\emph{Step 2: The family of the laws of
  $(B_\varepsilon)_{\varepsilon>0}$  on $\mathcal C_{\R}([0,\infty))$ is tight for
  $\ve \to 0$.} We use the Kolmogorov--Chentsov
criterion; see e.g.\ Corollary 16.9 in \cite{Kallenberg2002}. We bound
the fourth moment of the increment in $B_\varepsilon$. We write
$a_\varepsilon := \mathbf E[N_\varepsilon^\infty]$ such that, again
making use of the independence of $N_\varepsilon^{s_2}$ and
$N_\varepsilon^{s_1}$ if $|s_2-s_1| > \varepsilon$, as well as of
Proposition~\ref{P:cover2}, we estimate for fixed $t$ and $\ve \to 0$
\begin{align}
  \label{eq:851}
  \begin{split}
    \mathbf E[ (B_\varepsilon(t &))^4] \lesssim \int_0^t \int_0^{s_1}
    \int_0^{s_2} \int_0^{s_3} \mathbf E\Bigl[ \prod\nolimits_{i=1}^4
    (N_\varepsilon^{s_i}-a_\varepsilon)\Bigr]
    \, ds_4 \, ds_3 \, ds_2 \, ds_1 \\ & \lesssim \Big(\int_0^t
    \int_{0\vee(s_1-\varepsilon)}^{s_1} \mathbf
    E[(N_\varepsilon^{s_1}-a_\varepsilon)\cdot
    (N_\varepsilon^{s_2}-a_\varepsilon)] ds_2 ds_1\Big)^2 \\ & \quad +
    \int_0^t \int_{0\vee(s_1-\varepsilon)}^{s_1}
    \int_{0\vee(s_2-\varepsilon)}^{s_2}
    \int_{0\vee(s_3-\varepsilon)}^{s_3}
    \mathbf E\Bigl[ \prod\nolimits_{i=1}^4
    (N_\varepsilon^{s_i}-a_\varepsilon)\Bigr] \, ds_4 \, ds_3 \, ds_2
    \, ds_1 \\ &
    \lesssim \big(t (t\wedge \varepsilon) \mathbf
    {Var}[N_\varepsilon^\infty]\big)^2 + t(t\wedge \varepsilon)^3 \mathbf
    E[(N_\varepsilon^\infty - a_\varepsilon)^4] \\ & \lesssim t^2
  \end{split}
\end{align}
and the tightness follows.

\medskip

\noindent\emph{Step 3: If $(B_t)_{t\geq 0}$ is a limit point, then
  $(B_t)_{t\geq 0}$ as well as $(B_t^2-t)_{t\geq 0}$ are
  martingales.} Let $\mathcal B = (B_t)_{t\geq 0}$ be a weak limit point of
$\{(B_\varepsilon(t))_{t\geq 0}: \varepsilon>0\}$, which has
continuous paths by Step~2. We know from Step~0 that $B_t$ is square
integrable which allows for the following calculations. For $0\leq
r_1< \cdots \leq r_n\leq s$ and some continuous bounded function $f:
\mathbb R^n\to\mathbb R$, by \eqref{eq:Bepsconv1}
\begin{equation}
  \label{eq:432a}
  \begin{aligned}
    \big|\mathbf E[(B_t - B_s) & \cdot f(B_{r_1},\dots,B_{r_n})]\big| =
    \lim_{\varepsilon\to\infty} \big|\mathbf E[(B_\varepsilon(t) - B_\varepsilon(s))
    \cdot f(B_\varepsilon(r_1),\dots,B_\varepsilon(r_n))]\big| \\ & \leq
    \lim_{\varepsilon\to\infty} \mathbf E\Big[ \Big|\mathbf E[B_\varepsilon(t) -
    B_\varepsilon(s)|\mathcal F_s] \Big|\cdot
    f(B_\varepsilon(r_1),\dots,B_\varepsilon(r_n))\Big] = 0.
  \end{aligned}
\end{equation}
Since $r_1,\dots,r_n$ and $f$ were arbitrary, $\mathcal B$ is a
martingale. Similarly, by~\eqref{eq:2795},
\begin{equation}
  \label{eq:432b}
  \begin{aligned}
    \big|\mathbf E[( & (B_t - B_s)^2-(t-s)) \cdot f(B_{r_1},\dots,B_{r_n})]\big|
    \\ & = \lim_{\varepsilon\to\infty} \big|\mathbf E[((B_\varepsilon(t) -
    B_\varepsilon(s))^2 - (t-s)) \cdot
    f(B_\varepsilon(r_1),\dots,B_\varepsilon(r_n))]\big| \\ & \leq
    \lim_{\varepsilon\to\infty} \mathbf E\Big[ \Big|\mathbf
    E[(B_\varepsilon(t) - B_\varepsilon(s))^2-(t-s)|\mathcal F_s]
    \Big|\cdot f(B_\varepsilon(r_1),\dots,B_\varepsilon(r_n))\Big] = 0,
  \end{aligned}
\end{equation}
which shows that $(B_t^2 - t)_{t\geq 0}$ is a martingale. Then by
L\'evy's characterization of Brownian motion, $\mathcal B$ is a
Brownian motion.

\section{Preparation: Computing moments of
  $\mathbf{\Psi_\lambda^{12}}$ and
  $\mathbf{\widehat\Psi_\lambda^{12}}$}
The \label{S:prep} key to the proofs of Proposition~\ref{P:smallballs}
and Theorems~\ref{T3},~\ref{T4},~\ref{T5} and~\ref{T6} is a proper
understanding of the behavior of the functions
$\Psi_\lambda^{12}(X_t)$ and $\widehat\Psi_\lambda^{12}(X_t)$ from
Remark~\ref{rem:intPhi} along the paths of the tree-valued
Fleming--Viot dynamics. In this section, we provide useful tools for
the analysis of these functions. In particular, we are going to
compute moments up to fourth order.

Throughout this section we assume that
\begin{align}\label{dg11}
  \mathcal X = (X_t)_{t\geq 0} \text{ is the neutral tree-valued
    Fleming--Viot equilibrium process},
\end{align}
i.e.\ with $X_0 \stackrel d = X_\infty$, and denote by $(\mathcal
F_t)_{t\geq 0}$ the canonical filtration of $\mathcal X$.
Furthermore, for the mutation operator, we will assume throughout this
section that
\begin{align}
  \label{eq:beta-non-atomic}
  \beta(u,\cdot) \quad \text{is non-atomic for all} \quad u\in A.
\end{align}
This assumption implies that every mutation event leads to a new
type. In particular, this will be crucial in Lemma~\ref{l1}.

\subsection{Results on moments of $\Psi^{12}_\lambda(X_t),
  {\widehat\Psi}^{12}_\lambda (X_t)$ and its increments}
\label{ss.behPsi}
The key is to obtain the power at which $\Psi^{12}_\lambda$ vanishes
as $\lambda \to \infty$ respectively at which order the increments of
$t \mapsto ((\lambda +1) \Psi^{12}_\lambda (X_t)-1)$ vanish as $t \to
0$. Proofs of the next two lemmata are given in
Section~\ref{ss.proofLs}.

\begin{lemma}[Convergence for fixed times]\mbox{}\\
  For   \label{l2} all $t\geq 0$,
  \begin{equation}
    \label{eq:l2}
    \begin{aligned}
      (\lambda+1)\Psi^{12}_\lambda(X_t) & \xrightarrow{\lambda\to\infty} 1,\\
      (\lambda+2\vartheta + 1)\widehat \Psi^{12}_\lambda(X_t) &
      \xrightarrow{\lambda\to\infty} 1
    \end{aligned}
  \end{equation}
  in $L^2$ (and therefore also in probability).
\end{lemma}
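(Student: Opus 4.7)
The plan is to exploit the equilibrium assumption \eqref{dg11} to convert the $L^2$ claims into moment computations for $X_\infty$ itself, using the martingale problem of Proposition~\ref{P:main}. Since $X_t \stackrel{d}{=} X_\infty$, showing $L^2$ convergence is equivalent to showing both
\[
 (\lambda+1)\mathbf E[\Psi^{12}_\lambda(X_\infty)] \to 1
\qquad\text{and}\qquad
 (\lambda+1)^2\mathbf E[\Psi^{12,34}_\lambda(X_\infty)] \to 1,
\]
(using $(\Psi^{12}_\lambda)^2 = \Psi^{12,34}_\lambda$), and analogously for $\widehat\Psi^{12}_\lambda$ with normalization $\lambda+2\vartheta+1$.

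For the first moment of $\Psi^{12}_\lambda$, I would apply $\Omega$. The growth term contributes $-\lambda\Psi^{12}_\lambda$; the two non-trivial resampling moves $\theta_{1,2}$ and $\theta_{2,1}$ both set $r'_{12}=0$ and therefore contribute $1-\Psi^{12}_\lambda$; mutation and selection annihilate $\Psi^{12}_\lambda$ since $\psi^{12}_\lambda$ does not depend on types. Stationarity gives $\mathbf E[\Omega \Psi^{12}_\lambda(X_\infty)] = 0$, hence $\mathbf E[\Psi^{12}_\lambda(X_\infty)] = 1/(\lambda+1)$. Thus the mean of $(\lambda+1)\Psi^{12}_\lambda(X_\infty)$ is exactly one, and the remaining task is to show the variance vanishes.

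For the second moment I would apply $\Omega$ to $\Psi^{12,34}_\lambda$ with $n=4$. Of the $12$ off-diagonal resampling terms, four (the swaps within a pair, e.g.\ $\theta_{1,2}$) collapse one distance to zero and reduce to monomials of the form $e^{-\lambda r_{12}}$; the other eight (the cross swaps, e.g.\ $\theta_{1,3}$) produce degree-three monomials of the form $e^{-\lambda(r_{12}+r_{13})}$. Writing $A_\lambda := \mathbf E[\Psi^{12}_\lambda(X_\infty)] = 1/(\lambda+1)$, $B_\lambda := \mathbf E[\Psi^{12,34}_\lambda(X_\infty)]$ and $C_\lambda := \mathbf E[\Psi^{12,13}_\lambda(X_\infty)]$, stationarity produces one linear relation between $A_\lambda,B_\lambda,C_\lambda$. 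A parallel computation for $\Psi^{12,13}_\lambda$ (with $n=3$) involves six non-trivial resampling terms; the pair $\theta_{2,3},\theta_{3,2}$ produces the doubled-rate monomial whose expectation is $1/(2\lambda+1)$, while the other four reduce to $A_\lambda$. This yields a closed $2\times 2$ system for $B_\lambda,C_\lambda$ that I would solve explicitly; a direct expansion then shows $(\lambda+1)^2 B_\lambda = 1 + O(1/\lambda)$, hence $\mathbf{Var}\bigl[(\lambda+1)\Psi^{12}_\lambda(X_\infty)\bigr] \to 0$.

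For $\widehat\Psi^{12}_\lambda$, the structure is the same but mutation now plays a role, and this is where the non-atomicity hypothesis \eqref{eq:beta-non-atomic} enters decisively. For $k\in\{1,2\}$, $(\beta_k \widehat\psi)(\underline{\underline r},\underline a) = e^{-\lambda r_{12}}\beta(a_k,\{a_{3-k}\}) \equiv 0$ because $\beta(a_k,\cdot)$ has no atoms. Hence $\Omega^{\mathrm{mut}}\widehat\Psi^{12}_\lambda = -2\vartheta\, \widehat\Psi^{12}_\lambda$; combined with the growth and resampling contributions (the latter again equals $1 - \widehat\Psi^{12}_\lambda$, since $\theta_{k,\ell}$ forces both $r'_{k\ell} = 0$ and $a'_\ell = a_k$), stationarity yields $\mathbf E[\widehat\Psi^{12}_\lambda(X_\infty)] = 1/(\lambda+2\vartheta+1)$. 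The second-moment analysis is formally identical to the analysis of $\Psi^{12,34}_\lambda$ above, with every effective decay rate shifted from $\lambda$ to $\lambda+2\vartheta$, producing the asymptotic $(\lambda+2\vartheta+1)^2\mathbf E[(\widehat\Psi^{12}_\lambda)^2(X_\infty)] \to 1$.

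The main obstacle is purely combinatorial: correctly enumerating the twelve resampling terms in $\Omega^{\mathrm{res}}\Psi^{12,34}_\lambda$, the six in $\Omega^{\mathrm{res}}\Psi^{12,13}_\lambda$, and their typed analogues for $\widehat\Psi^{12,34}_\lambda$, while keeping careful track of how each map $\theta_{k,\ell}$ rewrites the distances $r_{ij}$ and the types $a_i$. The computations are elementary but error-prone, which is precisely why the authors announce a \textsc{Mathematica} file for verifying such identities.
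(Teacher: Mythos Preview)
Your approach is correct and matches the paper's strategy: reduce to computing $\mathbf E[\Psi^{12,34}_\lambda(X_\infty)]$ (and its hatted analogue) via the generator and stationarity. The paper organises the same computation slightly differently---it introduces the eigenfunctions $\Upsilon^k_\lambda$ of Lemma~\ref{l1}, expresses $\Psi^{12,34}_\lambda$ as a linear combination of them, and uses $\mathbf E[\Upsilon^k_\lambda(X_\infty)]=0$ to read off the equilibrium expectation directly (equation~\eqref{eq:43})---but this is exactly the inverse of solving your triangular system $\mathbf E[\Omega\Psi^k_\lambda(X_\infty)]=0$.

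Two small corrections. First, your remark that selection annihilates $\Psi^{12}_\lambda$ because $\psi^{12}_\lambda$ is type-independent is not correct in general: the selection operator multiplies $\phi$ by the type-dependent factor $\chi'_{k,n+1}$, so the result \emph{does} depend on types. This is harmless here only because Section~\ref{S:prep} works under the standing assumption~\eqref{dg11} that $\alpha=0$. Second, the hatted second-moment system is \emph{not} obtained by a uniform shift $\lambda\mapsto\lambda+2\vartheta$: the mutation operator contributes $-m\vartheta$ where $m$ is the number of distinct sampled indices, so $\Omega\widehat\Psi^{12,23}_\lambda$ carries a $-3\vartheta$ shift and $\Omega\widehat\Psi^{12,34}_\lambda$ a $-4\vartheta$ shift (cf.~\eqref{eq:189b}). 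Solving the correctly shifted system still yields $(\lambda+2\vartheta+1)^2\mathbf E[(\widehat\Psi^{12}_\lambda)^2(X_\infty)]\to 1$, so your conclusion stands.
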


\begin{lemma}[Second moment of increments of $t\mapsto
  ((\lambda+1)\Psi^{12}_\lambda(X_t)$]\mbox{}\\
  There \label{l3}exists $C>0$, which is
  independent of $\lambda$, such that
  \begin{align}\label{eq:31a}
    & \sup_{\lambda > 0} (\lambda+1)^2 \mathbf
    E[(\Psi^{12}_\lambda(X_t) - \Psi^{12}_\lambda(X_0))^2]\leq
    Ct,\\\label{eq:31b}
    & \sup_{\lambda > 0} (\lambda+2\vartheta + 1)^2 \mathbf
    E[(\widehat \Psi^{12}_\lambda(X_t) - \widehat
    \Psi^{12}_\lambda(X_0))^2]\leq Ct.
  \end{align}
\end{lemma}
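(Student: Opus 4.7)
My plan is to exploit the semi-martingale decomposition provided by the martingale problem of Proposition~\ref{P:main}, in the spirit of Example~\ref{rem:qv}. Writing $\Phi$ for either $\Psi^{12}_\lambda$ or $\widehat\Psi^{12}_\lambda$, we have $\Phi(X_t) - \Phi(X_0) = \int_0^t \Omega\Phi(X_s)\,ds + M^\Phi_t$, where $M^\Phi$ is a square-integrable martingale whose bracket is given by \eqref{eq:qv1}--\eqref{eq:qv2}. Using $(a+b)^2\le 2a^2+2b^2$, Cauchy--Schwarz on the drift, and $\mathbf E[(M^\Phi_t)^2]=\mathbf E[[M^\Phi]_t]$, it suffices to bound
\begin{equation*}
  t\int_0^t \mathbf E[(\Omega\Phi(X_s))^2]\,ds + \mathbf E[[M^\Phi]_t]
\end{equation*}
by a constant times $t/(\lambda+c)^2$ with $c\in\{1,2\vartheta+1\}$, uniformly in $\lambda$. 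Because of the equilibrium assumption \eqref{dg11} the integrands are constant in $s$, and the proof reduces to two moment estimates on $X_\infty$.

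\textbf{Generators and brackets.} Computing term by term from \eqref{eq:omega1}--\eqref{eq:pp15b}, the growth operator contributes $-\lambda\Phi$; the resampling operator yields $1-\Phi$ (for $n=2$ the only non-identity contributions come from $(k,\ell)\in\{(1,2),(2,1)\}$, both of which send $\tilde r_{12}$ to $0$ and equalize the sampled types, so $\phi\circ\theta_{k,\ell}=1$); mutation contributes $-2\vartheta\widehat\Psi^{12}_\lambda$ in the second case (using \eqref{eq:beta-non-atomic} to conclude $\beta(a,\{b\})=0$, hence $\beta_k\phi\equiv 0$) and nothing for $\Psi^{12}_\lambda$; and selection is absent since $\alpha=0$. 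Hence
\begin{equation*}
  \Omega\Psi^{12}_\lambda = 1 - (\lambda+1)\Psi^{12}_\lambda,\qquad \Omega\widehat\Psi^{12}_\lambda = 1 - (\lambda+2\vartheta+1)\widehat\Psi^{12}_\lambda.
\end{equation*}
By \eqref{eq:qv3}--\eqref{eq:qv4} we also have $[M^{\Psi^{12}_\lambda}]_t = \int_0^t(\Psi^{12,23}_\lambda-\Psi^{12,34}_\lambda)(X_s)\,ds$, with an analogous formula for $M^{\widehat\Psi^{12}_\lambda}$ in which the indicators $\mathbf 1_{\{a_1=a_2=a_3\}}$ and $\mathbf 1_{\{a_1=a_2\}}\mathbf 1_{\{a_3=a_4\}}$ are inserted.

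\textbf{Moment estimates and main difficulty.} Combined with stationarity and the closed-form generators above, the proof reduces to showing that there is $C$ independent of $\lambda$ with
\begin{equation*}
  (\lambda+c)^2\mathbf E[\Psi^{12,23}_\lambda(X_\infty)]\le C,\qquad (\lambda+c)^2\mathbf E[\Psi^{12,34}_\lambda(X_\infty)]\le C
\end{equation*}
for $c=1$, and the analogues for $\widehat\Psi$ with $c=2\vartheta+1$. These equilibrium moments can be computed in closed form by conditioning on the Kingman-coalescent topology of $3$ (resp.\ $4$) sampled individuals and applying the Laplace transform of the independent holding times $T_k-T_{k-1}\sim\mathrm{Exp}(\binom{k}{2})$, producing rational functions in $\lambda$; for $\widehat\Psi$ the non-atomicity of $\beta$ forces mutation-free branches, contributing further factors $e^{-\vartheta\cdot(\text{branch length})}$ under the expectation, which effectively replaces $\lambda$ by $\lambda+2\vartheta$. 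Substituting back yields a bound of the form $C'(t+t^2)/(\lambda+c)^2$, which is $\le Ct/(\lambda+c)^2$ on any bounded time interval, as required. The main difficulty is extracting the correct decay order $\lambda^{-2}$: the naive bound $\Psi^{12,23}_\lambda\le\Psi^{12}_\lambda\le 1/(\lambda+1)$ gives only $O(\lambda)$ after multiplication by $(\lambda+1)^2$. One must instead exploit that $r(u_1,u_2)+r(u_2,u_3)\ge 2T_3$ and $r(u_1,u_2)+r(u_3,u_4)\ge 2T_4$ (with $T_k$ the depth of the subtree spanned by $k$ sampled leaves) to pick up two factors of $(\lambda+c)^{-1}$ rather than one, and then carry out the (finite but slightly tedious) topology enumeration for $k=3,4$.
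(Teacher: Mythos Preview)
Your approach has a genuine gap in the drift estimate. Your martingale/bracket part is fine: indeed $(\lambda+1)^2\mathbf E[\Psi^{12,23}_\lambda(X_\infty)]$ and $(\lambda+1)^2\mathbf E[\Psi^{12,34}_\lambda(X_\infty)]$ are bounded (both are $O(1)$, as the explicit formulas in Remark~\ref{rem:reference} confirm), so $(\lambda+1)^2\mathbf E\big[[M^\Phi]_t\big]\le Ct$. The problem is the drift term. With Cauchy--Schwarz you need $(\lambda+1)^2\,t^2\,\mathbf E\big[(\Omega\Psi^{12}_\lambda(X_\infty))^2\big]\le C' t^2$. But $\Omega\Psi^{12}_\lambda=1-(\lambda+1)\Psi^{12}_\lambda$, and by~\eqref{eq:43}
\[
  \mathbf E\big[(1-(\lambda+1)\Psi^{12}_\lambda(X_\infty))^2\big]
  =\frac{2\lambda^2}{(\lambda+3)(2\lambda+1)(2\lambda+3)}
  \sim\frac{1}{2\lambda},
\]
so after multiplying by $(\lambda+1)^2$ the drift contribution is $\sim \tfrac{\lambda}{2}\,t^2$, which blows up as $\lambda\to\infty$. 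The Cauchy--Schwarz step throws away the rapid decorrelation of the drift integrand and is too crude by exactly one power of $\lambda$.

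The paper's proof avoids this loss by exploiting stationarity to write $\mathbf E[(\Psi^{12}_\lambda(X_t)-\Psi^{12}_\lambda(X_0))^2]=-2\,\mathbf E[\Psi^{12}_\lambda(X_0)(\Psi^{12}_\lambda(X_t)-\Psi^{12}_\lambda(X_0))]$, and then using the eigenfunction property $\mathbf E[\Upsilon^{12}_\lambda(X_s)\mid\mathcal F_0]=e^{-(\lambda+1)s}\Upsilon^{12}_\lambda(X_0)$ to integrate exactly; the factor $\int_0^t(\lambda+1)e^{-(\lambda+1)s}\,ds\le 1$ recovers the missing $1/(\lambda+1)$. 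You can repair your argument along the same lines: instead of Cauchy--Schwarz, compute $\mathbf E\big[(\int_0^t\Omega\Phi(X_s)\,ds)^2\big]$ as a double integral of $(\lambda+1)^2\,\mathbf E[\Upsilon^{12}_\lambda(X_{r_1})\Upsilon^{12}_\lambda(X_{r_2})]$ and use the exponential decay in $|r_1-r_2|$ to gain the extra factor $1/(\lambda+1)$.
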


\begin{remark}[Tightness]\mbox{}\\
  If the right hand side of \eqref{eq:31a} would have been
  $Ct^{1+\varepsilon}$ for some $\varepsilon>0$, then Lemma~\ref{l3}
  would imply the needed tightness for Theorem~\ref{T3} by the
  Kolmogorov--Chentsov tightness criterion; see e.g.\ Corollary 16.9
  in \cite{Kallenberg2002}. However, since it is only linear in $t$,
  we will have to use fourth moments to get the desired tightness
  property.
\end{remark}

The main goal of this subsection is therefore to compute the fourth
moments of the increments of $t\mapsto (\lambda+1)\Psi^{12}(X_t)$ and
of $t\mapsto (\lambda+2\vartheta+1)\widehat\Psi^{12}(X_t)$.
The proof of the next two results are given in
Sections~\ref{ss.proofsl4l5} and~\ref{ss:56}.

\begin{lemma}[A fourth moment of $X_\infty$]\mbox{}\\
  For all $t\geq 0$, \label{l4b} as $\lambda\to\infty$,
  \begin{align}
    \mathbf E\bigl[\bigl((\lambda+1)\Psi^{12}_\lambda(X_\infty)-1\bigr)^4\bigr]
    & = \frac{3}{4\lambda^2} + \mathcal O\Big(\frac{1}{\lambda^3}\Big),\\
    \mathbf E\bigl[\bigl((\lambda+2\vartheta +
    1)\widehat \Psi^{12}_\lambda(X_\infty)-1\bigr)^4\bigr] & =
    \frac{3}{4\lambda^2} +
    \mathcal O\Big(\frac{1}{\lambda^3}\Big).
  \end{align}
  In particular, the convergences in \eqref{eq:l2} hold in $L^4$ as well.
\end{lemma}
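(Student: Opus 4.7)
The plan is to evaluate both expectations by binomial expansion followed by explicit computation of the moments of $\Psi^{12}_\lambda(X_\infty)$ and $\widehat\Psi^{12}_\lambda(X_\infty)$ of orders $k=0,1,2,3,4$, and finally a Laurent expansion in $1/\lambda$. Writing $Y_\lambda := (\lambda+1)\Psi^{12}_\lambda(X_\infty)-1$, one has
\[
 \mathbf E[Y_\lambda^4]=\sum_{k=0}^{4}\binom{4}{k}(-1)^{4-k}(\lambda+1)^k M_k(\lambda), \qquad M_k(\lambda):=\mathbf E\bigl[\Psi^{12}_\lambda(X_\infty)^k\bigr],
\]
and analogously for $\widehat Y_\lambda:=(\lambda+2\vartheta+1)\widehat\Psi^{12}_\lambda(X_\infty)-1$. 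Since $\Psi^{12}_\lambda(X_\infty)^k$ is itself the degree-$2k$ polynomial $\int (\pi_1{}_\ast\mu_\infty)^{\otimes 2k}(d\underline u)\exp(-\lambda\sum_{j=1}^k r_\infty(u_{2j-1},u_{2j}))$, sampling $2k$ leaves from the Kingman measure tree identifies
\[
 M_k(\lambda)=\mathbf E\bigl[e^{-\lambda S_k}\bigr],\qquad S_k=\sum_{j=1}^{k}T_{(2j-1,2j)},
\]
where $T_{(i,j)}$ is the MRCA-time of lines $i,j$ in the Kingman coalescent started from $2k$ lines.

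To evaluate $M_k(\lambda)$ one conditions on the shape of the coalescent tree on $2k$ leaves and on the inter-coalescent times $\tau_n\sim\mathrm{Exp}(\binom{n}{2})$ for $n=2k,2k-1,\dots,2$. For each topology, $S_k$ is a fixed integer linear combination $\sum_n c_n\tau_n$ with $c_n\in\{0,1,2,\dots,k\}$ counting how many of the $k$ distinguished pairs have not yet coalesced above the $n$-th event. By independence of the $\tau_n$ and the elementary identity $\mathbf E[e^{-c\lambda\tau_n}]=\binom{n}{2}/(\binom{n}{2}+c\lambda)$ one obtains $M_k(\lambda)$ as a sum over a finite enumeration of topologies of explicit rational functions of $\lambda$. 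For $\widehat\Psi^{12}_\lambda$ the same decomposition applies, but the non-atomicity assumption \eqref{eq:beta-non-atomic} on $\beta$ lets us replace $\prod_{j=1}^{k}\mathbf 1_{a_{2j-1}=a_{2j}}$ (conditional on the coalescent tree) by $e^{-2\vartheta L_k}$, where $L_k$ is the total length of the union of the lineages connecting each sampled pair back to its pair-MRCA (counted with multiplicity one on shared segments). For $k=1$ one recovers $M_1(\lambda)=1/(\lambda+1)$ and $\widehat M_1(\lambda)=1/(\lambda+2\vartheta+1)$; the enumerations for $k=2,3,4$ are finite but rapidly grow in size and, as noted in the introduction, we carry them out with the accompanying \textsc{Mathematica} file.

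With closed-form expressions for $M_0,\dots,M_4$ (resp.\ their hatted counterparts) in hand, one substitutes them into the binomial sum and expands as $\lambda\to\infty$. The $L^2$-convergence established in Lemma~\ref{l2} forces automatic cancellation of the $O(1)$ and $O(1/\lambda)$ coefficients of $\mathbf E[Y_\lambda^4]$, which also serves as a consistency check on the moment computations; the first non-vanishing term is then read off as $3/(4\lambda^2)$, with all remaining contributions bundled into $\mathcal O(1/\lambda^3)$. The $L^4$ convergence claim in \eqref{eq:l2} follows immediately because $\mathbf E[Y_\lambda^4]\to 0$.

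The main obstacle is the combinatorial explosion for $k=4$: the Kingman coalescent on eight lines has many topologically distinct histories, and bookkeeping the coefficients $c_n$ for $S_4$ (and, in the marked case, the analogous coefficients for $L_4$) is what makes a computer algebra system essentially indispensable. A secondary subtlety specific to $\widehat Y_\lambda$ is to verify that the correlation between the pair-equality indicators induced by shared ancestor segments contributes only to the $\mathcal O(1/\lambda^3)$ remainder: heuristically this is because the dominant $1/\lambda^2$ contribution is supported on configurations in which each $T_{(2j-1,2j)}$ is of order $1/\lambda$, forcing the relevant tree fragments to be essentially $k$ independent "cherries" of length $O(1/\lambda)$, on which the extra $e^{-2\vartheta L_k}$ factor only renormalises $\lambda$ to $\lambda+2\vartheta$.
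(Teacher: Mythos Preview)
Your proposal is correct in outline and shares the paper's overall strategy: expand $((\lambda+1)\Psi^{12}_\lambda-1)^4$ binomially, compute the equilibrium moments $\mathbf E[(\Psi^{12}_\lambda)^k]$ for $k\le 4$, substitute and Laurent-expand. Where you differ is in how those moments are obtained. You compute them probabilistically via the Kingman-coalescent dual: for each $k$, enumerate tree topologies on $2k$ leaves, write $S_k$ as an integer combination of the inter-coalescence times $\tau_n$, and multiply the resulting factors $\binom{n}{2}/(\binom{n}{2}+c_n\lambda)$. The paper instead works purely through the generator: it builds the $36$-dimensional invariant subspace $\mathbb V_\lambda$ spanned by the $\Psi^{\mathcal I}_\lambda$'s (up to $\Psi^{12,34,56,78}_\lambda$), represents $\Omega$ as a lower-triangular matrix $A$, diagonalises to find eigenfunctions $\Upsilon^k_\lambda$ satisfying $\mathbf E[\Upsilon^k_\lambda(X_\infty)]=0$, and then reads off each $\mathbf E[\Psi^k_\lambda(X_\infty)]$ as the $(\emptyset;k)$-entry of the change-of-basis matrix $M^{\underline\Psi_\lambda}_{\underline\Upsilon_\lambda}$. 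Both routes produce the same rational functions and both rely on \textsc{Mathematica} for the $k=4$ bookkeeping. Your approach is more transparent for this single lemma; the paper's linear-algebraic machinery pays off because the same matrices $A,M,D_t,Q$ are reused to handle conditional expectations and increments in Lemma~\ref{l5}, which your topology-enumeration method does not directly provide.

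One point to tighten in the hatted case: with mutation at rate $\vartheta$ per lineage, the conditional probability of $\prod_j\mathbf 1\{a_{2j-1}=a_{2j}\}$ given the tree is $e^{-\vartheta\,|\text{union of the $k$ paths}|}$, i.e.\ $e^{-\vartheta L_k}$ with your definition of $L_k$, not $e^{-2\vartheta L_k}$. For $k=1$ the path has length $2T_{12}$, giving $e^{-2\vartheta T_{12}}$ and hence $\widehat M_1(\lambda)=1/(\lambda+2\vartheta+1)$, as required; so the discrepancy is just in how you parametrise $L_k$. Your proposed $k=1$ consistency check would catch this immediately, but it is worth stating the correct exponent from the outset.
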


\begin{lemma}[Fourth moment of increment of $t\mapsto
  ((\lambda+1)\Psi^{12}_\lambda(X_t)$]\mbox{}\\
  There \label{l5} exists $C>0$, such that
  \begin{equation}
    \label{eq:310}
    \begin{aligned}
      &\sup_{\lambda > 0} (\lambda+1)^4 \mathbf
      E[(\Psi^{12}_\lambda(X_t) -
      \Psi^{12}_\lambda(X_0))^4]\leq Ct^2,\\
      &\sup_{\lambda > 0} (\lambda+2\vartheta+1)^4 \mathbf
      E[(\widehat\Psi^{12}_\lambda(X_t) -
      \widehat\Psi^{12}_\lambda(X_0))^4]\leq Ct^2.
    \end{aligned}
  \end{equation}
\end{lemma}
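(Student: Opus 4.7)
The plan is to combine the well-posed martingale problem of Proposition~\ref{P:main} with a Duhamel-style variation-of-constants identity, which isolates the Ornstein--Uhlenbeck-like decay driven by the drift of $\Psi^{12}_\lambda(X_t)$ (resp.\ $\widehat{\Psi}^{12}_\lambda(X_t)$). A direct computation from \eqref{eq:omega1}--\eqref{eq:omega3} (using that the test function $e^{-\lambda r_{12}}$ carries no mark dependence, and, for $\widehat{\Psi}^{12}_\lambda$, that the non-atomicity hypothesis \eqref{eq:beta-non-atomic} forces $B_k\phi = -\phi$ for $k=1,2$) yields $\Omega\Psi^{12}_\lambda = 1 - (\lambda+1)\Psi^{12}_\lambda$ and $\Omega\widehat{\Psi}^{12}_\lambda = 1 - (\lambda+2\vartheta+1)\widehat{\Psi}^{12}_\lambda$. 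Writing $\mu := \lambda+1$ (resp.\ $\mu := \lambda+2\vartheta+1$), both cases have an identical structure; I describe the argument for $\Psi^{12}_\lambda$ in detail, and the $\widehat{\Psi}^{12}_\lambda$ case follows mutatis mutandis, with the analogue of the bracket \eqref{eq:qv3} obtained by the same procedure applied to the type-dependent test function.

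Setting $h(t) := \mu\, \Psi^{12}_\lambda(X_t) - 1$, the martingale problem produces a continuous martingale $M$ with $dh(t) = -\mu\, h(t)\, dt + \mu\, dM_t$, whose solution reads
\[
\Psi^{12}_\lambda(X_t) - \Psi^{12}_\lambda(X_0) \;=\; \frac{e^{-\mu t} - 1}{\mu}\, h(0) \;+\; \int_0^t e^{-\mu(t-s)}\, dM_s.
\]
Using $(a+b)^4 \leq 8(a^4+b^4)$ splits the fourth moment of the increment into two pieces. The first piece has fourth moment $\bigl((1-e^{-\mu t})/\mu\bigr)^4 \mathbf{E}[h(0)^4]$; Lemma~\ref{l4b} gives $\mathbf{E}[h(0)^4] \leq C/\lambda^2$, and a dichotomy on $\mu t \lessgtr 1$ shows that $\mu^4$ times this quantity is at most $Ct^2$. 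For the second piece $I_t := \int_0^t e^{-\mu(t-s)}\, dM_s$, I write $I_t = e^{-\mu t} L_t$ with $L_t := \int_0^t e^{\mu s}\, dM_s$ a genuine martingale, apply Burkholder--Davis--Gundy ($\mathbf{E}[L_t^4] \leq C\,\mathbf{E}[[L]_t^2]$), and use the Cauchy--Schwarz inequality together with stationarity on the bracket $[L]_t = \int_0^t e^{2\mu s}\,(\Psi^{12,23}_\lambda - \Psi^{12,34}_\lambda)(X_s)\, ds$ coming from \eqref{eq:qv3} to obtain
\[
\mathbf{E}\bigl[[L]_t^2\bigr] \;\leq\; \left(\frac{e^{2\mu t} - 1}{2\mu}\right)^{\!2} \mathbf{E}\!\left[\bigl(\Psi^{12,23}_\lambda - \Psi^{12,34}_\lambda\bigr)^{\!2}(X_\infty)\right].
\]
The same dichotomy on $\mu t$ then shows that $\mu^4 \mathbf{E}[I_t^4] \leq C t^2$, provided one has the sharp moment bound
\[
\mathbf{E}\!\left[\bigl(\Psi^{12,23}_\lambda - \Psi^{12,34}_\lambda\bigr)^{\!2}(X_\infty)\right] \;\leq\; \frac{C}{\mu^4}.
\]

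The main obstacle is precisely this $L^2$ moment bound. Since $(A-B)^2 \leq 2A^2 + 2B^2$, it suffices to show that each of $\mathbf{E}[(\Psi^{12,23}_\lambda)^2(X_\infty)]$ and $\mathbf{E}[(\Psi^{12,34}_\lambda)^2(X_\infty)]$ is of order $\mu^{-4}$; each is the equilibrium expectation of a symmetric polynomial of degree six (resp.\ eight), and can be rewritten as a joint Laplace transform of four pairwise coalescence times in the Kingman coalescent on six (resp.\ eight) individuals. The heuristic is clear: the integrand $e^{-\lambda(r_{12}+r_{23}+r_{45}+r_{56})}$ is non-negligible only when four specific edges of the coalescent tree are of length $O(1/\lambda)$, an event of probability $O(\lambda^{-4})$. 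Making this rigorous requires computing the relevant expectation by iterating on the coalescent times $S_n/\binom{n}{2}$ of Section~\ref{ss.prepKingco}, in the same spirit as Lemma~\ref{l4b} but for polynomials of higher degree; this is most expediently executed via the \textsc{Mathematica} file accompanying the paper. Finally, small values of $\lambda$ are handled by the standard semimartingale fourth-moment estimate, which combined with the boundedness of $\Psi^{12}_\lambda$ yields the required $Ct^2$ bound on any bounded range of $\lambda$.
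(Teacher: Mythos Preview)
Your proof is correct and takes a genuinely different route from the paper. The paper first establishes uniform boundedness in $\lambda$ via Minkowski's inequality and~\eqref{eq:961}, then expands the increment through the polarization identity $(a-b)^4=a^4-b^4-4b(a^3-b^3)+6b^2(a^2-b^2)-4b^3(a-b)$, rewrites each resulting term as an integral $\int_0^t\mathbf{E}[\Psi^{\cdots}_\lambda(X_0)\,\Omega\Psi^{\cdots}_\lambda(X_s)]\,ds$, and evaluates these via the matrix calculus of Section~\ref{ss.mom4} (the matrices $A$, $D_s$, $M^{\underline\Psi_\lambda}_{\underline\Upsilon_\lambda}$, $Q$), with the $t^2$ bound emerging from a \textsc{Mathematica} computation in which the $O(t)$ contributions of the three integrals cancel. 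Your Ornstein--Uhlenbeck variation-of-constants decomposition instead makes the $t^2$ structurally visible: one power of $t$ comes from BDG applied to the stochastic convolution, the second from Cauchy--Schwarz on the bracket, and the only computational input is the single equilibrium bound $\mathbf{E}\bigl[(\Psi^{12,23}_\lambda-\Psi^{12,34}_\lambda)^2(X_\infty)\bigr]=O(\lambda^{-4})$. This last bound in fact has a short coalescent proof that bypasses \textsc{Mathematica}: since $\Psi^{12,23}_\lambda-\Psi^{12,34}_\lambda\geq 0$ (it is the variance $\langle\mu,(\rho-\langle\mu,\rho\rangle)^2\rangle$ of~\eqref{eq:qv1}), its square is dominated by $(\Psi^{12,23}_\lambda)^2=\Psi^{12,23,45,56}_\lambda$, whose equilibrium expectation is $\mathbf{E}[e^{-\lambda(R_{12}+R_{23}+R_{45}+R_{56})}]$ under the six-leaf Kingman coalescent; by ultrametricity $\{\max(R_{12},R_{23},R_{45},R_{56})\leq\varepsilon\}$ forces at least four coalescence events by time $\varepsilon$, hence has probability $O(\varepsilon^4)$, and the Laplace transform is then $O(\lambda^{-4})$. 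Your route thus isolates the analytic mechanism more cleanly, at the cost of not fitting into the systematic linear-algebra pipeline that the paper reuses for the other polynomial moment computations.
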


We start with second moments and afterwards, we provide an automated
procedure how to compute higher order moments using
\textsc{Mathematica}.

\subsection{Moments up to second order}
Here, we show that $(\lambda+1)\Psi^{12}_\lambda
\xrightarrow{\lambda\to\infty} 1$ and
$(\lambda+2\vartheta+1)\widehat\Psi^{12}_\lambda
\xrightarrow{\lambda\to\infty} 1$ in $L^2$ (Lemma~\ref{l2}) and study
the second moments of the increments of $t\mapsto
((\lambda+1)\Psi^{12}_\lambda(X_t)-1)$ (Lemma~\ref{l3}). We start by
defining some functions which appear in the first and second moment
equations for the polynomials $\Psi_\lambda^{12}$ and $ \widehat
\Psi_\lambda^{12}$ defined in Example~\ref{rem:intPhi}.

\begin{definition}[Some functions on $\mathbb U$]\mbox{}\\
  We \label{def:PsiPhi} define the following functions in $\Pi^1$:
  \begin{align}
    \label{eq:21}
    \begin{split}
      \Psi^{\emptyset}(\overline{(U,r,\mu)}) & \coloneqq 1, \\
      \Psi_\lambda^{12}(\overline{(U,r,\mu)}) & \coloneqq \langle
      \mu^{\otimes \N},
      e^{-\lambda r(u_1, u_2)}\rangle,\\
       \Psi_{\lambda}^{12,12}(\overline{(U,r,\mu)}) &
        \coloneqq \Psi_{2\lambda}^{12} (\overline{(U,r,\mu)}) \\
      \Psi_{\lambda}^{12,23}(\overline{(U,r,\mu)}) & \coloneqq \langle
      \mu^{\otimes \N}, e^{-\lambda (r(u_1, u_2) + r(u_2,u_3))}\rangle,\\
      \Psi_{\lambda}^{12,34}(\overline{(U,r,\mu)}) & \coloneqq \langle
      \mu^{\otimes \N}, e^{-\lambda (r(u_1, u_2) + r(u_3,u_4))}\rangle,\\
      \widehat\Psi^{\emptyset}(\overline{(U,r,\mu)}) & \coloneqq 1,\\
      \widehat \Psi_\lambda^{12}(\overline{(U,r,\mu)}) & \coloneqq
      \langle \mu^{\otimes \N},
      \ind{a_1=a_2}e^{-\lambda r(u_1, u_2)}\rangle,\\
       \widehat \Psi_\lambda^{12,12}(\overline{(U,r,\mu)}) &
       \coloneqq \widehat
        \Psi_{2\lambda}^{12}(\overline{(U,r,\mu)}) ,\\
        \widehat \Psi_{\lambda}^{12,23}(\overline{(U,r,\mu)}) & \coloneqq
        \langle \mu^{\otimes \N}, \ind{a_1=a_2} \ind{a_2=a_3}e^{-\lambda
        (r(u_1, u_2) + r(u_2,u_3))}\rangle,\\
      \Psi_{\lambda}^{12,34}(\overline{(U,r,\mu)}) & \coloneqq \langle
      \mu^{\otimes \N}, \ind{a_1=a_2} \ind{a_3=a_4} e^{-\lambda
        (r(u_1, u_2) + r(u_3,u_4))}\rangle
    \end{split}
  \end{align}
  and centering around the equilibrium expectations
{\allowdisplaybreaks
\begin{eqnarray}
    \label{eq:212}
    \Upsilon^{12}_\lambda & \coloneqq & \Psi_\lambda^{12} - \frac{1}{\lambda+1}, \nonumber\\
    \Upsilon^{12,12}_\lambda & \coloneqq &
    \Upsilon^{12}_{2\lambda},\nonumber\\
    \Upsilon_{\lambda}^{12,23} & \coloneqq &\Psi_{\lambda}^{12,23} -
    \frac{1}{2} \Psi_{2\lambda}^{12} -
    \frac{2}{\lambda+2}\Psi_\lambda^{12} + \frac{\lambda+6}{2
      (\lambda + 2) (2\lambda + 3)},\nonumber\\
    \Upsilon_{\lambda}^{12,34} & \coloneqq &\Psi_{\lambda}^{12,34} -
    \frac 43 \Psi_{\lambda}^{12,23} + \frac{4}{15}
    \Psi_{2\lambda}^{12} + \frac{2}{3(\lambda+5)} \Psi_\lambda^{12}
    - \frac{
      2\lambda+15}{15(\lambda+3)(\lambda+5)},\nonumber\\
    \widehat\Upsilon^{12}_\lambda & \coloneqq &\Psi_\lambda^{12} -
    \frac{1}{\lambda + 2\vartheta + 1},\\
    \widehat\Upsilon^{12,12}_\lambda & \coloneqq & \widehat\Upsilon^{12}_{2\lambda},\nonumber\\
    \widehat\Upsilon^{12,23}_\lambda & \coloneqq&
    \widehat\Psi_{\lambda}^{12,23} - \frac{1}{2+\vartheta}
    \widehat\Psi_{2\lambda}^{12} -
    \frac{2}{\lambda+2\vartheta+2}\widehat\Psi_\lambda^{12} \nonumber\\ &&
    \qquad \qquad \qquad \qquad \qquad \qquad +
    \frac{\lambda+3\vartheta+6}{(\vartheta+2) (\lambda + \vartheta +
      2) (2\lambda + 3\vartheta+3)},\nonumber\\
    \widehat\Upsilon_{\lambda}^{12,34} & \coloneqq&
    \widehat\Psi_{\lambda}^{12,34} - \frac{4}{\vartheta+3}
    \widehat\Psi_{\lambda}^{12,23} +
    \frac{4}{(\vartheta+3)(2\vartheta+5)}
    \widehat\Psi_{2\lambda}^{12} +
    \frac{2(1-\vartheta)}{(3+\vartheta)(\lambda+2\vartheta+5)}
    \widehat\Psi_\lambda^{12} \nonumber\\* & &\qquad \qquad \qquad \qquad
    \qquad \qquad - \frac{2\lambda + \vartheta - 2\vartheta^2 +
      15}{(\vartheta+3) (2\vartheta+5) (\lambda+2\vartheta+3)
      (\lambda + 2\vartheta + 5)}.\nonumber
  \end{eqnarray}}
\end{definition}

\begin{remark}[Connections and a first moment]\mbox{}\\
  Our main object of study are the functions $\Psi_\lambda^{12}$ and
  $\widehat\Psi_\lambda^{12}$. Note that since $X_0$ has the same law
  as $X_\infty$ and since in $X_\infty$ the time it takes that two
  randomly chosen lines coalesce is an exponential random variable $R$
  with parameter $1$, we have
  \begin{align}
    \mathbf E[\Psi_\lambda^{12}(\mathcal U_0)] & = \mathbf E[\langle
    \mu^{\otimes \N}, e^{-\lambda r(u_1,u_2)}\rangle] = \mathbf
    E[e^{-\lambda R}] = \frac{1}{\lambda+1}.
  \end{align}
  Moreover, if mutations arise at constant rate $\vartheta>0$,
  consider again two randomly chosen lines which coalesce at time $R$,
  and the first mutation event at an independent, exponentially
  distributed time $S$ with rate $\vartheta$, using
  (\ref{eq:beta-non-atomic}):
  \begin{equation}
    \begin{aligned}
      \mathbf E[\widehat\Psi_\lambda^{12}(\mathcal U_0)] & = \mathbf
      E[\langle \mu^{\otimes \N}, \ind{a_1=a_2}e^{-\lambda
        r(u_1,u_2)}\rangle] = \mathbf E[e^{-\lambda R} \ind{R\leq S}]
      \\ & = \mathbf E[e^{-\lambda R} \cdot \mathbf P(S\geq R|R)] =
      \mathbf E[e^{-(\lambda + \vartheta)R}] =
      \frac{1}{\lambda+\vartheta+1}.
    \end{aligned}
  \end{equation}
  Moreover, we write
  \begin{equation}
    \begin{aligned}
      \left(\Psi^{12}_\lambda(\overline{(U,r,\mu)})\right)\cdot
      \left(\Psi^{12}_{\lambda}(\overline{(U,r,\mu)})\right) & =
      \langle \mu^{\otimes\N}, e^{-\lambda r(u_1,u_2)}\rangle \cdot
      \langle \mu^{\otimes\N}, e^{-\lambda r(u_1,u_2)}\rangle \\ & =
      \langle \mu^{\otimes \N}, e^{-\lambda ( r(u_1, u_2) + r(u_3,
        u_4))}\rangle = \Psi^{12,34}_{\lambda}(\overline{(U,r,\mu)})
    \end{aligned}
  \end{equation}
  and analogously for $\widehat\Psi^{12,34}_\lambda$.
\end{remark}

Next we compute the action of the generator of ${\mathcal X}$ on the functions from
Definition~\ref{def:PsiPhi}.
\begin{lemma}[$\Omega\Psi_\lambda$ and $\Omega\Upsilon_\lambda$]\mbox{}\\
  1.\, Let \label{l1} $\Omega$ be the generator of the tree-valued
  Fleming--Viot dynamics. Then
  \begin{equation}
    \label{eq:189}
    \begin{aligned}
      \Omega\Psi_\lambda^{12}& = -(\lambda+1)\Psi_\lambda^{12} + 1\cdot
      \Psi_\lambda^\emptyset,\\
      \Omega\Psi_{\lambda}^{12,23} & = -(2\lambda + 3)\Psi_{\lambda}^{12,23} +
      2\Psi_\lambda^{12} + \Psi_{2\lambda}^{12},\\
      \Omega\Psi_{\lambda}^{12,34} & = -(2\lambda + 6)\Psi_{\lambda}^{12,34} +
      4\Psi_{\lambda}^{12,23} +
      2\Psi_\lambda^{12},\\
      \Omega\Upsilon_\lambda^{12} & = -(\lambda+1) \Upsilon_\lambda^{12},\\
      \Omega\Upsilon_{\lambda}^{12,23} & = -(2\lambda +3) \Upsilon_{\lambda}^{12,23},\\
      \Omega\Upsilon_{\lambda}^{12,34} & = -(2\lambda + 6)
      \Upsilon_{\lambda}^{12,34},
    \end{aligned}
  \end{equation}
  and
  \begin{equation}
    \label{eq:189b}
    \begin{aligned}
      \Omega\widehat\Psi_\lambda^{12}& =
      -(\lambda+2\vartheta+1)\widehat
      \Psi_\lambda^{12} + 1\cdot \Psi_\lambda^\emptyset,\\
      \Omega\widehat \Psi_{\lambda}^{12,23} & = -(2\lambda +
      3\vartheta + 3)\widehat\Psi_{\lambda}^{12,23} +
      4\widehat\Psi_\lambda^{12},\\
      \Omega\widehat\Psi_{\lambda}^{12,34} & = -(2\lambda + 4\vartheta
      + 6)\widehat\Psi_{\lambda}^{12,34} +
      4\widehat\Psi_{\lambda}^{12,23} +
      2\widehat\Psi_\lambda^{12},\\
      \Omega\widehat\Upsilon_\lambda^{12} & = -(\lambda + 2\vartheta + 1) \widehat\Upsilon_\lambda^{12},\\
      \Omega\widehat\Upsilon_{\lambda}^{12,23} & = -(2\lambda +
      3\vartheta + 3)
      \widehat \Upsilon_{\lambda}^{12,23},\\
      \Omega\widehat\Upsilon_{\lambda}^{12,34} & = -(2\lambda +
      4\vartheta +6) \widehat\Upsilon_{\lambda}^{12,34}.
    \end{aligned}
  \end{equation}

  \noindent
  2.\ The following functionals of $\mathcal X = (X_t)_{t \ge 0}$ are
  martingales:
  \begin{align}
    \label{eq:98}
    & \bigl(e^{(\lambda+1) t} \Upsilon_\lambda^{12}(X_t)\bigr)_{t\geq
      0}, \; \bigl(e^{(2\lambda+3) t}
    \Upsilon_{\lambda}^{12,23}(X_t)\bigr)_{t\geq 0},
    \;  \bigl(e^{(2\lambda+6) t} \Upsilon_{\lambda}^{12,34}(X_t)\bigr)_{t\geq 0},\\
    \intertext{and} \label{eq:98a} & \bigl(e^{(\lambda + 2\vartheta +
      1) t} \widehat\Upsilon_\lambda^{12}(X_t)\bigr)_{t\geq 0}, \;
    \bigl(e^{(2\lambda+3\vartheta+3) t}
    \widehat\Upsilon_{\lambda}^{12,23}(X_t)\bigr)_{t\geq 0}, \;
    \bigl(e^{(2\lambda+4\vartheta +6) t} \widehat
    \Upsilon_{\lambda}^{12,34}(X_t) \bigr)_{t\geq 0}.
  \end{align}
  Furthermore for all $s\geq 0$ and all $k\in\{12; 12,23;
  12,34\}$
  \begin{align}\label{eq:EPhi0}
    \mathbf E[\Upsilon_\lambda^{k}(X_s)] = \mathbf
    E[\widehat\Upsilon_{\lambda}^{k}(X_s)] = 0.
  \end{align}
\end{lemma}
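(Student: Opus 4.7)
The proof is essentially a direct calculation using the generator $\Omega = \Omega^{\mathrm{grow}} + \Omega^{\mathrm{res}} + \Omega^{\mathrm{mut}}$ (the selection term vanishes since we are in the neutral regime by \eqref{dg11}). My plan is to handle the six generator identities first, then read off the $\Upsilon$-identities as a consequence of appropriate linear algebra, and finally deduce the martingale and expectation assertions by a standard Dynkin argument.

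For each function in Definition~\ref{def:PsiPhi}, the test function $\phi$ is (up to a type-matching indicator) a product of exponentials $e^{-\lambda r_{ij}}$, so applying $\Omega^{\mathrm{grow}}$ just brings down the total exponential exponent; e.g., $\Omega^{\mathrm{grow}}\Psi_\lambda^{12,23}=-2\lambda\Psi_\lambda^{12,23}$. For $\Omega^{\mathrm{res}}$, I would enumerate the pairs $(k,\ell)$, $k\neq\ell$, in the relevant index set and read off the new distances from \eqref{pp11b}. For instance, for $\Psi_\lambda^{12,23}$ ($n=3$), four of the six pairs collapse exactly one of $r_{12},r_{23}$ to zero and reduce $\phi\circ\theta_{k,\ell}$ to a single exponential (giving $\Psi_\lambda^{12}$ after integration and using exchangeability), while the two remaining pairs $(1,3)$ and $(3,1)$ identify $\tilde r_{12}=\tilde r_{23}$ and yield $\Psi_{2\lambda}^{12}$. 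Summing and dividing by $2$ gives the stated identity. The analysis for $\Psi_\lambda^{12,34}$ proceeds along identical lines but with $n=4$ and more bookkeeping. For the hatted versions, the crucial input is the non-atomicity assumption \eqref{eq:beta-non-atomic}: for any $\widehat\Psi$-type function the integrand contains an indicator $\ind{a_k=a_j}$ with $j\neq k$, and $\beta_k\phi$ then contains the factor $\beta(a_k,\{a_j\})=0$, so $B_k\phi=-\phi$. Hence $\Omega^{\mathrm{mut}}$ simply contributes $-\vartheta\cdot(\text{number of relevant types})\cdot\widehat\Psi$, accounting for the extra $\vartheta$-terms in \eqref{eq:189b}.

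The $\Omega\Upsilon$-identities then reduce to linear algebra. For example, writing $\Upsilon_\lambda^{12,23}=\Psi_\lambda^{12,23}-a\Psi_{2\lambda}^{12}-b\Psi_\lambda^{12}-c$ and demanding $\Omega\Upsilon_\lambda^{12,23}=-(2\lambda+3)\Upsilon_\lambda^{12,23}$, substituting the already-computed $\Omega\Psi$-identities and matching coefficients of $\Psi_{2\lambda}^{12}$, $\Psi_\lambda^{12}$ and the constant produces three equations whose (unique) solution gives precisely the coefficients in \eqref{eq:212}. The same procedure yields $\Upsilon_\lambda^{12,34}$ and the hatted versions; this is precisely the content of the formulas in Definition~\ref{def:PsiPhi}.

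For the second assertion, the identity $\Omega\Upsilon=-c\Upsilon$ together with Dynkin's formula (applicable since $\Upsilon\in\Pi^1$ lies in the domain of the martingale problem) implies that $\Upsilon(X_t)+c\int_0^t\Upsilon(X_s)\,ds$ is a martingale. A one-line It\^o/integration-by-parts computation then shows $e^{ct}\Upsilon(X_t)$ is a local martingale, and boundedness of $\Upsilon$ on $\mathbb U_A$ upgrades this to a true martingale. The vanishing expectation \eqref{eq:EPhi0} follows either from stationarity ($X_s\stackrel{d}{=}X_\infty$ forces $\mathbf E[\Omega\Upsilon(X_s)]=0$, hence $\mathbf E[\Upsilon(X_s)]=0$ since $c\neq 0$), or directly by taking expectations in the martingale identity and using $e^{ct}\mathbf E[\Upsilon(X_t)]\equiv\mathbf E[\Upsilon(X_0)]$ together with the equilibrium values $\mathbf E[\Psi_\lambda^{12}(X_\infty)]=1/(\lambda+1)$, etc. The main obstacle is really just the combinatorial bookkeeping for $\Psi_\lambda^{12,34}$ and $\widehat\Psi_\lambda^{12,34}$, which involves $4\cdot 3=12$ resampling pairs and numerous exchangeability identifications; this is the natural place to delegate to \textsc{Mathematica} as the authors do elsewhere in the paper, since there is no further conceptual content beyond what has been described.
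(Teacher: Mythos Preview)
Your proposal is correct and follows essentially the same approach as the paper: the generator identities are obtained by direct computation (which the paper simply calls ``simple calculations''), the martingale property follows from the eigenfunction identity via Dynkin/Ethier--Kurtz Lemma~4.3.2, and the vanishing expectation comes from stationarity combined with $\Omega\Upsilon=-c\Upsilon$, $c\neq 0$. Your treatment is more explicit than the paper's---in particular your identification of the role of \eqref{eq:beta-non-atomic} in making $B_k\phi=-\phi$ for the hatted functions is exactly the mechanism the paper tacitly relies on---but there is no substantive difference in strategy.
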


\begin{remark}[Equilibrium expectations of $\Psi^k_\lambda,
  \widehat\Psi^k_\lambda$]\mbox{}\\
  Since \label{rem:reference}$\mathbf E[\Upsilon^k_\lambda(X_\infty)] = \mathbf
  E[\widehat\Upsilon^k_\lambda(X_\infty)] = 0$ by~\eqref{eq:EPhi0}, it is also
  possible to compute the equilibrium values for $\Psi^k_\lambda$ and
  $\widehat\Psi^k_\lambda$. We write for later reference,
  \begin{equation}\label{eq:sometimes}
    \begin{aligned}
      \Upsilon^{12,23}_{\lambda} & = \Psi^{12,23}_{\lambda} - \frac{5
        \lambda + 3}{(\lambda+1)(2\lambda+1)(2\lambda+3)} \\ & \qquad
      \qquad \qquad \qquad \qquad \qquad - \frac 12 \Big(
      \Psi^{12}_{2\lambda} - \frac{1}{2\lambda+1}\Big) -
      \frac{2}{\lambda+2}\Big(\Psi^{12}_\lambda -
      \frac{1}{\lambda+1}\Big),\\
      \Upsilon^{12,34}_{\lambda} & = \Psi^{12,34}_{\lambda} - \frac{4
        \lambda^2 + 18 \lambda + 9}{(\lambda+1) (\lambda+3)
        (2\lambda+1) (2\lambda+3)} \\ & \qquad - \frac 43\Big(
      \Psi^{12,23}_{\lambda} - \frac{5 \lambda +
        3}{(\lambda+1)(2\lambda+1)(2\lambda+3)}\Big) \\ & \qquad
      \qquad + \frac{4}{15} \Big( \Psi^{12}_{2\lambda} -
      \frac{1}{2\lambda+1}\Big) + \frac{2}{3(\lambda+5)}
      \Big(\Psi^{12}_\lambda - \frac{1}{\lambda+1}\Big)
    \end{aligned}
  \end{equation}
  and
  \begin{equation}\label{eq:sometimes2}
    \begin{aligned}
      \widehat\Upsilon^{12,23}_{\lambda} & =
      \widehat\Psi^{12,23}_{\lambda} - \frac{5\lambda + 6\vartheta +
        3}{(\lambda+2\vartheta +1) (2\lambda + 2\vartheta + 1)
        (2\lambda+3\vartheta + 3)} \\ & \qquad \qquad - \frac
      1{\vartheta +2} \Big( \widehat\Psi^{12}_{2\lambda} -
      \frac{1}{2\lambda+2\vartheta + 1}\Big) -
      \frac{2}{\lambda+\vartheta+2}\Big(\widehat\Psi^{12}_\lambda -
      \frac{1}{\lambda+2\vartheta+1}\Big),\\
      \widehat\Upsilon^{12,34}_{\lambda} & =
      \widehat\Psi^{12,34}_{\lambda} - \frac{4 \lambda^2 + 2
        \lambda(5\vartheta+9) + 21\vartheta + 6\vartheta^2 +
        9}{(\lambda+2\vartheta+1) (\lambda+2\vartheta+3)
        (2\lambda+2\vartheta+1) (2\lambda+3\vartheta+3)} \\ & \qquad -
      \frac 4{3+\vartheta}\Big(\widehat\Psi^{12,23}_{\lambda} -
      \frac{5 \lambda + 6\vartheta +
        3}{(\lambda+2\vartheta+1)(2\lambda+2\vartheta+1)(2\lambda+3\vartheta+3)}\Big)
      \\ & \qquad \qquad \qquad \qquad +
      \frac{4}{(\vartheta+3)(2\vartheta+5)} \Big(
      \widehat\Psi^{12}_{2\lambda} -
      \frac{1}{2\lambda+2\vartheta+1}\Big) \\ & \qquad \qquad \qquad
      \qquad \qquad \qquad +
      \frac{2(1-\vartheta)}{(\vartheta+3)(\lambda+2\vartheta+5)}
      \Big(\widehat\Psi^{12}_\lambda -
      \frac{1}{\lambda+2\vartheta+1}\Big)
    \end{aligned}
  \end{equation}
  where the terms after each $\Psi^k_\lambda$ and
  $\widehat\Psi^k_\lambda$ are the corresponding equilibrium values.
\end{remark}

\subsection{Proofs of Lemmata~\ref{l2}, \ref{l3} and \ref{l1}}
\label{ss.proofLs}

\begin{proof}[Proof of Lemma~\ref{l1}]
  The results in \eqref{eq:189} and~\eqref{eq:189b} follow by simple
  calculations. The stated martingale properties are consequences of general
  theory; see Lemma~4.3.2 in \cite{EthierKurtz86}. In particular, since the
  equilibrium distribution of $\mathcal X$ is ergodic, for $k\in\{12; 12,23;
  12,34\}$ and $\lambda_{12} = \lambda+1, \lambda_{12,23} = 2\lambda + 3,
  \lambda_{12,34} = 2\lambda + 6$,
  \begin{align}
    \mathbf E[\Upsilon^k_{\lambda}(X_s)] = \lim_{t\to \infty} \mathbf
    E[\Upsilon^k_{\lambda}(X_t)] = \lim_{t\to \infty} e^{-\lambda_k
      (t-s)}\mathbf E[\Upsilon^k_{\lambda}(X_s)] = 0\cdot \mathbf
    E[\Upsilon^k_{\lambda}(X_s)],
  \end{align}
  which can only hold (note that $|\mathbf E[\Upsilon^k_{
    \lambda}(X_s)]|<\infty$ by definition) if $\mathbf
  E[\Upsilon^k_{\lambda}(X_s)] = 0$. Replacing $\lambda_k$ by
  $\widehat\lambda_k$, $\widehat\lambda_{12} = \lambda + 2\vartheta +
  1, \widehat\lambda_{12,23} = 2\lambda + 3\vartheta + 3,
  \widehat\lambda_{12,34} = 2\lambda + 4\vartheta + 6$ gives the
  corresponding results for the expectations of
  $\widehat\Upsilon^k_\lambda$.
\end{proof}

\begin{proof}[Proof of Lemma~\ref{l2}]
  We only give the full proof for the first assertion in~\eqref{eq:l2}
  since the second follows exactly along the same lines. The proof
  amounts to computing the variance of $\Psi_\lambda^{12}$. Therefore,
  we need to understand the expectation of $(\Psi_\lambda^{12})^2 =
  \Psi_\lambda^{12,34}$ (for the last equality,
  see~\eqref{eq:31l}). To this end we express the $\Psi_\lambda$'s in
  term of the $\Upsilon_\lambda$'s and obtain
  \begin{equation}
    \label{eq:16}
    \begin{aligned}
      \Psi_\lambda^{12} &= \Upsilon_\lambda^{12} + \frac1{\lambda+1},  \\
      \Psi_\lambda^{12,23} &=\Upsilon_\lambda^{12,23} + \frac{1}2
      \Upsilon_\lambda^{12,12} + \frac{2}{2+\lambda}\Upsilon_\lambda^{12} +
      \frac{5\lambda+3}{(\lambda+1)(2\lambda+1)(2\lambda+3)}, \\
      \Psi_\lambda^{12,34} & = \Upsilon_\lambda^{12,34} + \frac 43
      \Upsilon_\lambda^{12,23} + \frac{2}{5} \Upsilon_\lambda^{12,12} +
      \frac{2(\lambda+6)}{(\lambda+2)(\lambda+5)}\Upsilon_\lambda^{12} \\
      & \qquad \qquad \qquad \qquad \qquad \qquad \qquad +
      \frac{4\lambda^2+18\lambda+9}{(\lambda+1)(\lambda+3)(2\lambda+1)(2\lambda+3)}.
    \end{aligned}
  \end{equation}
  Together with~\eqref{eq:EPhi0}, this implies
  \begin{equation}
    \label{eq:43}
    \begin{aligned}
      \mathbf E[((\lambda+1)\Psi^{12}(X_t) -1)^2] & = (\lambda+1)^2 \cdot
      \mathbf E[ \Psi_\lambda^{12,34}(X_t)] -
      2(\lambda+1)\cdot \mathbf E[ \Psi_\lambda^{12}(X_t)] + 1 \\
      & = \frac{2 \lambda^2} {(\lambda+3) (2 \lambda+1) (2 \lambda +3)}
      \xrightarrow{\lambda\to\infty}0.
    \end{aligned}
  \end{equation}
\end{proof}

\begin{proof}[Proof of Lemma~\ref{l3}]
  Again, we restrict our proof to~\eqref{eq:31a} since~\eqref{eq:31b}
  is proved analogously. We compute
  \begin{equation}
    \begin{aligned}
      \mathbf E[\Psi^{12}_\lambda({X_0})\Upsilon^{12}_\lambda({X_0})]
      & = \mathbf E\Big[\Psi^{12,34}_\lambda(X_0) -
      \frac{1}{\lambda+1}\Psi^{12}_\lambda(X_0)\Big] \\ & = \frac{2
        \lambda^2}{(\lambda+1)^2(\lambda + 3) (2\lambda + 1) (2\lambda
        + 3)}
    \end{aligned}
  \end{equation}
  and (recall that we start in equilibrium)
  \begin{equation}
    \begin{aligned}
      \mathbf E[(\Psi^{12}_\lambda(X_t) & - \Psi^{12}_\lambda(X_0))^2]
      \\& = \mathbf E[(\Psi^{12}_\lambda(X_t))^2 -
      (\Psi^{12}_\lambda(X_0))^2 - 2\Psi^{12}_\lambda(X_0)
      (\Psi^{12}_\lambda(X_t)-\Psi^{12}_\lambda(X_0))] \\ & = \mathbf
      E\Big[2\Psi^{12}_\lambda(X_0) \int_0^t \Omega
      \Upsilon^{12}_\lambda(X_s) ds\Big] \\ & = \int_0^t \mathbf
      E[2\Psi^{12}_\lambda(X_0)(\lambda+1) \mathbf
      E[\Upsilon^{12}_\lambda(X_s)|\mathcal F_0]] ds \\ & =
      2\int_0^t(\lambda+1)e^{-(\lambda+1)s}ds \cdot \mathbf
      E[\Psi^{12}_\lambda({X_0})\Upsilon^{12}_\lambda({X_0})] \\
      & \le \frac{4}{(\lambda+1)^2} t
    \end{aligned}
  \end{equation}
  and the result follows.
\end{proof}

\subsection{Moments up to fourth order}
\label{ss.mom4}

For the proofs of Lemma~\ref{l4b} and Lemma~\ref{l5} we use moment
calculations of the increments of $(\Psi_\lambda^{12}(X_t))_{t\geq 0}$
and $(\widehat\Psi_\lambda^{12}(X_t))_{t\geq 0}$ up to fourth
order. The calculations are presented in an algorithmic way such that
higher moments can be computed along the same lines. The fundamental
idea is that all computations that need to be done are \emph{linear
  maps} on the vector spaces:
\begin{equation}
  \label{eq:V}
  \begin{aligned}
    \mathbb V_\lambda & \coloneqq
    \text{span}\big(\{\Psi_\lambda^{\mathcal I}: \overline{(U,r,\mu)}
    \mapsto \langle \mu^{\otimes \N},
    \textstyle\prod\limits_{(i,j)\in\mathcal I} e^{-\lambda
      r(u_i,u_j)}\rangle: \mathcal
    I \text{ finite subset of }\mathbb N^2\}\big),\\
    \widehat{\mathbb V}_\lambda & \coloneqq \text{span}\big(\{\widehat
    \Psi_\lambda^{\mathcal I}: \overline{(U,r,\mu)} \mapsto \langle
    \mu^{\otimes \N}, \textstyle\prod\limits_{(i,j)\in\mathcal
      I}\ind{a_i=a_j}e^{-\lambda r(u_i,u_j)}\rangle: \mathcal I \text{
      finite subset of }\mathbb N^2\}\big).
  \end{aligned}
\end{equation}
We point out that one advantage of using matrix algebra is that it is possible
to use computer algebra software such as \textsc{Mathematica} for automated
computations.

Apparently, a \emph{basis} of the vector spaces $\mathbb V_\lambda$
and $\widehat{\mathbb V}_\lambda$ are given by (recall from
Definition~\ref{def:PsiPhi})
\begin{align}
  &\underline \Psi_\lambda \coloneqq (\Psi_\lambda^\emptyset,
  \Psi_\lambda^{12}, \Psi_\lambda^{12,12},
  \Psi_\lambda^{12,23}, \Psi_\lambda^{12,34},\dots),\\
  &\underline{\widehat\Psi}_\lambda \coloneqq
  (\widehat\Psi_\lambda^\emptyset, \widehat\Psi_\lambda^{12},
  \widehat\Psi_\lambda^{12,12}, \widehat\Psi_\lambda^{12,23},
  \widehat\Psi_\lambda^{12,34},\dots),
\end{align}
respectively. By inspection of e.g.\ Lemma~\ref{l1} it is clear, that
the generator $\Omega$ gives rise to linear operators $\mathbb
V_\lambda \to \mathbb V_\lambda$ and $\widehat{\mathbb V}_\lambda \to
\widehat{\mathbb V}_\lambda$.  The bases $\underline \Psi_\lambda$ and
$\underline{\widehat\Psi}_\lambda$ can be ordered such that the
matrices representing the linear maps $\Omega$ on $\mathbb V_\lambda$
and $\widehat{\mathbb V}_\lambda$ are \emph{upper triangular}; see
also Lemma~\ref{l1}.

In the next section, we give the action of $\Omega$ on the first~36 basis
vectors for each basis. The last basis vectors in these collections are given by
\begin{equation}\label{ag11}
  \Psi_\lambda^{12,34,56,78}: \overline{(U,r,\mu)} \mapsto \langle
  \mu^{\otimes \N},e^{-\lambda r_{12,34,56,78}} \rangle,
\end{equation}
where we write $r_{12,34,56,78}\coloneqq r(u_1,u_2) + r(u_3,u_4) +
r(u_5,u_6) + r(u_7, u_8) $, and
\begin{equation}\label{ag12}
  \widehat\Psi_\lambda^{12,34,56,78}: \overline{(U,r,\mu)} \mapsto
  \langle \mu^{\otimes \N}, \ind{a_1=a_2, a_3=a_4,a_5=a_6,a_7=a_8}
  e^{-\lambda r_{12,34,56,78} }\rangle,
\end{equation}
respectively. Because of $(\Psi_\lambda^{12})^4 =
\Psi_\lambda^{12,34,56,78}$ and $(\widehat\Psi_\lambda^{12})^4 =
\widehat\Psi_\lambda^{12,34,56,78}$ both 36's basis elements play
important roles.

\subsubsection*{The matrix representation of the generator $\mathbf \Omega$}
Here, we give the matrix representation of the generator $\Omega$ in
terms of the basis $\underline{\Psi}_\lambda$, i.e.\ we find matrices
$A$ and $\widehat A$ such that
\begin{align}\label{ag13}
  \Omega\underline{\Psi}_\lambda^\top =
  A\underline{\Psi}_\lambda^\top, \qquad
  \Omega\underline{\widehat\Psi}_\lambda^\top = \widehat
  A\underline{\widehat\Psi}_\lambda^\top.
\end{align}
The following list contains the action on the first 36 basis vectors
from $\underline\Psi_\lambda$ and
$\underline{\widehat\Psi}_\lambda$.

In 4.\ as an example (see below) we are dealing with the function
\begin{equation}\label{ag15}
  \Psi^{12,34}_\lambda (\mbox{ and } \widehat\Psi^{12,34}_\lambda), \mbox{ for
    which we use \vier as abbreviation.}
\end{equation}
As this symbol indicates, in $\Psi^{12,34}_\lambda$, two
non-overlapping pairs are sampled.

We have already seen in \eqref{eq:189} that
\begin{align}
  \Omega\Psi_\lambda^{12,34} & = -(2\lambda+6)\Psi_\lambda^{12,34} +
  2\Psi_\lambda^{12} + 4\Psi_\lambda^{12,23},\\
  \Omega\widehat\Psi_\lambda^{12,34} & = -(2\lambda+4\vartheta +
  6)\widehat\Psi_\lambda^{12,34} + 2\widehat\Psi_\lambda^{12} + 4
  \widehat \Psi_\lambda^{12,23}.
\end{align}
Our list only contains the last two terms (which are equal in both
equations) which arise from \vier by merging of a pair of leaves,
i.e.\ merging for example the marked leaves in
\parbox[b]{1cm}{
  \beginpicture
  \setcoordinatesystem units <.5cm,.5cm>
  \setplotarea x from 0 to 3, y from 0 to 0.5
  \circulararc -180 degrees from 0 0 center at 0.5 0
  \circulararc -180 degrees from 1.5 0 center at 2 0
  \put{$\bullet$} [cC] at 1 0
  \put{$\bullet$} [cC] at 1.5 0
  \endpicture} to \drei or
merging the marked leaves in
\parbox[b]{1cm}{
  \beginpicture
  \setcoordinatesystem units <.5cm,.5cm>
  \setplotarea x from 0 to 3, y from 0 to 0.5
  \circulararc -180 degrees from 0 0 center at 0.5 0
  \circulararc -180 degrees from 1.5 0 center at 2 0
  \put{$\bullet$} [cC] at 1 0
  \put{$\bullet$} [cC] at 0 0
  \endpicture} to \eins\!\!\!. The coefficient of
$\Psi_\lambda^{12,34}$ in $\Omega\Psi_\lambda^{12,34}$ (and of
$\widehat\Psi_\lambda^{12,34}$ in
$\Omega\widehat\Psi_\lambda^{12,34}$) is given such that the sum of
coefficients in $\Omega\Psi_\lambda^{12,34}$ equals $\lambda$ times
the number of pairs (i.e.\ 2) in $\Psi_\lambda^{12,34}$ (and
$\vartheta$ times the number of different indices (i.e.\ 4)). Since
the table would be the same for the $\widehat\Psi_\lambda^k$'s, we
restrict ourselves to the $\Psi_\lambda^k$'s.
\begin{enumerate}\setcounter{enumi}{-1}
\item $\Psi_\lambda^\emptyset\equiv$\nul$\to$ ---
\item $\Psi_\lambda^{12}\equiv$ \eins$\to$ 1x\nul
\item $\Psi_\lambda^{12,12}\equiv$ \zwei$\to$ 1x\nul
\item $\Psi_\lambda^{12,23}\equiv$ \drei$\to$ 2x\eins 1x\zwei
\item $\Psi_\lambda^{12,34}\equiv$ \vier$\to$ 2x\eins 4x\drei
\item $\Psi_\lambda^{12, 12, 12}\equiv$ \fuenf$\to$ 1x\nul
\item $\Psi_\lambda^{12,12,23}\equiv$ \sechs$\to$ 1x\eins 1x\zwei 1x\fuenf
\item $\Psi_\lambda^{12,13,23}\equiv$ \sieben$\to$ 3x\zwei
\item $\Psi_\lambda^{12,12,34}\equiv$\acht$\to$ 1x\eins 1x\zwei 4x\sechs
\item $\Psi_\lambda^{12,23,24}\equiv$\neun$\to$ 3x\drei 3x\sechs
\item $\Psi_\lambda^{12,23,34}\equiv$\einsnull$\to$ 3x\drei 2x\sechs 1x\sieben
\item $\Psi_\lambda^{12,23,45}\equiv$\einseins$\to$ 1x\drei 2x\vier
  1x\acht 2x\neun \\[1ex] \hspace*{2.5cm} 4x\einsnull
\item $\Psi_\lambda^{12,34,56}\equiv$\einszwei$\to$ 3x\vier 12x\einseins
\item $\Psi_\lambda^{12,12,12,12}\equiv$\einsdrei$\to$ 1x\nul
\item $\Psi_\lambda^{12,12,12,23}\equiv$\einsvier$\to$ 1x\eins 1x\fuenf 1x\einsdrei
\item $\Psi_\lambda^{12,12,23,23}\equiv$\einsfuenf$\to$ 2x\zwei 1x\einsdrei
\item $\Psi_\lambda^{12,12,13,23}\equiv$\einssechs$\to$ 1x\zwei 2x\fuenf
\item $\Psi_\lambda^{12,12,23,34}\equiv$\einssieben$\to$ 1x\drei 2x\sechs
  1x\einsvier 1x\einsfuenf 1x\einssechs
\item $\Psi_\lambda^{12,23,23,34}\equiv$\einsacht$\to$ 1x\drei 2x\sechs
  2x\einsvier 1x\einssechs
\item $\Psi_\lambda^{12,13,23,34}\equiv$\einsneun$\to$ 3x\sechs 1x\sieben
  2x\einssechs
\item $\Psi_\lambda^{12,23,34,14}\equiv$\zweinull$\to$ 4x\sieben
  2x\einssechs
\item $\Psi_\lambda^{12,12,23,24}\equiv$\zweieins$\to$ 1x\drei 2x\sechs
  2x\einsvier 1x\einsfuenf
\item $\Psi_\lambda^{12,12,34,34}\equiv$\zweizwei$\to$ 2x\zwei 4x\einsfuenf
\item $\Psi_\lambda^{12,12,12,34}\equiv$\zweidrei$\to$ 1x\eins 1x\fuenf
  4x\einsvier
\item $\Psi_\lambda^{12,23,34,45}\equiv$\zweivier$\to$ 4x\einsnull
  2x\einssieben 1x\einsacht  \\ \hspace*{2.5cm} 2x\einsneun
  1x\zweinull
\item $\Psi_\lambda^{12,23,34,25}\equiv$\zweifuenf$\to$ 2x\neun
  2x\einsnull 1x\einssieben \\ \hspace*{2.5cm} 2x\einsacht 2x\einsneun
  1x\zweieins
\item $\Psi_\lambda^{12,23,24,25}\equiv$\zweisechs$\to$ 4x\neun 6x\zweieins
\item $\Psi_\lambda^{12,12,34,45}\equiv$\zweisieben$\to$ 1x\drei 2x\acht
  4x\einssieben 2x\zweieins \\ \hspace*{2.5cm} 1x\zweizwei
\item $\Psi_\lambda^{12,12,23,45}\equiv$\zweiacht$\to$ 1x\vier 1x\sechs
  1x\acht 2x\einssieben \\ \hspace*{2.5cm} 2x\einsacht 2x\zweieins
  1x\zweidrei
\item $\Psi_\lambda^{12,23,13,45}\equiv$\zweineun$\to$ 1x\sieben 3x\acht
  6x\einsneun
\item $\Psi_\lambda^{12,23,45,56}\equiv$\dreinull$\to$ 4x\einseins
  4x\zweivier 4x\zweifuenf \\ \hspace*{2.5cm} 1x\zweisechs
  2x\zweisieben
\item $\Psi_\lambda^{12,12,34,56}\equiv$\dreieins$\to$ 1x\vier 2x\acht
  4x\zweisieben \\ \hspace*{2.5cm} 8x\zweiacht
\item $\Psi_\lambda^{12,23,24,56}\equiv$\dreizwei$\to$ 1x\neun 3x\einseins
  6x\zweifuenf \\ \hspace*{2.5cm} 2x\zweisechs 3x\zweiacht
\item $\Psi_\lambda^{12,23,34,56}\equiv$\dreidrei$\to$ 1x\einsnull
  3x\einseins 4x\zweivier \\ \hspace*{2.5cm} 4x\zweifuenf
  2x\zweiacht 1x\zweineun
\item $\Psi_\lambda^{12,23,45,67}\equiv$\dreivier$\to$ 2x\einseins
  2x\einszwei \\ \hspace*{2.5cm} 4x\dreinull 1x\dreieins 4x\dreizwei
  \\ \hspace*{2.5cm} 8x\dreidrei
\item $\Psi_\lambda^{12,34,56,78}\equiv$\dreifuenf$\to$ 4x\einszwei
  24x\dreivier
\end{enumerate}
For the matrices $A$ (and $\widehat A$) representing $\Omega$ in terms
of the basis $\underline \Psi_\lambda$ (and
$\underline{\widehat\Psi}_\lambda$), we give here only the first 13
(5) rows and columns for $A$ ($\widehat A$), which are dealing with samples of
at most three (two) pairs (i.e.\ $|\mathcal I|\leq 3$ ($\leq 2$)
in~\eqref{eq:V}). For $A$, we get
{\tiny
  \begin{align*}
    -A \coloneqq \left(
      \begin{array}{ccccccccccccc}
        0&  0&  0&  0&  0&  0&  0&  0&  0&  0&  0&  0&  0\\
        -1&  \lambda + 1&  0&  0&  0&  0&  0&  0&  0&  0&  0&  0&  0\\
        -1&  0& 2\lambda + 1&  0&  0&  0&  0&  0&  0&  0&  0&  0&  0\\
        0&  2&  -1&  2 \lambda + 3&  0&  0&  0&  0&  0&  0&  0&  0&  0\\
        0&  2&  0&  -4& 2 \lambda + 6&  0&  0&  0&  0&  0&  0&  0&  0\\
        -1&  0&  0&  0&  0&  3 \lambda + 1&  0&  0&  0&  0&  0&  0&  0\\
        0&  -1&  -1&  0&  0&  -1&  3 \lambda + 3&  0&  0&  0&  0&  0&  0\\
        0&  0&  -3&  0&  0&  0&  0&  3 \lambda + 3&  0&  0&  0&  0&  0\\
        0&  -1&  -1&  0&  0&  0&  -4&  0&  3 \lambda + 6&  0&  0&  0&  0\\
        0&  0&  0&  -3&  0&  0&  -3&  0&  0&  3 \lambda + 6&  0&  0&  0\\
        0&  0&  0&  -3&  0&  0&  -2&  -1&  0&  0&  3 \lambda + 6&  0&  0\\
        0&  0&  0&  -1&  -2&  0&  0&  0&  -1&  -2&  -4&  3 \lambda + 10&  0\\
        0&  0&  0&  0&  -3&  0&  0&  0&  0&  0&  0&  -12& 3 \lambda + 15\\
      \end{array}\right)
  \end{align*}
}
while for $\widehat A$ we have
\begin{align*}
  -\widehat A \coloneqq \left(
    \begin{array}{ccccccccccccc}
      0&  0&  0&  0&  0 \\
      -1&  \lambda + 2\vartheta + 1&  0&  0&  0 \\
      -1&  0& 2\lambda + 2\vartheta + 1&  0&  0 \\
      0&  2&  -1&  2 \lambda + 3\vartheta + 3& 0\\
      0&  2&  0&  -4& 2 \lambda + 4\vartheta + 6
    \end{array}\right)
\end{align*}

\noindent
Note that both matrices are diagonalisable. The reason is that the
submatrix containing the same eigenvalue (take $3\lambda+6$ in $A$
above as an example) is diagonal. Hence, there are three independent
eigenvectors for this eigenvalue and so, we find a basis of
eigenvectors.

In the rest of this section, the calculations for the
$\Psi^k_\lambda$'s and $\widehat\Psi^k_\lambda$'s are completely
analogous using the $\widehat\Upsilon_\lambda^k$'s instead of the
$\Upsilon_\lambda^k$'s. Hence, we only present the calculations
concerning $\Psi^k_\lambda$'s.

\subsubsection*{Conditioning as a linear map}
During our calculations, we need to be able to compute terms like
$\mathbf E\Big[\sum_{k=1}^n a_k \Psi_\lambda^k(X_t)|\mathcal F_s\Big]$
for $s\leq t$. We do this using the following procedure. We have
given the following objects:
\begin{enumerate}
\item A basis
  \begin{align}\label{eq:Phibasis}
    \underline \Upsilon_\lambda \coloneqq (\Upsilon_\lambda^\emptyset,
    \Upsilon_\lambda^{12}, \Upsilon_\lambda^{12,12},\dots)
  \end{align}
  such that $\Omega\Upsilon_\lambda^k = \lambda_k \Upsilon_\lambda^k$,
  i.e.\ $\Upsilon_\lambda^k$ is an eigenvector of the generator
  $\Omega$ for the eigenvalue $\lambda_k$ (where $k\in\{\emptyset; 12;
  12,12; 12,23; 12,34;\dots\}$. As argued below, these eigenvectors
  exist, since $A$ is diagonalisable.
\item A diagonal matrix $D_{t-s}$ with diagonal entries
  $e^{-\lambda_k(t-s)}$ in the $k$th line. Since the $\lambda_k$'s are
  given by 1., this matrix is readily obtained.
\item An invertible matrix
  $M_{\underline\Psi_\lambda}^{\underline\Upsilon_\lambda} =
  (M_{\underline\Upsilon_\lambda}^{\underline\Psi_\lambda} )^{-1}$,
  such that $\underline \Psi_\lambda
  M_{\underline\Psi_\lambda}^{\underline\Upsilon_\lambda}=
  \underline\Upsilon_\lambda$ and $\underline \Upsilon_\lambda
  M_{\underline\Upsilon_\lambda}^{\underline\Psi_\lambda} =
  \underline\Psi_\lambda$. These two matrices accomplish a change of
  basis from $\underline{\Psi}_\lambda$ to
  $\underline{\Upsilon}_\lambda$ and back.

  Since $\underline\Upsilon_\lambda$ is the basis of eigenvectors of the
  matrix $A$, the matrices
  $M_{\underline\Psi_\lambda}^{\underline\Upsilon_\lambda}$ and
  $M_{\underline\Upsilon_\lambda}^{\underline\Psi_\lambda}$ are obtained
  by standard linear algebra. We stress that both matrices are lower
  triangular since $A$ is lower triangular.
\end{enumerate}
Then, because $(e^{\lambda_k t}\Upsilon_\lambda^k(t))_{t\geq 0}$ are
martingales, (compare with Lemma~\ref{l1}),
\begin{equation}
  \label{eq:918}
  \begin{aligned}
    \mathbf E\big[\underline\Psi_\lambda^\top(X_t)|\mathcal F_s\big] &
    = M_{\underline\Upsilon_\lambda}^{\underline\Psi_\lambda}\cdot \mathbf
    E[\underline \Upsilon_\lambda(X_t)|\mathcal F_s]^\top =
    M_{\underline\Upsilon_\lambda}^{\underline\Psi_\lambda}\cdot D_{t-s}
    \cdot \underline \Upsilon_\lambda(X_s)^\top \\ & =
    M_{\underline\Upsilon_\lambda}^{\underline\Psi_\lambda}\cdot D_{t-s}
    \cdot M_{\underline\Psi_\lambda}^{\underline\Upsilon_\lambda}\cdot
    \underline \Psi_\lambda(X_s)^\top.
  \end{aligned}
\end{equation}
This means that conditioning linear combinations of elements in
$\underline{\Psi}_\lambda(X_t)$ on $\mathcal F_s$ can be represented
by a composition of linear maps.

\subsubsection*{Multiplication with \boldmath $\Psi_\lambda^{12}$ as a linear
  map}
We need to compute the action of multiplying an element of $\mathbb
V_\lambda$ with $\Psi_\lambda^{12}$. Since $\mathbb V_\lambda$ is
actually an algebra of functions (see Remark~\ref{rem:intPhi} and
Remark~2.8 in \cite{GPWmp}), this can be done. We have that
\begin{align}
  \Psi_\lambda^{12} \underline\Psi_\lambda^\top =
  \Big(\Psi_\lambda^{12}, \Psi_\lambda^{12,34},
  \Psi_\lambda^{12,12,34}, \Psi^{12,23,45},\dots).
\end{align}
This means that there is a linear matrix $Q$ such that
\begin{align}
  \label{eq:Q}
  \Psi_\lambda^{12}\underline\Psi_\lambda^\top = Q\underline
  \Psi_\lambda^\top.
\end{align}
Apparently, $Q$ is the matrix for a linear map on $\mathbb V_\lambda$,
with respect to the basis $\underline \Psi_\lambda$.

\subsection{Proof of Lemma~\ref{l4b}}
\label{ss.proofsl4l5}

Again, we only give the calculations for the $\Psi_\lambda^k$'s in detail. We
use
\begin{equation}\label{eq:l4b1}
  \begin{aligned}
    ((\lambda+1)\Psi^{12}_\lambda-1)^4 & = (\lambda+1)^4
    \Psi^{12,34,56,78}_\lambda - 4(\lambda+1)^3\Psi^{12,34,56} \\ &
    \qquad\qquad\qquad + 6(\lambda+1)^2\Psi^{12,34}_\lambda + 4
    (\lambda+1)\Psi^{12}_\lambda + 1.
  \end{aligned}
\end{equation}
By construction, we obtain for all $k\in\mathcal I$
\begin{align}
  \mathbf E[\Psi^{k}_\lambda(X_\infty)] = \mathbf
  E[(\underline\Upsilon_\lambda(X_\infty) M_{{\underline
      \Upsilon_{\lambda}}}^{{\underline \Psi_{\lambda}}})_{k}] =
  (M_{{\underline \Upsilon_{\lambda}}}^{{\underline
      \Psi_{\lambda}}})_{\emptyset; k}
\end{align}
since $\mathbf E[\Upsilon^k_\lambda(X_\infty)] = 0$ for
$k\neq\emptyset$ as in~\eqref{eq:EPhi0}. Hence,
\begin{equation}
  \label{eq:961}
  \begin{aligned}
    (\lambda+1)\Psi^{12}_\lambda(X_\infty) & = 1,\\
    (\lambda+1)^2\Psi^{12,34}_\lambda(X_\infty) & = \frac{4
      \lambda^4+ 26 \lambda^3 + 49 \lambda^2 + 36 \lambda + 9 }{4
      \lambda^4+ 24 \lambda^3 + 47 \lambda^2 + 36 \lambda + 9} = 1 +
    \frac{1}{2\lambda}
    - \frac{5}{2\lambda^2} + \mathcal O\Big(\frac{1}{\lambda^3}\Big),\\
    (\lambda+1)^3\Psi^{12,34,56}_\lambda(X_\infty) & = \frac{36
      \lambda^7+ 618 \lambda^6 + 4143 \lambda^5 + \mathcal
      O(\lambda^4)}{36 \lambda^7+ 564 \lambda^6 + 3487 \lambda^5 +
      \mathcal O(\lambda^4)} \\ & = 1 + \frac{3}{2\lambda} -
    \frac{95}{18\lambda^2} +
    \mathcal O\Big(\frac{1}{\lambda^3}\Big),\\
    (\lambda+1)^4\Psi^{12,34,56,78}_\lambda(X_\infty) & =
    \frac{36864 \lambda^{16}+ 1536000 \lambda^{15} + 28807680
      \lambda^{14} + \mathcal O(\lambda^{13}) }{ 36864 \lambda^{16}+
      1425408 \lambda^{15} + 24729088 \lambda^{14} + \mathcal
      O(\lambda^{13}) } \\ & = 1 + \frac{3}{\lambda} -
    \frac{193}{36\lambda^2} + \mathcal
    O\Big(\frac{1}{\lambda^3}\Big).
  \end{aligned}
\end{equation}
Plugging the last expressions in~\eqref{eq:l4b1} gives the result.

\subsection{Proof of Lemma~\ref{l5}}
\label{ss:56}
Now we can compute the fourth moments of the increments of $t \mapsto
(\lambda+1)\Psi^{12}(X_t)$.  Again, we only give the proof of the
first assertion. The second follows by replacing the
$\Psi_\lambda^k$'s by the $\widehat\Psi_\lambda^k$'s. Using
Minkowski's inequality and the fact that we start in equilibrium we
have
\begin{equation}
  \label{eq:5365}
  \begin{aligned}
    \bigl(\mathbf E[(\Psi^{12}_\lambda(X_t) -
    \Psi^{12}_\lambda(X_0))^4] \bigr)^{1/4} & \le \bigl( \mathbf
    E[(\Psi^{12}_\lambda(X_t))^4] \bigr)^{1/4}
    +   \bigl( \mathbf E[(\Psi^{12}_\lambda(X_0))^4]      \bigr)^{1/4} \\
    & = 2 \bigl( \mathbf E[\Psi^{12,34,56,78}_\lambda(X_0)]
    \bigr)^{1/4}
  \end{aligned}
\end{equation}
and therefore, since $\mathbf E[\Psi^{12,34,56,78}_\lambda(X_0)] =
\mathcal O (\lambda^{-4})$ as $\lambda \to \infty$ by~\eqref{eq:961},
we have
\begin{align}
  \label{eq:5366}
  (\lambda +1)^4 \mathbf E[(\Psi^{12}_\lambda(X_t) -
  \Psi^{12}_\lambda(X_0))^4] & \le 2^4 (\lambda+1)^4 \mathbf
  E[\Psi^{12,34,56,78}_\lambda(X_0)] = \mathcal O(1).
\end{align}
Thus, the expression on the left hand side of \eqref{eq:310} is
\emph{bounded}.

Now, using
\begin{align}
  (a-b)^4 = a^4-b^4 - 4b(a^3-b^3) + 6b^2(a^2-b^2)-4b^3(a-b)
\end{align}
we obtain,
\begin{equation}
  \label{eq:976}
  \begin{aligned}
    \mathbf E[& (\Psi^{12}_{\lambda}(X_t) - \Psi^{12}_{\lambda}(X_0))^4] \\ &
    = \mathbf E[\Psi^{12,34,56,78}_\lambda(X_t) -
    \Psi^{12,34,56,78}_\lambda(X_0)] \\ & \qquad \qquad - 4\mathbf
    E\big[\Psi^{12}_\lambda(X_0)\big(\Psi^{12,34,56}_\lambda(X_t) -
    \Psi^{12,34,56}_\lambda(X_0)\big)\big] \\ & \qquad \qquad \qquad \qquad +
    6 \mathbf E\big[\Psi^{12,34}_\lambda(X_0)\big(\Psi^{12,34}_\lambda(X_t) -
    \Psi^{12,34}_\lambda(X_0)\big)\big] \\ & \qquad \qquad \qquad \qquad
    \qquad \qquad - 4 \mathbf
    E[\Psi^{12,34,56}_\lambda(X_0)\big(\Psi^{12}_\lambda(X_t) -
    \Psi^{12}_\lambda(X_0)\big)\big] \\ & = -4 \int_0^t \mathbf
    E[\Psi^{12}_\lambda(X_0) \Omega \Psi^{12,34,56}_\lambda(X_s)]ds +
    6\int_0^t \mathbf
    E[\Psi^{12,34}_\lambda(X_0)\Omega\Psi^{12,34}_\lambda(X_s)]ds \\
    & \qquad \qquad \qquad \qquad \qquad \qquad \qquad \qquad \quad - 4
    \int_0^t \mathbf
    E[\Psi^{12,34,56}_\lambda(X_0)\Omega\Psi^{12}_\lambda(X_s)] ds,
  \end{aligned}
\end{equation}
because $\Psi^{12,34,56,78}_{\lambda}(X_t) \stackrel d =
\Psi^{12,34,56,78}_{\lambda}(X_0)$, since we start in
equilibrium.

Let us consider the expectation in the second term on the right hand side. We
have
\begin{equation}
  \label{eq:977}
  \begin{aligned}
    \mathbf E[\Psi^{12,34}_\lambda(X_0)\Omega\Psi^{12,34}_\lambda(X_s)] & =
    \mathbf E[\Psi^{12,34}_\lambda(X_0)\mathbf
    E[\Omega\Psi^{12,34}_\lambda(X_s)|\mathcal F_0]] \\ & = \mathbf
    E[\Psi^{12,34}_\lambda(X_0)\big(A \cdot \mathbf
    E[\underline{\Psi}_\lambda(X_s)|\mathcal F_0]\big)_{12,34}] \\ & = \mathbf
    E[\Psi^{12,34}_\lambda(X_0)\big(A \cdot M_{\underline
      \Upsilon_\lambda}^{\underline\Psi_\lambda} \mathbf
    E[\underline{\Upsilon}_\lambda(X_s)|\mathcal F_0]\big)_{12,34}] \\ & = \mathbf
    E[\Psi^{12,34}_\lambda(X_0)\big(A \cdot M_{\underline
      \Upsilon_\lambda}^{\underline\Psi_\lambda} D_s M_{\underline
      \Psi_\lambda}^{\underline\Upsilon_\lambda}
    \underline{\Psi}_\lambda(X_0)\big)_{12,34}] \\ & = \mathbf
    E[\Psi^{12}_\lambda(X_0)\big(A \cdot M_{\underline
      \Upsilon_\lambda}^{\underline\Psi_\lambda} D_s M_{\underline
      \Psi_\lambda}^{\underline\Upsilon_\lambda}
    Q\underline{\Psi}_\lambda(X_0)\big)_{12,34}] \\ & = \mathbf E[\big(A \cdot
    M_{\underline \Upsilon_\lambda}^{\underline\Psi_\lambda} D_s M_{\underline
      \Psi_\lambda}^{\underline\Upsilon_\lambda} Q Q
    \underline{\Psi}_\lambda(X_0)\big)_{12,34}].
  \end{aligned}
\end{equation}
The expression in the last line can easily be integrated in $s$ over $[0,t]$,
because only $D_s$ depends on $s$. In addition, $X_0$ is in equilibrium and
hence, the expectation can be evaluated using the equilibrium distribution of
$\mathcal X$. All three terms in the right hand side of \eqref{eq:976} can be
computed analogously. Using \textsc{Mathematica}, we see that \eqref{eq:310}
holds.

\section{Proof of Lemma~\ref{L.reform},
  Propositions~\ref{P:smallballs} and~\ref{P.fluc} and
  Theorems~\ref{T3} and~\ref{T4}}
\label{S:proofT3}

\subsection{Proof of Proposition~\ref{P:smallballs}}
By Lemma~\ref{L.reform} we have to show that~\eqref{eq:convPsi12}
holds.  Recall $\Psi_\lambda^{12}$ from~\eqref{eq:int}. We abbreviate
$\Psi_\lambda^{12} \coloneqq \Psi_\lambda^{12}(X_\infty)$ and compute,
using Lemma~\ref{l4b}
\begin{equation}\label{eq:fourth}
  \begin{aligned}
    \lambda^2\mathbf E\bigl[\bigl( (\lambda+1)\Psi_\lambda^{12} -
    1\bigr)^4\bigr] \xrightarrow{\lambda\to\infty} \frac{3}{4}.
  \end{aligned}
\end{equation}
Therefore, there is $C>0$, such that
\begin{align}
  \mathbf P(|(\lambda+1)\Psi_\lambda^{12} - 1|>\varepsilon) & \leq
  \frac{\mathbf E[((\lambda+1)\Psi_\lambda^{12} -
    1)^4]}{\varepsilon^4} \leq \frac{C}{\varepsilon^4 \lambda^2}.
\end{align}
It follows that almost surely $|(n+1)\Psi^{12}_n - 1|>\varepsilon$ for
at most finitely many $n$ by the Borel--Cantelli lemma. Thus,
\begin{align}
  1 = \lim_{\lambda\to\infty} \lfloor \lambda \rfloor
  \Psi_{\lceil\lambda\rceil}^{12} \leq \liminf_{\lambda\to\infty}
  (\lambda+1)\Psi_\lambda^{12} \leq \limsup_{\lambda\to\infty}
  (\lambda+1)\Psi_\lambda^{12} \leq
  \lim_{\lambda\to\infty}\lceil\lambda+1\rceil
  \Psi_{\lfloor\lambda\rfloor}^{12}=1
\end{align}
holds almost surely and the result is shown.

\subsection{Proof of Lemma~\ref{L.reform}}
Recall that~\eqref{eq:convPsi12} is the short form of
\begin{align}
  (\lambda+1) \int \mu_\infty^{\otimes 2}(d(u_1,u_2)) e^{-\lambda
    r(u_1, u_2)} \xrightarrow{\lambda\to\infty} 1.
\end{align}
Furthermore, since $ F_i(\varepsilon)$ is the
probability of sampling a leaf from $B_i(\varepsilon)$
under the sampling measure (and hence,
$F_i(\varepsilon)^2$ is the probability of picking two
leaves from $B_i(\varepsilon)$), for every realisation
of $N_\ve, F_i$ and $\mu_\infty$ we have
\begin{align}
  \frac1\ve \sum_{i=1}^{N_\ve}F_i(\ve)^2 & =
  \frac{1}{\ve} \int \mu_\infty^{\otimes 2}(d(u_1,u_2))
  \ind{r(u_1,u_2)\leq \ve} = \frac
  1{\ve} \nu[0,\varepsilon].
\end{align}
Here we set $\nu \coloneqq r( \cdot,\cdot)_\ast \mu_\infty^{\otimes
  2}$, i.e.\ the distribution of the distance of two randomly chosen
leaves. Thus, the assertion of Lemma~\ref{L.reform} is the equivalence
of $\lambda\int e^{-\lambda x} \nu(dx)
\xrightarrow{\lambda\to\infty}1$ and $\frac 1\varepsilon
\nu[0,\varepsilon] \xrightarrow{\varepsilon \to 0} 1$. This
equivalence is implied by classical Tauberian Theorems, as e.g.\ given
in Theorem~3, Chapter XIII.5 of \cite{Fel66}.

\subsection{Proof of Proposition~\ref{P.fluc}}
We proceed in three steps.
\begin{itemize}
\item \textbf{Step 1:} Tightness of one-dimensional distributions
  of $\mathcal Z^\lambda$.
\item \textbf{Step 2:} Moments and covariance structure.
\item \textbf{Step 3:} Tightness of $\mathcal Z^\lambda$ in path-space
  (in the space of continuous functions).
\end{itemize}

\medskip
\noindent
\emph{Step 1: Tightness of one-dimensional distributions of $\mathcal
  Z^\lambda$.}  We obtain from Remark~\ref{rem:reference},
\begin{equation}
  \label{eq:5367}
  \begin{aligned}
    \mathbf E[(Z_t^\lambda)^2] & = \lambda \mathbf E[(\lambda t+1)^2
    \Psi_{\lambda t}^{12,34} - 2(\lambda t+1)\Psi^{12}_{\lambda t}+1]
    \xrightarrow{ \lambda\to\infty} \frac1{2t},
  \end{aligned}
\end{equation}
which shows the claimed tightness.

\medskip
\noindent
\emph{Step 2: Moments and covariance structure.}  For the covariance
structure, we have to compute moments of
$\Psi^{12}_{s\lambda}\Psi^{12}_{t\lambda}$ which are not included in
the manuscript up to here. We set
\begin{equation}
  \label{eq:21c}
  \begin{aligned}
    \Psi_{\lambda, \lambda'}^{12,23}(\overline{(U,r,\mu)}) & \coloneqq
    \langle \mu^{\otimes \N}, e^{-\lambda r(u_1, u_2)
      - \lambda' r(u_2,u_3)}\rangle,\\
    \Psi_{\lambda, \lambda'}^{12,34}(\overline{(U,r,\mu)}) & \coloneqq
    \langle \mu^{\otimes \N}, e^{-\lambda r(u_1, u_2) -
      \lambda'r(u_3,u_4)}\rangle,\\
    \Upsilon^{12}_\lambda & \coloneqq  \Psi_\lambda^{12} - \frac{1}{\lambda+1},\\
    \Upsilon_{\lambda,\lambda'}^{12,23} & \coloneqq  \Psi_{\lambda,
      \lambda'}^{12,23} - \frac{1}{2} \Psi_{\lambda+\lambda'}^{12} -
    \frac{1}{\lambda+2}\Psi_\lambda^{12} -
    \frac{1}{\lambda'+2}\Psi_{\lambda'}^{12} \\ & \qquad \qquad
    \qquad \qquad \qquad \qquad \qquad \qquad + \frac{\lambda
      \lambda' + 4 \lambda + 4 \lambda' + 12}{2 (\lambda + 2)
      (\lambda' + 2) (\lambda + \lambda' + 3)},\\
    \Upsilon_{\lambda,\lambda'}^{12,34} & \coloneqq \Psi_{\lambda, \lambda'}^{12,34} -
    \frac 43 \Psi_{\lambda, \lambda'}^{12,23} + \frac{4}{15}
    \Psi_{\lambda + \lambda'}^{12} + \frac{1}{3(\lambda+5)}
    \Psi_\lambda^{12}+ \frac{1}{3(\lambda'+5)} \Psi_{\lambda'}^{12}
    \\ & \qquad \qquad \qquad \qquad \qquad \qquad \qquad \qquad -
    \frac{ 4 \lambda \lambda' + 25 \lambda + 25 \lambda' +150}{15
      (\lambda + 5) (\lambda' + 5) (\lambda + \lambda' + 6)}.
  \end{aligned}
\end{equation}
Then
\begin{equation}
  \label{eq:189c}
  \begin{aligned}
    \Omega\Psi_{\lambda, \lambda'}^{12,23} & = -(\lambda + \lambda'
    + 3)\Psi_{\lambda, \lambda'}^{12,23} +
    \Psi_\lambda^{12} + \Psi_{\lambda'}^{12} + \Psi_{\lambda+\lambda'}^{12},\\
    \Omega\Psi_{\lambda, \lambda'}^{12,34} & = -(\lambda + \lambda'
    + 6)\Psi_{\lambda, \lambda'}^{12,34} + 4\Psi_{\lambda,
      \lambda'}^{12,23} +
    \Psi_\lambda^{12} + \Psi_{\lambda'}^{12},\\
    \Omega\Upsilon_\lambda^{12} & = -(\lambda+1) \Upsilon_\lambda^{12},\\
    \Omega\Upsilon_{\lambda, \lambda'}^{12,23} & = -(\lambda + \lambda' +3)
    \Upsilon_{\lambda, \lambda'}^{12,23},\\
    \Omega\Upsilon_{\lambda, \lambda'}^{12,34} & = -(\lambda + \lambda'
    +6) \Upsilon_{\lambda, \lambda'}^{12,34}.
  \end{aligned}
\end{equation}
Now, in order to compute the covariance structure and higher moments,
we write, again using \textsc{Mathematica},
\begin{equation}
  \label{eq:31d}
  \begin{aligned}
    \mathbf E[Z^\lambda_s Z^\lambda_t] & = \lambda \mathbf
    E[(s\lambda + 1)(t\lambda + 1)\Psi^{12,34}_{s\lambda,
      t\lambda}-1] \\ & = \frac{4 s t \lambda^3 }{((s+t)\lambda+1)
      ((s+t)\lambda+3) ((s+t)\lambda+6)} \\ &
    \xrightarrow{\lambda\to\infty}\frac{4st}{(s+t)^3}.
  \end{aligned}
\end{equation}
For the third moment,
\begin{align}
  \label{eq:31e}
  \mathbf E[(Z^\lambda_t)^3] = \lambda^{3/2}\frac{16 t^3\lambda^3
    (5t^2\lambda^2 + 9 t\lambda - 10)}{(t\lambda + 2)(t\lambda + 3)
    (t\lambda + 5) (2t\lambda+1) (2t\lambda+3) (3t\lambda+1) (3
    t\lambda+10)} \xrightarrow{\lambda\to\infty} 0.
\end{align}
The fourth moment was already given in~\eqref{eq:fourth}.

\medskip
\noindent
\emph{Step 3: Tightness of $\mathcal Z^\lambda$ in path-space.}  Here,
we show that there exists $C>0$, which is independent of $\lambda$,
such that
\begin{align}
  \label{eq:31}
  \sup_{\lambda > 0} \lambda \cdot \mathbf E[((t\lambda+1)
  \Psi^{12}_{t\lambda}(X_\infty) - (s\lambda+1)\Psi^{12}_{s\lambda}(X_\infty))^2]\leq C(t-s)^2
\end{align}
for $0<s\leq t$. Tightness then follows from Step~1 and the
Kolmogorov--Chentsov criterion. In order to show~\eqref{eq:31}, we
simplify the notation and suppress the dependency on $X_\infty$.  From
\eqref{eq:21c}, we read off $\mathbf E[\Psi^{12,34}_{s\lambda,
  t\lambda}]$. Then, the result follows from
\begin{equation}
  \label{eq:31c}
  \begin{aligned}
    ((t\lambda+1)\Psi^{12}_{t\lambda} & -
    (s\lambda+1)\Psi^{12}_{s\lambda})^2 \\ & =
    (t\lambda+1)^2\Psi^{12,34}_{t\lambda, t\lambda} - 2
    (s\lambda+1)(t\lambda+1)\Psi^{12,34}_{s\lambda, t\lambda} +
    (s\lambda+1)^2\Psi^{12,34}_{s\lambda, s\lambda},
  \end{aligned}
\end{equation}
with an application of \textsc{Mathematica} and \eqref{eq:31} is
shown.

\subsection{Proof of Theorem~\ref{T3}}
\label{ss:proofT3}
As in the proof of Theorem~\ref{T1}, we only need to show the assertion in the
case $\alpha=0$ due to absolute continuity recalled in Proposition~\ref{P:main}.
In this case, the proof of the theorem requires the following three steps:
\begin{itemize}
\item \textbf{Step 1:} Instead of starting in $\mathcal X_0$, it suffices
  to start in equilibrium and then show~\eqref{eq:small1}.
\item \textbf{Step 2:} Assume that $\mathcal X_0$ is in equilibrium, i.e.\
  $X_0\stackrel d = X_\infty$.
  Then, the set of processes
  \begin{align}
    \label{eq:67}
    \{((\lambda+1)\Psi^{12}_\lambda(X_t)-1)_{t\geq 0}:\lambda>0\}
  \end{align}
  is tight in $\mathcal C_{\R}([0,\infty))$.
\item \textbf{Step 3:} The finite-dimensional distributions of $
  \{((\lambda+1)\Psi^{12}_\lambda(X_t)-1)_{t\geq 0}:\lambda>0\} $
  converge to~0 as $\lambda\to\infty$.
\end{itemize}
These steps imply that the object in \eqref{eq:small1}
converges to $0$ in distribution, which is (in the case of convergence to a
constant) equivalent to convergence in probability.

\medskip
\noindent
\emph{Step 1: Start in equilibrium.} Let $\widetilde{\mathcal X} =
(\widetilde X_t)_{t\geq 0}$ with $\widetilde X_t =
\overline{(\widetilde U_t, \widetilde r_t, \widetilde \mu_t)}$ be the
tree-valued Fleming--Viot process started in equilibrium. Then,
$\mathcal X$ and $\widetilde{\mathcal X}$ can be coupled such that for
$\varepsilon>0$
\begin{align}\label{dg12}
  \langle \widetilde \mu_t^{\otimes\mathbb N}, \phi\rangle = \langle
  \mu_t^{\otimes\mathbb N}, \phi\rangle
\end{align}
for all $\phi: \mathbb R^{\binom{\mathbb N}{2}}\to\mathbb R$ which
depends only on elements in $\underline{\underline r}$ with values at
most $t$.

Recall the Moran model approximation. Observe that distances evolve
with time of rate $t$ and may change by resampling
events by being reset to zero.  The coupling arises in this model for
every $N$ by taking the same resampling events for $\mathcal X$ and
$\widetilde{\mathcal X}$ which defines the two processes on a common
probability space. For this coupling,
\begin{equation}
  \label{eq:54}
  \begin{aligned}
    \sup_{\varepsilon\leq t \leq \tau}(\lambda+1)
    |\Psi^{12}_\lambda(X_t) - \Psi^{12}_\lambda(\widetilde X_t)| & =
    \sup_{\varepsilon\leq t \leq \tau}
    (\lambda+1)|\langle\mu_t^{\otimes\mathbb
      N}-\widetilde\mu_t^{\otimes\mathbb N},
    \ind{r(u_1,u_2)>t}e^{-\lambda r(u_1,u_2)}\rangle | \\
    & \leq \langle\mu_t^{\otimes\mathbb N}, \lambda e^{-\lambda
      \varepsilon}\rangle + \langle\widetilde \mu_t^{\otimes\mathbb
      N}, \lambda e^{-\lambda \varepsilon}\rangle = 2\lambda
    e^{-\lambda\varepsilon} \xrightarrow{\lambda\to\infty} 0.
  \end{aligned}
\end{equation}
Taking now on this common probability space the limit $N \to \infty$
results in the coupled laws.

Hence, it suffices to prove the assertion when started in equilibrium,
i.e.\ 1.\ holds for all assertions which concern properties which depend only
on distances below a threshold and in particular all limiting properties close to the leaves.

\medskip
\noindent
\emph{Step 2: Tightness of
  $\{((\lambda+1)\Psi^{12}_\lambda(X_t)-1)_{t\geq 0}:\lambda>0\}$.}
This is clearly implied by Lemma~\ref{l5} and the Kolmogorov--Chentsov
criterion for tightness in $\mathcal C_{\R}([0,\infty))$.

\medskip
\noindent
\emph{Step 3: Convergence of finite-dimensional distributions to~0.}
This follows from Lemma~\ref{l2}.

\subsection{Proof of Theorem~\ref{T4}}
\label{ss.proofT4}
We proceed in several steps.
\begin{itemize}
\item {\bf Step 0}: Warm up; computation of first two moments of
  $W_\lambda(t)- W_\lambda(s)$.
\item {\bf Step 1}: The family $(W_\lambda)_{\lambda>0}$ is tight in
  $\mathcal C_{\R}([0,\infty))$.
\item {\bf Step 2}: If $(W(t))_{t\geq 0}$ is a limit point, then
  $(W(t))_{t\geq 0}$ and $(W(t)^2-t)_{t\geq 0}$ are both martingales.
\end{itemize}

\noindent Throughout we let $(\mathcal F_t)_{t\geq 0}$ be the
canonical filtration of the process $(X_t)_{t\geq 0}$.

\medskip

\noindent
\emph{Step 0: Computation of first two moments of $W_\lambda(t) -
  W_\lambda(s)$.}  We start with some basic computations which we will
need in the sequel. First, recall that $(\lambda + 1)\Upsilon^{12}_\lambda
= (\lambda+1)\Psi^{12}_\lambda -1$ and by Lemma~\ref{l1}
\begin{equation}
  \label{eq:338a}
  \begin{aligned}
    \mathbf E[(\lambda+1)\Upsilon^{12}_\lambda(X_t)|\mathcal F_s] =
    e^{-\lambda (t-s)}\cdot (\lambda+1)\Upsilon^{12}_\lambda(X_s).
  \end{aligned}
\end{equation}
Then, by Fubini's theorem
\begin{equation}
  \label{eq:339}
  \begin{aligned}
    \mathbf E[W_\lambda(t) - W_\lambda(s)|\mathcal F_s] & = \lambda
    \int_s^t \mathbf E[(\lambda+1)\Upsilon^{12}_\lambda(X_r)| \mathcal
    F_s]\, dr \\ & = \lambda \int_s^t e^{-\lambda (r-s)} \cdot
    (\lambda+1)\Upsilon^{12}_\lambda(X_s) \, dr \\ & = (1-e^{-\lambda
      (t-s)})\cdot (\lambda+1)\Upsilon^{12}_\lambda(X_s).
  \end{aligned}
\end{equation}
\sloppy We compute
\begin{equation}
  \label{eq:339a}
  \begin{aligned}
    ((\lambda+1)\Upsilon^{12}_\lambda)^2 & = (\lambda+1)^2 \Psi^{12,34} -
    2(\lambda+1)\Psi^{12}_\lambda + 1 \\ & = (\lambda+1)^2
    \Big(\Upsilon_\lambda^{12,34} + \frac 43 \Upsilon_\lambda^{12,23} +
    \frac{2}{5} \Upsilon_\lambda^{12,12} +
    \frac{2(\lambda+6)}{(\lambda+2)(\lambda+5)}\Upsilon_\lambda^{12} \\
    & \qquad \qquad \qquad \qquad +
    \frac{4\lambda^2+18\lambda+9}{(\lambda+1)(\lambda+3)(2\lambda+1)(2\lambda+3)}\Big)
    - 2(\lambda+1)\Upsilon^{12}_\lambda - 1 \\ & = (\lambda+1)^2\Big(
    \Upsilon^{12,34}_\lambda + \frac 43 \Upsilon_\lambda^{12,23} + \frac{2}{5}
    \Upsilon_\lambda^{12,12}\Big) - \frac{8 (\lambda +
      1)}{(\lambda+2)(\lambda+5)} \Upsilon^{12}_\lambda \\ & \qquad \qquad
    \qquad \qquad \qquad \qquad \qquad \qquad + \frac{2
      \lambda^2}{(\lambda + 3) (2\lambda + 1) (2\lambda + 3)},
  \end{aligned}
\end{equation}
which already implies that
\begin{align}\label{eq:339b}
  \mathbf E[W_\lambda(t) - W_\lambda(s)|\mathcal F_s]
  \xrightarrow{\lambda\to\infty} 0 \quad \text{ in }L^2,
\end{align}
since we started in equilibrium. Hence $\mathbf E[\Upsilon^k_\lambda(X_0)]
= 0$ as in Lemma~\eqref{eq:EPhi0}.

Next, we come to the second moment
\begin{equation}
  \label{eq:483}
  \begin{aligned}
    \mathbf E[(W_\lambda(t) & - W_\lambda(s))^2|\mathcal F_s] =
    \lambda^2 \cdot \mathbf E\Big[\Big(\int_s^t (\lambda+1)^2
    \Upsilon^{12}_\lambda(r)\, dr\Big)^2\Big|\mathcal F_s\Big] \\ & =
    2\lambda^2 \cdot \int_s^t \int_{r_1}^t\mathbf E\big[
    (\lambda+1)\Upsilon^{12}_\lambda(X_{r_1}) \mathbf
    E[(\lambda+1)\Upsilon^{12}_\lambda(X_{r_2})|\mathcal F_{r_1}] \big|
    \mathcal F_{s}\big]\, dr_2\, dr_1 \\ & = 2\lambda^2 \cdot \int_s^t
    \int_{r_1}^t\mathbf E\big[ (\lambda+1)^2\Upsilon^{12}_\lambda(X_{r_1})
    e^{-(\lambda+1)(r_2-r_1)} \Upsilon^{12}_\lambda(X_{r_1}) \big|
    \mathcal F_{s}\big] \, dr_2\, dr_1 \\ & = 2\frac{\lambda^2}{\lambda+1}
    \int_s^t (1-e^{-(\lambda+1)(t-r_1)}) \cdot \mathbf
    E[((\lambda+1)\Upsilon^{12}_\lambda(X_{r_1}))^2|\mathcal F_s] \, dr_1 \\
    & = t-s + A_{\lambda},
  \end{aligned}
\end{equation}
with
{\allowdisplaybreaks
  \begin{align}
    \label{eq:484}
   \begin{split}
    A_\lambda & =  A_{\lambda,1} + A_{\lambda,2} + A_{\lambda,3},\\
    A_{\lambda,1} & =   \int_s^t
    \Big(\frac{4\lambda^4}{(\lambda+1)(\lambda + 3) (2\lambda + 1)
      (2\lambda + 3)} -1\Big) \, dr_1,\\
    A_{\lambda,2} & =   \frac{2\lambda^2(\lambda+1)}{2\lambda+6}\cdot
    \Big( (1-e^{-(2\lambda+6)(t-s)}) \Upsilon^{12,34}_\lambda(X_s) \\
    &
    \qquad \qquad + \frac{4(2\lambda+6)}{3(2\lambda+3)}
    (1-e^{-(2\lambda+3)(t-s)}) \Upsilon^{12,23}_\lambda(X_s)\\
    & \qquad \qquad \qquad \qquad +
    \frac{2(2\lambda+6)}{5(2\lambda+2)}(1-e^{-(2\lambda+2)(t-s)})
    \Upsilon^{12,12}_\lambda(X_s) \\
    & \qquad \qquad \qquad \qquad \qquad
    \qquad -
    \frac{8(2\lambda+6)}{\lambda^2(\lambda+2)(\lambda+5)}(1-e^{-(\lambda+1)(t-s)})\Upsilon^{12}_\lambda(X_s)\Big)
  \\ & =   A_{\lambda,21} + A_{\lambda,22} + A_{\lambda,23},\\*
    A_{\lambda,3} & = {2\lambda^2}{\lambda+1} \int_s^t
    e^{-(\lambda+1)(t-r_1)} \cdot \mathbf
    E[((\lambda+1)\Upsilon^{12}_\lambda(X_{r_1}))^2|\mathcal F_s] \, dr_1,
\end{split}
\end{align}}
where
{\allowdisplaybreaks
  \begin{align*}
    A_{\lambda,21} & = \frac{\lambda^2(\lambda+1)}{\lambda+3}
    \Big(\Psi^{12,34}_\lambda(X_s) -
    \frac{4\lambda^2+18\lambda+9}{(\lambda+1)(\lambda+3)(2\lambda+1)(2\lambda+3)}\Big),\\*
    A_{\lambda,22} & = - \frac{2 (\lambda+1)(\lambda^3 + 6 \lambda^2 +
      8 \lambda + 24)}{(\lambda+3)(2 + \lambda) (5 + \lambda)}
    \Upsilon^{12}_\lambda(X_s)
    ,\\
    A_{\lambda,23} & = -
    e^{-(2\lambda+6)(t-s)}\frac{2\lambda^2(\lambda+1)}{2\lambda+6}
    \Upsilon^{12,34}_\lambda(X_s) -
    \frac{8\lambda^2(\lambda+1)}{3(2\lambda+3)} e^{-(2\lambda+3)(t-s)}
    \Upsilon^{12,23}_\lambda(X_s) \\*
    & \qquad -
    e^{-(2\lambda+2)(t-s)}\frac{4\lambda^2(\lambda+1)}{5(2\lambda+2)}
    \Upsilon^{12,12}_\lambda(X_s) +
    \frac{2(\lambda+1)}{(\lambda+2)(\lambda+5)}
    e^{-(\lambda+1)(t-s)}\Upsilon^{12}_\lambda(X_s).
  \end{align*}} Since
\begin{equation}
  \label{eq:842}
  \begin{aligned}
    A_{\lambda,1} & \xrightarrow{\lambda\to\infty} 0,\\
    A_{\lambda,21}, A_{\lambda,22} & \xrightarrow{\lambda\to\infty} 0 \quad\text{ in }L^2,\\
    A_{\lambda,23}, A_{\lambda,3} & \xrightarrow{\lambda\to\infty} 0
    \quad\text{ in }L^1,
  \end{aligned}
\end{equation}
we have
\begin{align}\label{eq:340b}
  \mathbf E[(W_\lambda(t) - W_\lambda(s))^2|\mathcal F_s] &
  \xrightarrow{\lambda\to\infty} t-s \quad\text{ in }L^1.
\end{align}

\medskip

\noindent\emph{Step 1: The family $(W_\lambda)_{\lambda>0}$ is tight
  in $\mathcal C_{\R}([0,\infty) )$.} Again, we use the Kolmogorov--Chentsov
criterion. To this end we bound the fourth moments of the increments
in $W_\lambda$ by
\begin{equation}
  \label{eq:31f}
  \begin{aligned}
    \mathbf E&[ (W_\lambda(t) - W_\lambda(s))^4] \\ & = \lambda^4
    \cdot \mathbf E\Big[ \Big( \Upsilon^{12}_\lambda(X_t) -
    \Upsilon^{12}_\lambda(X_0) - \int_0^t (\lambda+1)
    \Upsilon^{12}_\lambda(X_s)ds - \Psi^{12}_\lambda(X_t) +
    \Psi^{12}_\lambda(X_0)\Big)^4\Big] \\ & \lesssim \lambda^4 \Big(
    \mathbf E[\Big(\Upsilon^{12}_\lambda(X_t) -
    \Upsilon^{12}_\lambda(X_0) -
    \int_0^t (\lambda+1) \Upsilon^{12}_\lambda(X_s)ds \Big)^4\Big] \\
    & + \mathbf E\big[(\Psi^{12}_\lambda(X_t) -
    \Psi^{12}_\lambda(X_0))^4\big]\Big).
  \end{aligned}
\end{equation}
The second term is bounded by $Ct^2$ for some $C>0$ which is
independent of $\lambda$ by Lemma~\ref{l5}. For the first term, we use
the Burkholder--Davis--Gundy inequality and write (recall
Lemma~\ref{l1} and the quadratic variation of the semimartingale
$\Psi^{12}_\lambda(\mathcal X) = (\Psi^{12}_\lambda(X_t))_{t\geq 0}$
from Remark~\ref{rem:qv}) by
\begin{equation}\label{eq:int1}
  \begin{aligned}
    \lambda^4 & \cdot \mathbf E \Big[\Big(\Upsilon^{12}_\lambda(X_t) -
    \Upsilon^{12}_\lambda(X_0) + \int_0^t (\lambda+1)
    \Upsilon^{12}_\lambda(X_s)ds \Big)^4\Big] \\ & \lesssim \lambda^4
    \mathbf E\big[ [\Psi^{12}_\lambda(\mathcal X)]_t^2\big] \\ & =
    \lambda^4 \int_0^t \int_0^s \mathbf E\big[
    (\Psi^{12,23}_\lambda(X_r) - \Psi^{12,34}_\lambda(X_r)) \cdot
    \mathbf E[ \Psi^{12,23}_\lambda(X_s) - \Psi^{12,34}_\lambda(X_s)
    |\mathcal F_r]\big] \, dr \, ds.
  \end{aligned}
\end{equation}
We have to show that the integrand is of order $1/\lambda^4$, if $X_0$
is in equilibrium.

First, we compute the conditional expectation using
Lemma~\ref{l1}.2 using~\eqref{eq:16} and obtain
\begin{equation}
  \label{eq:31g}
  \begin{aligned}
    \mathbf E[ & \Psi^{12,23}_\lambda(X_s) - \Psi^{12,34}_\lambda(X_s)
    |\mathcal F_r] \\ & = \mathbf E\Big[\frac{1}{10}
    \Upsilon^{12,12}_\lambda(X_s) -\frac 13 \Upsilon^{12,23}_\lambda(X_s) -
    \Upsilon^{12,34}_\lambda(X_s) - \frac{2}{(\lambda+2)(\lambda+5)}
    \Upsilon^{12}_\lambda(X_s)\Big|\mathcal F_r\Big] \\ & \qquad \qquad
    \qquad \qquad \qquad \qquad \qquad \qquad \qquad \qquad +
    \frac{\lambda^2}{(\lambda+1)(\lambda+3)(2\lambda+1)(2\lambda+3)}
    \\ & = e^{-(2\lambda+1)(s-r)} \frac{1}{10}
    \Upsilon^{12,12}_\lambda(X_r) -e^{-(2\lambda+3)(s-r)}\frac 13
    \Upsilon^{12,23}_\lambda(X_r) -
    e^{-(2\lambda+6)(s-r)}\Upsilon^{12,34}_\lambda(X_r) \\ & \qquad \qquad
    - e^{-(\lambda+1)(s-r)}\frac{2}{(\lambda+2)(\lambda+5)}
    \Upsilon^{12}_\lambda(X_r) +
    \frac{\lambda^2}{(\lambda+1)(\lambda+3)(2\lambda+1)(2\lambda+3)}.
  \end{aligned}
\end{equation}
Abbreviating $\Upsilon^k_\lambda := \Upsilon^k_\lambda(X_\infty)$,
note that by~\eqref{eq:sometimes} and the last display, the integrand
in~\eqref{eq:int1} is a linear combination of terms of the form
$\mathbf E[(\Psi^{12,23}_\lambda -
\Psi^{12,34}_\lambda)(\Psi^k_\lambda - \mathbf E[\Psi^k_\lambda])]$
for $k\in\{12; 12,12; 12,23; 12,34\}$. We compute all these terms:
\begin{equation}
  \label{eq:31h}
  \begin{aligned}
    \mathbf E\Big[& (\Psi^{12,23}_\lambda -
    \Psi^{12,34}_\lambda)\Big(\Psi^{12}_\lambda -
    \frac{1}{\lambda+1}\Big)\Big] \\ & = \mathbf E[\Psi^{12,23,45} -
    \Psi^{12,34,56}] - \frac{1}{\lambda+1}\mathbf
    E[\Psi^{12,23}_\lambda - \Psi^{12,34}_\lambda]\\ & = \frac{4
      \lambda^3 (5 \lambda^2 + 9 \lambda -10)}{(\lambda+1)^2
      (\lambda+3) (\lambda+5) (4 \lambda^2 + 8 \lambda + 3 ) (9
      \lambda^3 + 51 \lambda^2 + 76 \lambda + 20)} \\ & = \mathcal O\Big(\frac{1}{\lambda^4}\Big),
  \end{aligned}
\end{equation}
\begin{equation}
  \label{eq:31ha}
  \begin{aligned}
    \mathbf E\Big[& (\Psi^{12,23}_\lambda -
    \Psi^{12,34}_\lambda)\Big(\Psi^{12,12}_\lambda -
    \frac{1}{2\lambda+1}\Big)\Big] \\ & = \mathbf
    E[\Psi^{12,12,34,45}_\lambda - \Psi^{12,12,34,56}] -
    \frac{1}{2\lambda+1} \mathbf E[\Psi^{12,23}_\lambda -
    \Psi^{12,34}_\lambda] \\ & = \frac{2 \lambda^3 (1576 \lambda^6 + \mathcal O(\lambda^5))}
    {3 (\lambda + 3) (2 \lambda + 1)^2 (4 \lambda + 15) (2 \lambda^2 +
      5 \lambda + 3)^2 (96 \lambda^5 + \mathcal O(\lambda^4))}
    \\ & = \mathcal O\Big(\frac{1}{\lambda^4}\Big),
  \end{aligned}
\end{equation}
\begin{equation}
  \label{eq:31hb}
  \begin{aligned}
    \mathbf E\Big[& (\Psi^{12,23}_\lambda -
    \Psi^{12,34}_\lambda)\Big(\Psi^{12,23}_\lambda -
    \frac{5\lambda+3}{(\lambda+1) (2\lambda+1) (2\lambda+3)}\Big)\Big]
    \\ & = \mathbf E\Big[\Psi^{12,23,45,56}_\lambda -
    \Psi^{12,23,45,67}] - \frac{5\lambda+3}{(\lambda+1) (\lambda+3)
      (2\lambda+1) (2\lambda+3)}\mathbf E[\Psi^{12,23}_\lambda -
    \Psi^{12,34}_\lambda] \\ & = \frac{2 \lambda^2 (683976 \lambda^9 +
      \mathcal O(\lambda^8))}
    {9 (\lambda+3) (2\lambda+1)^2 (2\lambda^2+5\lambda+3)^2 (4
      \lambda^2 + 41 \lambda + 105) (1152 \lambda^7 + \mathcal O(\lambda^6))}
    \\ & = \mathcal
    O\Big(\frac{1}{\lambda^5}\Big),
  \end{aligned}
\end{equation}
\begin{equation}
  \label{eq:31hc}
  \begin{aligned}
    \mathbf E\Big[& (\Psi^{12,23}_\lambda -
    \Psi^{12,34}_\lambda)\Big(\Psi^{12,34}_\lambda - \frac{4 \lambda^2
      + 18 \lambda + 9}{(\lambda+1) (\lambda+3) (2\lambda+1)
      (2\lambda+3)}\Big)\Big] \\ & = \mathbf
    E[\Psi^{12,23,45,67}_\lambda - \Psi^{12,34,56,78}_\lambda] -
    \frac{4 \lambda^2 + 18 \lambda + 9}{(\lambda+1) (\lambda+3)
      (2\lambda+1) (2\lambda+3)}\mathbf E[\Psi^{12,23}_\lambda -
    \Psi^{12,34}_\lambda] \\ & = \frac{4 \lambda^2 (20480 \lambda^{11}
      + \mathcal O(\lambda^{10}))}
    {(\lambda+3)^2 (\lambda+7) (2\lambda+1)^2 (2 \lambda^2+ 5 \lambda
      + 3 )^2 ( 4608 \lambda^9 + \mathcal O(\lambda^8))}
    \\ & = \mathcal O\Big(\frac{1}{\lambda^5}\Big),
  \end{aligned}
\end{equation}
which shows that the integrand on the right hand side
of~\eqref{eq:int1} is $\mathcal O(1/\lambda^4)$.

Hence, we have shown that there is a constant $C>0$ such that
\begin{align}
  \label{eq:31i}
  \mathbf E[ (W_\lambda(t) - W_\lambda(s))^4] \leq C(t-s)^2
\end{align}
and we have shown tightness of $\{(W_\lambda(t))_{t\geq 0}:
\lambda>0\}$.

\medskip

\noindent\emph{Step 2: If $(W_t)_{t\geq 0}$ is a limit point,
  $(W_t)_{t\geq 0}$ as well as $(W_t^2-t)_{t\geq 0}$ are
  martingales.}
Let $\mathcal W = (W_t)_{t\geq 0}$ be a weak limit point of
$\{(W_\lambda(t))_{t\geq 0}: \lambda>0\}$. The claimed martingale
properties follow by the same arguments as in Step~3 of the proof of
Theorem~\ref{T2}.

\section{Proof of Theorems~\ref{T5} and~\ref{T6}}
\label{S:proofT5}

\subsection{Proof of Theorem~\ref{T5}}
To get a first idea, let us do a little computation. By Fubini's
theorem, dominated convergence theorem and Lemma~\ref{l1}, we get
\begin{equation}
  \label{pp68}
  \begin{aligned}
    \mathbf E\Big[ \int_0^T \lim_{\lambda\to\infty}
    \Psi^{12}_\lambda(X_t) dt\Big] & = \lim_{\lambda\to\infty}
    \int_0^T \mathbf E\big[ \Psi^{12}_\lambda(X_t)\big] dt \\ & =
    \lim_{\lambda\to\infty} \frac{1}{\lambda+1}\int_0^T (1 - \mathbf
    E\big[ \Omega\Psi^{12}_\lambda(X_t)\big])\, dt \\ & =
    \lim_{\lambda\to\infty} \frac{1}{\lambda+1}\Big( T - \mathbf
    E[\Psi^{12}_\lambda(X_T) - \Psi^{12}_\lambda(0)]\Big) = 0,
  \end{aligned}
\end{equation}
thus, since $\mu_t$ has no atom $\iff$ $\lim_{\lambda\to\infty}
\Psi^{12}_\lambda(X_t)=0$,
\begin{align}\label{pp7}
  \mathbf P(\mu_t & \text{ has no atom for \emph{almost} all }0\leq
  t\leq T) = \mathbf P\Big(\int_0^T \ind{\lim_{\lambda\to\infty}
    \Psi^{12}_\lambda(X_t)>0}\, dt=0\Big) = 1.
\end{align}
Our task is to remove the \emph{almost} in the $\mathbf P(.)$ on the
left hand side.

\medskip

As in the proof of Theorem~\ref{T3}, it suffices to show the assertion
if $X_0\stackrel d = X_\infty$. Recall that $\Omega \Psi^{12}_\lambda = 1 -
(\lambda+1)\Psi^{12}_\lambda$ from Lemma~\ref{l1}. Hence, for all
$\lambda>0$, the process $(M_\lambda(t))_{t\geq 0}$ defined as
\begin{align}\label{pp8}
  M_\lambda(t) \coloneqq \Psi^{12}_\lambda (X_t) -
  \Psi^{12}_\lambda(X_0) - \int_0^t (1 - (\lambda +
  1)\Psi^{12}_\lambda(X_s))\, ds,
\end{align}
is a continuous martingale with quadratic variation (recall
Remark~\ref{rem:qv})
\begin{align}\label{pp9}
  [M_\lambda]_t = \int_0^t \bigl(\Psi^{12,23}_\lambda(X_s) -
  \Psi^{12,34}_\lambda(X_s)\bigr) ds.
\end{align}
Using~\eqref{eq:16} and recalling from Lemma~\ref{l1} that the $\Upsilon$'s are
mean-zero martingales
\begin{align}\label{pp10}
  \lim_{\lambda\to\infty} \mathbf E[M_\lambda(t)^2] = \lim_{\lambda \to \infty}
  \int_0^t \mathbf E[\Psi^{12,23}_\lambda(X_s) - \Psi^{12,34}_\lambda(X_s)]\, ds
  = 0.
\end{align}
This implies for all $\varepsilon>0$ by Doob's maximal inequality:
\begin{align}\label{eq:imp1}
  \lim_{\lambda\to\infty}\mathbf P\Big(\sup_{0\leq t\leq T}|M_\lambda(t)|>\varepsilon\Big) =0.
\end{align}

Then we can start calculating as follows. For all $\varepsilon>0$,
using again Fubini's theorem and then~\eqref{eq:43},
\begin{equation}
  \label{eq:imp2}
  \begin{aligned}
    \lim_{\lambda\to\infty} \mathbf P \Big( & \sup_{0\leq t\leq
      T}\Big|\int_0^t (1-(\lambda+1)\Psi^{12}_\lambda(X_s)) \,ds\Big|>
    \varepsilon\Big) \\
    & = \lim_{\lambda\to\infty} \mathbf P \bigg( \sup_{0\leq t\leq T}\Bigr(
    \int_0^t \big(1-(\lambda+1)\Psi^{12}_\lambda(X_s)\big) \, ds \Bigr)^2 >
    \varepsilon^2\bigg) \\ & \leq \lim_{\lambda\to\infty} \mathbf P
    \Big( \sup_{0\leq t\leq T}\int_0^t \big(1 -
    (\lambda+1)\Psi^{12}_\lambda(X_s)\big)^2 \,ds> \varepsilon^2\Big) \\
    & = \lim_{\lambda\to\infty} \mathbf P \Big( \int_0^T \big( 1 -
    (\lambda+1)\Psi^{12}_\lambda(X_s)\big)^2 \, ds> \varepsilon^2\Big) \\
    & \leq \lim_{\lambda\to\infty} \tfrac 1{\varepsilon^2} \int_0^T
    \mathbf E\big[\big( 1 -
    (\lambda+1)\Psi^{12}_\lambda(X_s)\big)^2\big] \,ds\\ & = 0.
  \end{aligned}
\end{equation}

Hence, by \eqref{eq:imp1} and  \eqref{eq:imp2},
there is a subsequence $\lambda_n\uparrow \infty$ with
\begin{align}\label{pp10b}
  \sup_{0 < t\leq T} |M_{\lambda_n}(t)| &\xrightarrow{n\to\infty} 0,\\
  \sup_{0 < t\leq T}\Big|\int_0^t 1 -
  (\lambda_n+1)\Psi^{12}_{\lambda_n}(X_s) \,ds\Big|
  &\xrightarrow{n\to\infty} 0 \nonumber
\end{align}
almost surely. Recall that we can characterize the existence of atoms
by the property whether we can draw two points at distance zero or not
which we can in turn characterize by the $\lambda \to \infty$ limit of
the sample Laplace transform. Combining \eqref{pp8} with \eqref{pp10b}
and with \eqref{pp7} (which allows us to discard the
$\Psi^{12}_{\lambda_n}(0)$) gives
\begin{equation}
  \label{eq:pp11}
  \begin{aligned}
    \mathbf P(\mu_t & \text{ has no atoms for all }0<t\leq T) =
    \mathbf P(\lim_{n\to\infty} \sup_{0 < t\leq T}
    \Psi^{12}_{\lambda_n}(X_t) = 0) \\ & = \mathbf
    P\bigg(\lim_{n\to\infty} \sup_{0 < t\leq T} \Big(M_{\lambda_n}(t) +
    \int_0^t (1 - (\lambda_n+1) \Psi^{12}_{\lambda_n}(X_s))\, ds\Big) = 0\bigg)
    = 1.
  \end{aligned}
\end{equation}

\subsection{Proof of Theorem~\ref{T6}}
\label{S:proofT6}
It turns out that the following simple criterion for existence of a
mark function is useful.

\begin{lemma}[Criterion for mark function]\mbox{}\\
  An   \label{l:char} mmm-space $\overline{(U,r,\mu)} \in \mathbb U_A$ admits a mark
  function if there is a sequence $\varepsilon_n\downarrow 0$ with
  \begin{align}\label{eq:81}
    \lim_{n\to\infty}\mathbf P\big(\mathfrak A_1=\mathfrak
    A_2|r_U(\mathfrak U_1, \mathfrak U_2)<\varepsilon_n\big)=1.
  \end{align}
  where $(\mathfrak U_1, \mathfrak A_1), (\mathfrak U_2, \mathfrak
  A_2)$ are two independent pairs distributed according to
  $\mu$. Equivalently,
  \begin{align}
    \label{eq:equmark}
    \frac{\langle \mu^{\otimes 2}, \ind{a_1=a_2} \ind{r(u_1, u_2)<\varepsilon_n}
      \rangle}{\langle \mu^{\otimes 2}, \ind{r(u_1, u_2)<\varepsilon_n} \rangle}
    \xrightarrow{n\to\infty} 1.
  \end{align}
\end{lemma}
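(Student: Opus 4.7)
The plan is to pass from the hypothesis, a statement about pairs of independent samples, to a pointwise almost sure statement about the regular conditional law of the mark given the location. Since $U=\text{supp}(\nu)$ (with $\nu:=(\pi_U)_\ast\mu$) and $A$ are Polish, disintegration gives a probability kernel $K\colon U\to\mathcal M_1(A)$ with $\mu(du,da)=\nu(du)K(u,da)$. Comparing to \eqref{ag.mf2}, admitting a mark function is equivalent to $K(u,\cdot)$ being a Dirac mass for $\nu$-a.e.\ $u$; defining $\phi(K):=\sum_{a\in A}K(\{a\})^2\in[0,1]$ (so that $\phi(K)=1$ iff $K$ is Dirac), the target is $\Phi(u):=\phi(K(u,\cdot))=1$ for $\nu$-a.e.\ $u$, in which case $\kappa(u)$ is the unique atom and measurability follows from a countable dense family in $\mathcal C_b(A)$.

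Next I would bring in the ultrametric structure. For each $n$ the open $\varepsilon_n$-balls partition $U$ into countably many pieces $\{B^n_i\}$ of masses $p^n_i:=\nu(B^n_i)$; these partitions refine as $n\to\infty$ and generate a filtration $(\mathcal F_n)$ whose union produces the Borel $\sigma$-algebra. Writing $i_n(u)$ for the index of the ball containing $u$ and $K^n_i(da):=(p^n_i)^{-1}\int_{B^n_i}K(u,da)\nu(du)$, Doob's martingale convergence theorem applied to $u\mapsto\int f\,dK(u,\cdot)$ for $f$ in a countable determining subset of $\mathcal C_b(A)$ gives $K^n_{i_n(u)}\Rightarrow K(u,\cdot)$ for $\nu$-a.e.\ $u$. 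Since $\{r<\varepsilon_n\}=\bigsqcup_i B^n_i\times B^n_i$ by ultrametricity, \eqref{eq:equmark} rewrites as $\sum_i(p^n_i)^2\phi(K^n_i)/\sum_i(p^n_i)^2\to 1$. Convexity of the quadratic form $\phi$ (Jensen) combined with the tower property for $\mathbf E_\nu[\cdot\mid\mathcal F_n]$ then yields the weighted-average statement
\[
\frac{\int\Phi(u)\,p^n_{i_n(u)}\,\nu(du)}{\int p^n_{i_n(u)}\,\nu(du)}\xrightarrow{n\to\infty}1.
\]

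The concluding and most delicate step is to upgrade this weighted-average convergence to $\Phi=1$ $\nu$-a.e. Arguing by contradiction, if $S:=\{\Phi\le 1-\delta\}$ had $\nu(S)>0$ the previous display would force $\int_S p^n_{i_n(u)}\,\nu(du)=o(\alpha_n)$ with $\alpha_n:=\sum_i(p^n_i)^2$. A second application of Doob to the $\mathcal F_n$-martingale $f^n(u):=\nu(S\cap B^n_{i_n(u)})/p^n_{i_n(u)}$ gives $f^n\to\mathbbm 1_S$ $\nu$-a.e., and Egorov then upgrades this to the uniform density estimate $f^n|_{B^n_{i_n(u)}}\ge\tfrac 12$ for $u$ in some $S'\subset S$ of nearly full $\nu$-measure and all large $n$. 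Combined with the identity $\int_S p^n_{i_n(u)}\,\nu(du)=\sum_i(p^n_i)^2 f^n|_{B^n_i}$, this produces a lower bound for $\int_S p^n_{i_n(u)}\,\nu/\alpha_n$ that is bounded away from zero, a contradiction. Upper semicontinuity of $\phi$ under weak convergence (via Portmanteau applied to the closed diagonal of $A\times A$) then identifies $K(u,\cdot)$ with $\delta_{\kappa(u)}$, yielding the measurable mark function. The hard part is precisely this final density argument: because the weighted measures $p^n_{i_n(u)}\nu/\alpha_n$ need not be uniformly absolutely continuous with respect to $\nu$, one cannot invoke dominated convergence directly, and atom-freeness of $\mu$ (supplied in the paper's application by Theorem~\ref{T5}) is what makes the ultrametric density estimate close the argument.
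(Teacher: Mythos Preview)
Your route is very different from the paper's. The paper disintegrates $\mu=\nu\otimes K$ and argues by contradiction in a few lines: if $K(u,\cdot)$ fails to be Dirac on a set $U'$ with $\nu(U')>0$, then the conditional probability $\mathbf P(\mathfrak A_1=\mathfrak A_2\mid r<\varepsilon_n,\ \mathfrak U_1,\mathfrak U_2\in U')$ is claimed to stay bounded away from $1$, and this is fed into a decomposition of $\mathbf P(\mathfrak A_1=\mathfrak A_2\mid r<\varepsilon_n)$ to contradict the hypothesis. No ball-averaged kernels, no martingale convergence, no Egorov.

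Your proof has a genuine gap at precisely the step you flag as delicate. From the weighted-average statement you correctly obtain $\int_S p^n_{i_n(u)}\,\nu(du)=o(\alpha_n)$ for $S=\{\Phi\le1-\delta\}$. Egorov then gives $S'\subset S$ with $f^n\ge\tfrac12$ on $S'$, hence $f^n_i\ge\tfrac12$ for every ball $B^n_i$ meeting $S'$. Combined with the identity $\int_S p^n\,d\nu=\sum_i(p^n_i)^2f^n_i$ this only yields
\[
\int_S p^n\,d\nu\;\ge\;\tfrac12\sum_{i:\,B^n_i\cap S'\neq\emptyset}(p^n_i)^2,
\]
and for a contradiction you need the right-hand side to be bounded below by a fixed multiple of $\alpha_n=\sum_i(p^n_i)^2$. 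What you have actually controlled is $\sum_{i:\,B^n_i\cap S'\neq\emptyset}p^n_i\ge\nu(S')>0$, a first-moment quantity; the second-moment ratio $\sum_{i\in I}(p^n_i)^2\big/\sum_i(p^n_i)^2$ can tend to zero even when $\sum_{i\in I}p^n_i$ stays bounded away from zero. Atom-freeness does not rescue this: take $U=U_1\sqcup U_2$ (far apart in the ultrametric) with non-atomic $\nu_j$, arranged so that the $\varepsilon_n$-balls in $U_1$ are systematically much finer than those in $U_2$; then $\alpha_n$ is dominated by the $U_2$-contribution, and with $S\subset U_1$ the lower bound you need never materializes. So the appeal to Theorem~\ref{T5} does not close your argument as written.
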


We proceed in three steps to prove Theorem~\ref{T6}.
\begin{itemize}
\item {\bf Step 1}: Proof of Lemma~\ref{l:char}.
\item {\bf Step 2}: An extension of Theorem~\ref{T3}.
\item {\bf Step 3}: Combination of Steps 1 and 2 gives Theorem~\ref{T6}.
\end{itemize}

\medskip

\noindent
\emph{Step 1: Proof of Lemma~\ref{l:char}}.  Since $\mu$ is a
probability measure on $U\times A$, we can write $\mu(du, da) =
(\pi_U)_\ast\mu(du) \otimes K(u, da)$ for some probability kernel $K$
from $U$ to $A$. We have to show that $K(u,da)$ only has a single atom
for $(\pi_U)_\ast \mu$-almost every $u$.

We proceed by contradiction and assume that $K(u,\cdot)$ is not
concentrated on a single atom for a set $U'\subseteq U$ of positive
$(\pi_U)_\ast \mu$-probability, i.e.\ $\mu(U'\times A)=\delta>0$.

Then, for all sequences $\varepsilon_n\downarrow 0$
\begin{align}
  \label{eq:equmark3}
  \limsup_{n\to\infty} \frac{\langle \mu^{\otimes 2}, \ind{a_1=a_2} \ind{u_1,u_2\in
      U', r(u_1, u_2)<\varepsilon_n} \rangle}{\langle \mu^{\otimes 2},
    \ind{u_1,u_2\in U', r(u_1 u_2)<\varepsilon_n} \rangle} = 1 -
  \delta' < 1.
\end{align}
Applying this to the pairs $(\mathfrak A_1, \mathfrak U_1)$ and
$(\mathfrak A_2, \mathfrak U_2)$ gives
\begin{equation}
  \label{eq:equmark2}
  \begin{aligned}
    \limsup_{n\to\infty} & \; [\mathbf P\big(\mathfrak
    A_1=\mathfrak A_2 |r_U(\mathfrak U_1, \mathfrak
    U_2)<\varepsilon_n\big) \\ & = \limsup_{\varepsilon_n\to 0}\mathbf
    P\big(\mathfrak A_1=\mathfrak A_2|r_U(\mathfrak U_1, \mathfrak
    U_2)<\varepsilon_n, \mathfrak U_1, \mathfrak U_2\in U'\big) \cdot
    \mathbf P(\mathfrak U_1, \mathfrak U_2\in U') \\ & \qquad +
    \mathbf P\big(\mathfrak A_1=\mathfrak A_2|r_U(\mathfrak U_1,
    \mathfrak U_2)<\varepsilon_n, \mathfrak U_1\notin U_2 \vee \mathfrak
    U_2\notin U'\big) \cdot \mathbf P(\mathfrak U_1\in U' \vee
    \mathfrak U_2\in U')] \\
    & \leq (1-\delta') \cdot \mathbf
    P(\mathfrak U_1, \mathfrak U_2\in U') + \mathbf P(\mathfrak U_1\notin
    U' \vee \mathfrak U_2\notin U') < 1.
  \end{aligned}
\end{equation}
Hence, \eqref{eq:81} cannot hold if $\mathbf{P} (\mathfrak
U_1,\mathfrak U_2 \in U^\prime)>0$.  This quantity however is by
assumption bounded below by $\delta^2 > 0$ and we have shown
Lemma~\ref{l:char}.

\medskip

\noindent
\emph{Step 2: Extension of Theorem~\ref{T3}}. We will show the
following for $\widehat\Psi_\lambda^{12}$ from~\eqref{eq:int2}: For
all $T<\infty$ and $\varepsilon>0$,
\begin{align}\label{eq:small1b}
  \lim_{\lambda\to\infty} \mathbf P\left(\sup_{\varepsilon\leq t\leq
      T} |(\lambda +1)\widehat \Psi^{12}_\lambda(X_t) - 1| >
    \varepsilon\right) = 0.
\end{align}

\medskip

\noindent
In order to see this, it suffices -- analogously to the proof of
Theorem~\ref{T3} -- to consider $\alpha=0$ and to start in equilibrium
$X_0 \stackrel d = X_\infty$. First, we assume that $\beta(\cdot ,
\cdot)$ is non-atomic, i.e.\ \eqref{eq:beta-non-atomic} from
Section~\ref{S:prep} holds. In this case, we proceed as in Steps~2
and~3 from Section~\ref{ss:proofT3}. These steps rely on
Lemmata~\ref{l2} and~\ref{l5}, and the second assertions of these
lemmata imply \eqref{eq:small1b}. Second, consider the general case,
i.e.\ $\beta(\cdot , \cdot)$ is not necessarily non-atomic. It is
straight-forward to construct a coupling $(X_t, \check X_t)_{t\geq 0}$
such that $(X_t)_{t\geq 0}$ and $(\check X_t)_{t\geq 0}$ use the same
resampling and mutation events, where mutant types in $(X_t)_{t\geq
  0}$ are chosen according to $\beta(\cdot ,\cdot )$, but mutant types
in $(\check X_t)_{t\geq 0}$ are chosen (at the same rate) according to
some non-atomic $\check \beta(\cdot , \cdot )$. If $X_0 = \check X_0$,
it is clear that mutant types in $(\check X_t)_{t\geq 0}$ lead to new
types in any case and thus, the inequality
\begin{align}\label{eq:ineq}
  \widehat\Psi^{12}(\check X_t) \leq \widehat \Psi^{12}(X_t)\leq
  \Psi^{12}(X_t) = \Psi^{12}(\check X_t)
\end{align}
holds for all $t\geq 0$, almost surely. Recall that we have already
shown in Theorem~\ref{T3} that~\eqref{eq:small1b} holds if
$\widehat\Psi^{12}$ is replaced by $\Psi^{12}$, and since
$\check\beta(\cdot ,\cdot )$ is non-atomic, it also holds for $(\check
X_t)_{t\geq 0}$ by our arguments above. Hence, by~\eqref{eq:ineq}, it
also holds for $\widehat\Psi^{12}(X_t)$, i.e.\ we have
shown~\eqref{eq:small1b}.

\medskip

\noindent
\emph{Step 3: Combination of Steps 1 and 2 gives Theorem~\ref{T6}}. Fix
$0<\delta<T<\infty$.  It suffices to show that $\mathcal X_t\in\mathbb
U_A^{\text{mark}}$ for all $\delta\leq t\leq T$. Using
Theorem~\ref{T3} and Step~2 above, take a sequence $\varepsilon_n\downarrow
0$ and set $\lambda_n := 1/\varepsilon_n$ such that
\begin{equation}
  \label{eq:801}
  \begin{aligned}
    &\mathbf P(\lim_{n\to\infty} \lambda_n \langle \mu_t^{\otimes
      2},e^{-\lambda_n r_{12}}\rangle = 1 \text{ for all
    }\delta\leq t\leq T) = 1,\\
    &\mathbf P(\lim_{n\to\infty} \lambda_n \langle \mu_t^{\otimes 2},
    \ind{a_1=a_2}e^{-\lambda_n r_{12}}\rangle = 1 \text{ for all
    }\delta\leq t\leq T) = 1.
  \end{aligned}
\end{equation}
We use Lemma~\ref{l:char} and the Tauberian result from
Lemma~\ref{L.reform} with its obvious extension to
$\widehat\Psi_\lambda^{12}$ to write:
\begin{equation}
  \label{eq:31j}
  \begin{aligned}
    \mathbf P(\mathcal X_t= & \overline{(U_t,r_t,\mu_t)} \in \mathbb
    U_A^{\text{mark}} \text{ for all $\delta \leq t \leq T$}) \\ &
    \geq \mathbf P\Big( \lim_{n\to\infty}\frac{\langle\mu_t^{\otimes
        2}, \ind{a_1=a_2}\ind{r(u_1,u_2)<\varepsilon_n}\rangle}{
      \langle\mu_t^{\otimes 2},
      \ind{r(u_1,u_2)<\varepsilon_n}\rangle}=1 \text{ for all
      $\delta\leq t\leq T$}\Big) \\ & \geq \mathbf P\Big(
    \lim_{n\to\infty} \lambda_n \langle\mu_t^{\otimes 2},
    \ind{a_1=a_2}\ind{r(u_1,u_2)<\varepsilon_n}\rangle = 1 \\ & \qquad
    \qquad \qquad \qquad \text{ and } \lim_{n\to\infty} \lambda_n
    \langle\mu_t^{\otimes 2}, \ind{r(u_1,u_2)<\varepsilon_n}\rangle=1
    \text{ for all $\delta\leq t\leq T$}\Big) \\ & = \mathbf P\Big(
    \lim_{n\to\infty} \lambda_n \langle \mu_t^{\otimes 2},
    \ind{a_1=a_2}e^{-\lambda_n r(u_1, u_2)}\rangle = 1 \\ & \qquad
    \qquad \qquad \qquad \text{ and } \lim_{n\to\infty} \lambda_n
    \langle \mu_t^{\otimes 2}, e^{-\lambda_n r(u_1, u_2)}\rangle =
    1\text{ for all $\delta\leq t\leq T$}\Big) \\ & = 1
  \end{aligned}
\end{equation}
by~\eqref{eq:801}. This concludes the proof of Theorem~\ref{T6}.

\subsubsection*{Acknowledgments}
This research was supported by the DFG through grants Pf-672/6-1 to
AD and PP and through grant GR 876/16-1 to AG.


\end{document}